\theoremstyle{plain}
\newtheorem{theorem}{Theorem}[section]
\newtheorem{corollary}[theorem]{Corollary}
\newtheorem{lemma}[theorem]{Lemma}
\newtheorem{proposition}[theorem]{Proposition}
\newtheorem{conjecture}[theorem]{Conjecture}
\theoremstyle{remark}
\newtheorem{remark}[theorem]{Remark}
\numberwithin{equation}{section}
\def\shrug{\texttt{\raisebox{0.75em}{\char`\_}\char`\\\char`\_\kern-0.5ex(\kern-0.25ex\raisebox{0.25ex}{\rotatebox{45}{\raisebox{-.75ex}"\kern-1.5ex\rotatebox{-90})}}\kern-0.5ex)\kern-0.5ex\char`\_/\raisebox{0.75em}{\char`\_}}}
\newcommand{\N}{\mathbb{N}}
\newcommand{\R}{\mathbb{R}}
\newcommand{\ind}[1]{\mathbbm{1}_{\left\{#1\right\}}}
\newcommand{\norme}[1]{{\left\Vert #1 \right\Vert}}
\newcommand{\crochet}[1]{{\left\langle #1 \right\rangle}}
\renewcommand{\bar}[1]{\overline{#1}}
\renewcommand{\tilde}[1]{\widetilde{#1}}
\renewcommand{\hat}[1]{\widehat{#1}}
\renewcommand{\phi}{\varphi}
\renewcommand{\epsilon}{\varepsilon}
\newcommand{\E}{\mathbf{E}}
\renewcommand{\P}{\mathbf{P}}
\newcommand{\dd}{\mathrm{d}}
\newcommand{\egaldistr}{{\overset{(d)}{=}}}
\renewcommand{\rho}{\varrho}
\title{Anomalous spreading in reducible multitype\\ branching Brownian motion}
\author{Mohamed Ali Belloum\thanks{\texttt{belloum@math.univ-paris13.fr}, Université Sorbonne Paris Nord, LAGA, CNRS, UMR 7539, F-93430, Villetaneuse, France.}
\and Bastien Mallein\thanks{\texttt{mallein@math.univ-paris13.fr}, Université Sorbonne Paris Nord, LAGA, UMR 7539, F-93430, Villetaneuse, France.}}
\date{\today}
\begin{document}

\maketitle

\begin{abstract}
We consider a two-type reducible branching Brownian motion, defined as a particle system on the real line in which particles of two types move according to independent Brownian motion and create offspring at constant rate. Particles of type $1$ can give birth to particles of types $1$ and $2$, but particles of type $2$ only give birth to descendants of type $2$. Under some specific conditions, Biggins \cite{Big12} shows that this process exhibit an anomalous spreading behaviour: the rightmost particle at time $t$ is much further than the expected position for the rightmost particle in a branching Brownian motion consisting only of particles of type $1$ or of type $2$. This anomalous spreading also has been investigated from a reaction-diffusion equation standpoint by Holzer \cite{Hol13,Hol16}. The aim of this article is to study the asymptotic behaviour of the position of the furthest particles in the two-type reducible branching Brownian motion, obtaining in particular tight estimates for the median of the maximal displacement.
\end{abstract}

\section{Introduction}
\label{sec:intro}

The standard \emph{branching Brownian motion} is a particle system on the real line that can be constructed as follows. It starts with a unique particle at time $0$, that moves according to a standard Brownian motion. After an exponential time of parameter $1$, this particle dies and is replaced by two children. The two daughter particles then start independent copies of the branching Brownian motion from their current position. For all $t \geq 0$, we write $\mathcal{N}_t$ the set of particles alive at time $t$, and for $u \in \mathcal{N}_t$ we denote by $X_u(t)$ the position at time $t$ of that particle.

The branching Brownian motion (or BBM) is strongly related to the \emph{F-KPP reaction-diffusion equation}, defined as
\begin{equation}
  \label{eqn:fkpp}
  \partial_t u = \frac{1}{2} \Delta u - u(1-u).
\end{equation}
More precisely, given a measurable function $f : \R \to [0,1]$, set for $x \in \R$ and $t \geq 0$
\[
  u_t(x) = \E \left( \prod_{u \in \mathcal{N}_t} f(X_u(t) + x) \right),
\]
then $u$ is the solution of \eqref{eqn:fkpp} with $u_0(x) = f(x)$. In particular, setting $M_t = \max_{u \in \mathcal{N}_t} X_u(t)$, we note that the tail distribution of $-M_t$ is the solution at time $t$ of \eqref{eqn:fkpp} with $u_0(z) = \ind{z < 0}$.

Thanks to this observation, Bramson \cite{Bra78} obtained an explicit formula for the asymptotic behaviour of the median of $M_t$. Precisely, he observed that setting
\begin{equation}
  \label{eqn:defMedia}
 m_t:= \sqrt{2} t - \frac{3}{2\sqrt{2}} \log t
\end{equation}
the process $(M_t- m_t, t \geq 0)$ is tight. Lalley and Sellke \cite{LaS87} refined this result and prove that $M_t - m_t$ converges in law toward a Gumbel random variable shifted by an independent copy of $\frac{1}{\sqrt{2}}\log Z_\infty$, where $Z_\infty$ is the a.s. limit as $t \to \infty$ of the derivative martingale, defined by
\[
  Z_t  := \sum_{u \in \mathcal{N}_t} (\sqrt{2} t - X_u(t)) e^{\sqrt{2} X_u(t) - 2t}.
\]

The derivative martingale is called that way due to its relationship to the derivative at its critical point of the additive martingale introduced by McKean in \cite{McK}, defined as
\[
  W_t(\theta) := \sum_{u \in \mathcal{N}_t} e^{\theta X_u(t) - t(1 + \frac{\theta^2}{2})}.
\]
It was shown in \cite{Nev} that $(W_t(\theta), t \geq 0)$ is uniformly integrable if and only if $|\theta| < \sqrt{2}$ and converges to an a.s. positive limit $W_\infty(\theta)$ in that case. Otherwise, it converges to $0$ a.s. This result has later been extended by Biggins \cite{Big} and Lyons \cite{Lyo97} to the branching random walk, which is a discrete-time analogous to the BBM.

The behaviour of the particles at the tip of branching Brownian motions was later investigated by Aidékon, Berestycki, Brunet and Shi \cite{ABBS} as well as Arguin, Bovier and Kistler \cite{ABK1,ABK2,ABK3}. They proved that the centred \emph{extremal process} of the standard BBM, defined by
\[
  \hat{\mathcal{E}}^R_t = \sum_{u \in \mathcal{N}_t} \delta_{X_u(t)-m_t}
\]
converges in distribution to a decorated Poisson point process with (random) intensity $\sqrt{2}c_\star Z_\infty e^{-\sqrt{2} x} \dd x$. More precisely, there exists a law $\mathfrak{D}$ on point measures such that writing $(D_j, j \geq 1)$ i.i.d. point measures with law $\mathfrak{D}$ and $(\xi_j, j \geq 0)$ the atoms of an independent Poisson point process with intensity $\sqrt{2}c_\star e^{-\sqrt{2}x}\dd x$, which are further independent of $Z_\infty$, and defining
\[
  \mathcal{E}_\infty = \sum_{j \geq 1} \sum_{d \in D_j} \delta_{\xi_j + d + \frac{1}{\sqrt{2}} \log Z_\infty},
\]
we have $\lim_{t \to \infty} \mathcal{E}_t = \mathcal{E}_\infty$ in law, for the topology of the vague convergence. We give more details on these results in Section~\ref{sec:facts}.

We refer to the above limit as a decorated Poisson point process, or $\text{DPPP}(\sqrt{2}c_\star Z_\infty e^{-\sqrt{2} x} \dd x,\mathfrak{D})$. Maillard \cite{Mai13} obtained a characterization of this type of point processes as satisfying a stability by superposition property. This characterization was used in \cite{Mad17} to prove a similar convergence in distribution to a DPPP for the shifted extremal process of the branching random walk. Subag and Zeitouni \cite{SubZei} studied in more details the family of shifted randomly decorated Poisson random measures with exponential intensity.

In this article, we take interest in the \emph{two-type reducible branching Brownian motion}. This is a particle system on the real line in which particles possess a type in addition with their position. Particles of type~$1$ move according to Brownian motions with diffusion coefficient $\sigma^2_1$ and branch at rate $\beta_1$ into two children of type $1$. Additionally, they give birth to particles of type $2$ at rate $\alpha$. Particles of type $2$ move according to Brownian motions with diffusion coefficient $\sigma^2_2$ and branch at rate $\beta_2$, but cannot give birth to descendants of type $1$.

In \cite{Big12}, Biggins observe that in some cases multitype reducible branching random walks exhibit an \emph{anomalous spreading} property. Precisely, the rightmost particle at time $t$ is shown to be around position $vt$, with the speed $v$ of the two-type process being larger than the speed of a branching random walk consisting only of particles of type $1$ or uniquely of particles of type $2$. Therefore, the multitype system can invade its environment at a higher speed than the one that either particles of type $1$ or particles of type $2$ would be able to sustain on their own.

Holzer \cite{Hol13,Hol16} extended the results of Biggins to this setting, by considering the associated system of F-KPP equations, describing the speed of the rightmost particle in the system in terms of $\sigma_1,\beta_1, \sigma_2$ and $\beta_2$ (the parameter $\alpha$ does not modify the speed of the two-type particle system). Our aim is to study in more details the position of the maximal displacement, in particular in the case when anomalous spreading occurs, for this two type BBM. We also take interest in the extremal process formed by the particles of type~$2$ at time $t$, and show it to converge towards a DPPP.

Recall that the reducible two-type BBM is defined by five parameters, the diffusion coefficient $\sigma_1^2$, $\sigma_2^2$ of particles of type $1$ and $2$, their branching rate $\beta_1$, $\beta_2$, and the rate $\alpha$ at which particles of type $1$ create particles of type $2$. However, up to a dilation of time and space, it is possible to modify these parameters in such a way that $\sigma_2^2 = \beta_2 = 1$. Additionally, the parameter $\alpha$ plays no role in the value of the speed of the multitype process. We can therefore describe the phase space of this process in terms of the two parameters $\sigma^2 := \sigma_1^2$ and $\beta = \beta_1$, and identify for which parameters does anomalous spreading occurs. This is done in Figure~\ref{figure}.

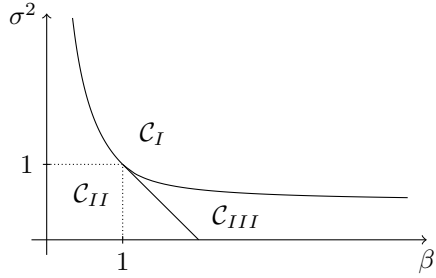
\begin{figure}[ht]
\centering
\begin{tikzpicture}[scale=1]
  \draw[->] (-0.2,0) -- (5,0) node[below] {$\beta$};
  \draw[->] (0,-0.2) -- (0,3) node[left] {$\sigma^2$};
  \draw (-0.05,1) node[left] {1} -- (0.05,1);
  \draw (1,-0.05) node[below] {1} -- (1,0.05);
  \draw [densely dotted] (0,1) -- (1,1) -- (1,0);
  \draw [domain=1:4.75,smooth,variable=\x] plot ({\x},{\x/(2*\x-1)});
  \draw [domain=1:2,smooth,variable=\x] plot ({\x},{2-\x});
  \draw [domain=0.34:1,smooth,variable=\x] plot ({\x},{1/\x});
  \draw (0.6,0.6) node{$\mathcal{C}_{II}$};
  \draw (1.4,1.4) node{$\mathcal{C}_{I}$};
  \draw (2.5,0.3) node{$\mathcal{C}_{III}$};
\end{tikzpicture}
\caption{Phase diagram of the two-type reducible BBM.}
\label{figure}
\end{figure}

We decompose the state space $(\beta,\sigma^2) \in \R_+^2$ into three regions:
\begin{align*}
  \mathcal{C}_{I} &= \left\{ (\beta,\sigma^2) : \sigma^2 > \frac{\ind{\beta \leq 1}}{\beta} + \ind{\beta > 1} \frac{\beta}{2\beta-1}\right\} \\
  \mathcal{C}_{II} &= \left\{ (\beta,\sigma^2) : \sigma^2 < \frac{\ind{\beta \leq 1}}{\beta} + \ind{\beta > 1}(2-\beta)\right\} \\
  \mathcal{C}_{III} &= \left\{ (\beta,\sigma^2) : \sigma^2 + \beta > 2 \text{ and } \sigma^2 < \frac{\beta}{2\beta -1} \right\}.
\end{align*}
If $(\beta,\sigma^2) \in \mathcal{C}_I$, the speed of the two-type reducible BBM is $\sqrt{2\beta \sigma^2}$, which is the same as particles of type $1$ alone, ignoring births of particles of type $2$. Thus in this situation, the asymptotic behaviour of the extremal process is dominated by the long-time behaviour of particles of type $1$. Conversely, if $(\beta,\sigma^2) \in \mathcal{C}_{II}$, then the speed of the process is $\sqrt{2}$, equal to the speed of a single BBM of particles of type $2$. In that situation, the asymptotic behaviour of particles of type $2$ dominates the extremal process. Finally, if $(\beta,\sigma^2) \in \mathcal{C}_{III}$, the speed of the process is larger than $\max(\sqrt{2},\sqrt{2\beta\sigma^2})$, and we will show that in this case the extremal process will be given by a mixture of the long-time asymptotic of the processes of particles of type $1$ and $2$. 

For all $t \geq 0$, we write $\mathcal{N}_t$ the set of all particles alive at time $t$, as well as $\mathcal{N}^1_t$ and $\mathcal{N}^2_t$ the set of particles of type $1$ and type $2$ respectively. We also write $X_u(t)$ for the position at time $t$ of $u \in \mathcal{N}_t$, and for all $s \leq t$, $X_u(s)$ for the position of the ancestor at time $s$ of particle $u$. If $u \in \mathcal{N}^2_t$, we denote by $T(u)$ the time at which the oldest ancestor of type $2$ of $u$ was born. In this article, we study the asymptotic behaviour of the extremal process of particles of type $2$ in this $2$-type BBM for Lebesgue-almost every values of $\sigma^2$ and $\beta$ (and for $\alpha \in (0,\infty)$).

We divide the main result of our article into three theorems, one for each area the pair $(\beta,\sigma^2)$ belongs to. We begin with the asymptotic behaviour of extremal particles when $(\beta,\sigma^2) \in \mathcal{C}_I$, in which case the extremal point measure is similar to the one observed in a branching Brownian motion of particles of type $1$.
\begin{theorem}[Domination of particles of type $1$]
\label{thm:mainI}
If $(\beta,\sigma^2) \in \mathcal{C}_I$, then there exist a constant $c_{(I)}>0$ and a point measure distribution $\mathfrak{D}^{(I)}$ such that setting  $m^{(I)}_t:= \sqrt{2\sigma^2\beta} t - \frac{3}{2\sqrt{2\beta/\sigma^2}} \log t$ we have
\[
  \lim_{t \to \infty} \sum_{u \in \mathcal{N}_t^2} \delta_{X_u(t) - m^{(I)}_t} = \mathcal{E}^{(I)}_\infty \quad \text{ in law},
\]
for the topology of the vague convergence, where $\mathcal{E}^{(I)}_\infty$ is a DPPP($\sqrt{2 \beta /\sigma^2}c_{(I)} Z^{(1)}_\infty e^{-\sqrt{2\beta/\sigma^2} x}\dd x, \mathfrak{D}^{(I)}$), where
\[
  Z^{(1)}_\infty := \lim_{t \to \infty} \sum_{u \in \mathcal{N}^1_t} (\sqrt{2 \sigma^2 \beta} t - X_u(t)) e^{\sqrt{2\beta/\sigma^2} X_u(t) - 2\beta t} \quad \text{a.s.}
\]
Additionally, we have $\displaystyle \lim_{t \to \infty} \P(M_t \leq m^{(I)}_t + x) = \E\left( e^{-c_{(I)} Z^{(1)}_\infty e^{-\sqrt{2 \beta /\sigma^2} x}} \right)$ for all $x \in \R$.
\end{theorem}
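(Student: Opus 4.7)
The plan is to exploit the fact that in $\mathcal{C}_I$ one has $\sqrt{2\sigma^2\beta}>\sqrt{2}$, so the type-1 BBM on its own advances at a strictly faster speed than any type-2 subtree can. Hence a type-2 particle reaching $m^{(I)}_t+O(1)$ must descend from a type-1 particle near the type-1 frontier, and must have been of type $2$ for only $O(1)$ units of time. The starting point is the Poisson-seed decomposition: conditionally on the entire type-1 BBM, the type-2 particles form an independent family of standard BBM clusters, each rooted at a ``seed'' $(u,\tau)$ where $u\in\mathcal{N}^1_\tau$ and $\tau$ is an arrival time of an independent rate-$\alpha$ Poisson process running along the trajectory of $u$.

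I would first use the classical results recalled in Section~\ref{sec:facts}: since the type-1 particles form an autonomous BBM of variance $\sigma^2$ and branching rate $\beta$ (the births of type-2 children do not feed back into the type-1 dynamics), its centred extremal process converges to a $\mathrm{DPPP}\bigl(\sqrt{2\beta/\sigma^2}\,c^{(1)} Z_\infty^{(1)} e^{-\sqrt{2\beta/\sigma^2}x}\dd x,\mathfrak{D}^{(1)}\bigr)$, where $Z_\infty^{(1)}$ is the derivative martingale and $\mathfrak{D}^{(1)}$ the standard BBM cluster decoration. Next, fix a large parameter $T$, and split the type-2 extremal process according to whether the generating seed $(u,\tau)$ satisfies $\tau\leq t-T$ or $\tau\in(t-T,t]$. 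A first-moment estimate handles the first part: such a seed produces a type-2 subtree of length $\geq T$, hence reaching $m^{(I)}_t+O(1)$ forces $X_u(\tau)\geq m^{(I)}_t-\sqrt{2}(t-\tau)+O(1)$, i.e.\ the type-1 ancestor to sit above the type-1 frontier $\sqrt{2\sigma^2\beta}\tau$ by a height of order $(\sqrt{2\sigma^2\beta}-\sqrt{2})(t-\tau)$. Combining the many-to-one formula for type-1 with a Gaussian ballot/barrier estimate for its trajectory and the right-tail bound $\P(M_s\geq\sqrt{2}s+y)\lesssim(1+y)e^{-\sqrt{2}y}$ for the standard BBM maximum, the expected number of such particles reaching $m^{(I)}_t+[x,\infty)$ decays exponentially in $T$, uniformly in $t\geq T$.

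For seeds in $(t-T,t]$, I would group the type-2 descendants by their most recent type-1 ancestor $v$ at time $t-T$. Conditionally on the type-1 configuration at time $t-T$, the clusters generated by distinct $v$'s are independent, each distributed as the type-2 point measure at time $T$ produced by an independent type-1 sub-BBM of depth $T$ rooted at $v$ (with its own rate-$\alpha$ seed process and independent type-2 subtrees). By the same ballot estimate, only $O(1)$-depth type-2 subtrees in this sub-BBM contribute near the tip, so the cluster law, shifted so that its extremal atom sits at the origin, admits a weak limit $\mathfrak{D}^{(I)}$ as $T\to\infty$. Combining this with the convergence of Step~1 and invoking the Laplace-functional characterization of decorated Poisson point processes (see \cite{Mai13,SubZei}), the type-2 extremal process converges to a DPPP with the announced intensity (the constant $c_{(I)}$ absorbing the ``mass at the tip'' contribution of $\mathfrak{D}^{(I)}$) and decoration $\mathfrak{D}^{(I)}$. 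The formula for $\P(M_t\leq m^{(I)}_t+x)$ then follows by specializing the DPPP Laplace functional to $\phi=+\infty\cdot\ind{\cdot>x}$.

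The hard part is the truncation estimate of the second paragraph, i.e.\ showing that seeds $(u,\tau)$ with $\tau\leq t-T$ contribute negligibly to the tip. It couples two independent atypical events: the type-1 ancestor must cross an atypically high barrier by time $\tau$, and the type-2 subtree must realise an atypically large displacement of order $(\sqrt{2\sigma^2\beta}-\sqrt{2})(t-\tau)$ over the long period $t-\tau$. The resulting bound carries exponential decay in the gap $\sqrt{2\sigma^2\beta}-\sqrt{2}>0$, and integrating against the density of possible seed times $\tau\in[0,t-T]$ preserves exponential decay in $T$. This is the only place where the structural hypothesis $(\beta,\sigma^2)\in\mathcal{C}_I$ (via the speed gap) is genuinely used.
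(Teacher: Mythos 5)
Your overall architecture coincides with the paper's: split the type--$2$ population according to whether the last type--$1$ ancestor was born before or after $t-R$, kill the first group by a first-moment bound (Lemmas~\ref{lem:computation} and~\ref{lem:originI}), treat the second group as conditionally i.i.d.\ clusters attached to the type--$1$ configuration at time $t-R$ (Lemma~\ref{lem:convI}), and identify the limit as a DPPP. However, your truncation estimate has a genuine gap. You control the contribution of a seed $(u,\tau)$ with $\tau\le t-T$ by the critical tail bound $\P(M_s\ge \sqrt{2}s+y)\lesssim (1+y)e^{-\sqrt{2}y}$ together with the barrier keeping type--$1$ particles below $vs+K$, and you claim the resulting decay comes from the speed gap $v=\sqrt{2\sigma^2\beta}>\sqrt 2$ alone. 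This cannot work throughout $\mathcal{C}_I$: whenever $\sigma^2<\beta$ (in particular on all of $\mathcal{C}_I\cap\{\sigma^2\le 1\}$), the dominant seeds sit at depth of order $t-\tau$ \emph{below} the type--$1$ frontier, and their type--$2$ subtrees must travel at speed strictly larger than $\sqrt 2$; for such $y$ the bound $(1+y)e^{-\sqrt2 y}$ misses the Gaussian correction $e^{-y^2/2(t-\tau)}$, and the integral over seed times then blows up like $e^{(\sqrt\beta-\sigma)^2 t}$. One must instead use the large-deviation tail of $M_s$ (equivalently, an optimized Chernoff bound as in the paper's proof of Lemma~\ref{lem:computation}, with tilt $\lambda=\theta+\delta$ rather than $\lambda=\sqrt2$), and the condition that makes the optimized exponent negative is $1-\theta v+\theta^2/2<0$, i.e.\ $\sigma^2>\beta/(2\beta-1)$ --- the full $\mathcal{C}_I$ condition, strictly stronger than $v>\sqrt2$. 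This is not a cosmetic point: $\{\sigma^2\beta>1\}\cap\mathcal{C}_{III}$ is nonempty, and there your argument would ``prove'' a statement contradicting Theorem~\ref{thm:mainIII}.

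A second, smaller issue is your final step: you assert that the law of a depth-$T$ cluster, recentred at its rightmost atom, converges weakly as $T\to\infty$ to a decoration law $\mathfrak{D}^{(I)}$. That is a nontrivial limit theorem of ABK/ABBS type, and it is precisely what the authors state they were unable to establish (see the discussion in Section~\ref{subsec:stateoftheart}). The paper circumvents it: it proves convergence of $\crochet{\hat{\mathcal{E}}_t,\phi}$ by monotonicity in $R$ plus tightness (via Lemma~\ref{lem:computation} with $R=0$), and then identifies the limit as a DPPP with intensity proportional to $Z^{(1)}_\infty e^{-\theta x}\dd x$ through the superposition identity \eqref{eqn:branchingProp} and Maillard's characterization \cite[Corollary~3.2]{Mai13}, which is why $c_{(I)}$ and $\mathfrak{D}^{(I)}$ are only implicit in the statement. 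If you want to keep your route, you must either prove the convergence of the recentred cluster law or replace that step by the superposition argument.
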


We underscore that in this theorem, the values of $c_{(I)}$ and $\mathfrak{D}^{(I)}$ are obtained implicitly, and depend on the parameters $\alpha$, $\beta$ and $\sigma^2$ of the multitype branching Brownian motion. The identification of the law of the extremal point measure $\mathcal{E}^{(I)}$ is based on the fact that it satisfies a stability by superposition property, and using Maillard's characterization \cite{Mai13}.

If $(\beta,\sigma^2) \in \mathcal{C}_{II}$, we show that the extremal process of the two-type BBM is similar to the extremal process of a single BBM of particles of type $2$, up to a random shift whose law depend on the behaviour of particles of type $1$.
\begin{theorem}[Domination of particles of type $2$]
\label{thm:mainII}
If $(\beta,\sigma^2) \in \mathcal{C}_{II}$, then writing $c_\star > 0$ the prefactor of the intensity measure in the extremal process of the standard BBM and $\mathfrak{D}$ the law of its decoration, setting $m^{(II)}_t:= m_t = \sqrt{2} t - \frac{3}{2\sqrt{2}} \log t$, we have
\[
  \lim_{t \to \infty} \sum_{u \in \mathcal{N}_t^2} \delta_{X_u(t) - m^{(II)}_t} = \mathcal{E}^{(II)}_\infty \quad \text{ in law},
\]
for the topology of the vague convergence, where $\mathcal{E}^{(II)}_\infty$ is a DPPP($\sqrt{2} c_\star \bar{Z}_\infty e^{-\sqrt{2} x}\dd x, \mathfrak{D}$) and $\bar{Z}_\infty$ is defined in Lemma~\ref{lem:limitII}.  Additionally, for all $x \in \R$ we have $\displaystyle \lim_{t \to \infty} \P(M_t \leq m^{(II)}_t + x) = \E\left( e^{-c_\star \bar{Z}_\infty e^{-\sqrt{2} x}} \right).$
\end{theorem}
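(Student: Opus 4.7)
The plan is to exploit a branching decomposition along type-2 lineages: every particle in $\mathcal{N}_t^2$ descends from a unique earliest type-2 ancestor $v$, born from a type-1 parent at time $T(v)$ at position $X_v(T(v))$, and, conditionally on the full type-1 process, the subtrees rooted at distinct $v$'s are independent standard BBMs. Writing $\mathcal{E}^{(v)}_t$ for the associated sub-extremal process recentred by $m^{(II)}_t$, this yields the superposition
\[
  \sum_{u \in \mathcal{N}_t^2} \delta_{X_u(t) - m^{(II)}_t} = \sum_v \mathcal{E}^{(v)}_t,
\]
which reduces the problem to analysing a (random) superposition of time-delayed standard BBMs launched along the space-time trajectories of type-1 particles.

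For a single subtree of age $\tau := t - T(v)$ started at $X_v(T(v))$, the convergence of the recentred extremal process of the standard BBM gives a DPPP$(\sqrt{2}c_\star Z^{(v)}_\infty e^{-\sqrt{2} y}\dd y,\mathfrak{D})$ when the subfamily is recentred by $m_\tau$, where $Z^{(v)}_\infty$ is the derivative martingale limit of that sub-BBM. Since $m_{t-s} - m_t \to -\sqrt{2}\,s$ as $t \to \infty$ for each fixed $s$, recentring instead by $m_t$ multiplies the intensity by $e^{\sqrt{2}X_v(T(v)) - 2T(v)}$. Using the fact that the superposition of independent DPPPs with a common decoration law $\mathfrak{D}$ is itself a DPPP with summed intensity and the same decoration, the formal limit is DPPP$(\sqrt{2}c_\star \bar{Z}_\infty e^{-\sqrt{2}x}\dd x,\mathfrak{D})$ with
\[
  \bar{Z}_\infty = \sum_v Z^{(v)}_\infty e^{\sqrt{2}X_v(T(v)) - 2T(v)},
\]
which I expect to be the martingale limit provided in Lemma~\ref{lem:limitII}.

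To make the superposition argument rigorous I would truncate at a time $A$: handle the finitely many type-2 roots born before $A$ jointly via the sub-BBM convergence and the joint limit of the corresponding $Z^{(v)}_\infty$'s, and then let $A \to \infty$. The main obstacle is controlling the contribution of roots born after $A$: one must show that the probability that some descendant of such a root falls above $m^{(II)}_t + x$ tends to zero as $A \to \infty$, uniformly in $t$. This is where the hypothesis $(\beta,\sigma^2) \in \mathcal{C}_{II}$ enters decisively, since it forces the top type-1 particles to travel strictly slower than $\sqrt{2}\,t$ (either via $\sqrt{2\beta\sigma^2} < \sqrt{2}$ when $\beta \le 1$, or via the linearised velocity condition $\sigma^2 + \beta < 2$ when $\beta > 1$). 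A Many-to-One computation on the type-1 tree combined with the Gaussian upper tail for the maximum of a standard BBM then yields, under $\mathcal{C}_{II}$, an exponentially small bound in $A$ for the tail contribution, and simultaneously provides the a.s.\ finiteness of $\bar{Z}_\infty$. Finally, the convergence in law of $M_t - m^{(II)}_t$ follows from applying the void probability of the limiting DPPP to the interval $(x,\infty)$, using that the decoration law $\mathfrak{D}$ is supported on $(-\infty,0]$, which yields $\E[\exp(-c_\star \bar{Z}_\infty e^{-\sqrt{2}x})]$.
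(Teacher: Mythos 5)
Your plan is essentially the paper's proof: the paper likewise decomposes along the set $\mathcal{B}$ of earliest type-$2$ ancestors, truncates at birth times $T(u)\leq R$, shows via a first-moment/many-to-one bound under $\mathcal{C}_{II}$ that later-born roots contribute nothing near $m^{(II)}_t$, passes to the limit in the truncated superposition of standard-BBM extremal processes, and identifies $\bar{Z}_\infty=\sum_{u\in\mathcal{B}}Z^{(u)}_\infty$ exactly as you describe. Two details your sketch leaves implicit but which the paper needs: when $\sigma^2>1$ the unrestricted first moment diverges and one must first confine type-$1$ particles below the barrier $s\mapsto \sqrt{2\beta\sigma^2}\,s+K$ before applying Markov's inequality; and since $\E(Z_\infty)=\infty$, the a.s.\ finiteness of $\bar{Z}_\infty$ cannot come from a plain expectation bound but requires a Kolmogorov three-series type argument using the tail $\P(Z_\infty\geq x)\sim\sqrt{2}/x$.
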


We finally take interest in the case $(\beta,\sigma^2) \in \mathcal{C}_{III}$, in this situation the BBM exhibits an anomalous spreading behaviour. The extremal process contains only particles of type $2$, but particles travel at greater speed that would have been observed in a BBM of particles of type $1$ or of type $2$.
\begin{theorem}[Anomalous spreading]
\label{thm:mainIII}
If $(\beta,\sigma^2) \in \mathcal{C}_{III}$, then setting
\[
  m^{(III)}_t= \frac{\sigma^2 - \beta}{\sqrt{2(1-\sigma^2)(\beta- 1)}} t \quad \text{and} \quad \theta = \sqrt{2 \frac{\beta-1}{1-\sigma^2}},
\]
we have
\[ 
  \lim_{t \to \infty} \sum_{u \in \mathcal{N}_t^2} \delta_{X_u(t) - m^{(III)}_t} = \mathcal{E}^{(III)}_\infty \quad \text{ in law},
\]
for the topology of the vague convergence, where $\mathcal{E}_\infty$ is a DPPP($\theta c_{(III)} W_\infty(\theta) e^{-\theta x} \dd x, \mathfrak{D}^{(III)}$) and
\begin{itemize}
  \item $W_\infty(\theta) = \lim_{t \to \infty} \sum_{u \in \mathcal{N}^1_t} e^{\theta X_u(t) - t (\beta + \theta^2\sigma^2/2)}$ is the a.s. limit of an additive martingale of the BBM of particles of type $1$ with parameter~$\theta$,
  \item $c_{(III)} = \frac{\alpha C(\theta)}{2(\beta - 1)}$ with the function $C$ being defined in \eqref{eqn:largeDevMax},
  \item $\mathfrak{D}^{(III)}$ is the law of the point measure $\mathcal{D}^\theta$ defined in \eqref{eqn:defineSupercriticalDecoration}.
\end{itemize}
Additionally, for all $x \in \R$ we have $\displaystyle \lim_{t \to \infty} \P(M_t \leq m^{(III)}_t + x) = \E\left( e^{-c_{(III)} W_\infty(\theta) e^{-\theta x}} \right).$
\end{theorem}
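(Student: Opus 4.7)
}
The natural strategy exploits the reducible branching structure. Conditionally on the type-$1$ BBM, the space-time positions of the type-$2$ progenitors --- pairs $(w,s)$ such that the type-$1$ particle $w$ spawns a type-$2$ child at time $s$ --- form a Poisson point process of intensity $\alpha\,\mathrm{d}s$ along each type-$1$ trajectory, and each progenitor initiates an independent type-$2$ BBM run until time $t$. The plan is to compute the Laplace functional of $\mathcal{E}^{(III)}_t=\sum_{u\in\mathcal{N}^2_t}\delta_{X_u(t)-m^{(III)}_t}$ by conditioning on the $\sigma$-field $\mathcal{F}^{(1)}$ of the type-$1$ BBM and passing to the limit, using sharp tail estimates for the type-$2$ BBM at the supercritical tilt $\theta>\sqrt{2}$ together with convergence of an additive martingale of the type-$1$ BBM.

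For $\phi\ge 0$ continuous with compact support, set $\Psi_\phi(r,y):=1-\E[\exp(-\sum_{u\in\mathcal{N}^{(2)}_r}\phi(X_u(r)-y))]$, where $\mathcal{N}^{(2)}_r$ denotes the population of a standalone type-$2$ BBM at time $r$. The Poissonian structure of the progenitors yields
\[
\E\!\left[e^{-\langle\phi,\mathcal{E}^{(III)}_t\rangle}\,\Big|\,\mathcal{F}^{(1)}\right]=\exp\!\left(-\alpha\int_0^t\sum_{w\in\mathcal{N}^1_s}\Psi_\phi\!\bigl(t-s,\,m^{(III)}_t-X_w(s)\bigr)\,\mathrm{d}s\right).
\]
Since $\theta>\sqrt{2}$ throughout $\mathcal{C}_{III}$, the arguments $m^{(III)}_t-X_w(s)$ lie in the supercritical large-deviation regime of the type-$2$ maximum. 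Using the sharp tail estimate~\eqref{eqn:largeDevMax} together with the convergence of the extremal process of BBM at a supercritical tilt (which identifies the decoration $\mathcal{D}^\theta=\mathfrak{D}^{(III)}$ and the constant $C(\theta)$), I would establish a uniform asymptotic of the form
\[
\Psi_\phi(r,y)\sim K_\phi(\theta)\,e^{-\theta y-r(\theta^2/2-1)}\qquad\text{as }r\to\infty\text{ with }y-\theta r\text{ in a suitable range},
\]
where $K_\phi(\theta)$ is a $\phi$-dependent constant reducing to $C(\theta)$ in the half-line specialisation.

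Plugging the asymptotic into the conditional exponent and using the many-to-one identity $\sum_{w\in\mathcal{N}^1_s}e^{\theta X_w(s)}=e^{s(\beta+\theta^2\sigma^2/2)}W_s(\theta)$, the expression becomes
\[
-\alpha K_\phi(\theta)\int_0^t\exp\!\bigl(s(\beta+\theta^2\sigma^2/2)-(t-s)(\theta^2/2-1)-\theta m^{(III)}_t\bigr)\,W_s(\theta)\,\mathrm{d}s.
\]
A direct calculation using the identity $\theta^2(1-\sigma^2)/2=\beta-1$ and the explicit value of $m^{(III)}_t$ shows that the exponent inside the integral collapses to $-\kappa(t-s)$ for the positive constant $\kappa=2(\beta-1)/(1-\sigma^2)$. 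Throughout $\mathcal{C}_{III}$ one checks by elementary algebra that $\theta<\theta_c^{(1)}=\sqrt{2\beta/\sigma^2}$, so that $W_s(\theta)$ is a uniformly integrable martingale of the type-$1$ BBM and $W_s(\theta)\to W_\infty(\theta)$ almost surely and in $L^1$. The dominated convergence theorem then gives
\[
\alpha\int_0^t\sum_{w\in\mathcal{N}^1_s}\Psi_\phi\!\bigl(t-s,m^{(III)}_t-X_w(s)\bigr)\,\mathrm{d}s\ \xrightarrow[t\to\infty]{}\ \frac{\alpha K_\phi(\theta)}{\kappa}\,W_\infty(\theta),
\]
from which, after identification of constants and invoking Maillard's characterisation~\cite{Mai13} to recognise the limiting Laplace functional as that of a $\mathrm{DPPP}(\theta c_{(III)}W_\infty(\theta)e^{-\theta x}\mathrm{d}x,\mathfrak{D}^{(III)})$, both convergence statements in the theorem follow.

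\emph{Principal obstacle.} The delicate step is establishing the asymptotic for $\Psi_\phi(r,y)$ \emph{uniformly} over the range of $(r,y)$ actually contributing to the integral, and controlling the boundary regimes $s\approx 0$ and $s\approx t$ (which must be shown negligible) as well as the contribution of type-$1$ particles $w$ whose position deviates from the tilted mean $\theta\sigma^2 s$. A natural route is a spine change of measure on the type-$2$ BBM using its additive martingale at tilt $\theta$: under the tilted law the spine becomes a Brownian motion with drift $\theta$ and $\Psi_\phi$ reduces to a ballot-type functional amenable to the same techniques used for the classical extremal process. The identification of the limiting decoration as $\mathcal{D}^\theta$ is then a companion convergence statement for the BBM extremal process at a supercritical tilt, and I expect it to be the main new technical input of the proof.
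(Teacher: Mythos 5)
Your overall architecture coincides with the paper's: condition on the type-$1$ process, use the Poissonian structure of the type-$2$ birth events (Corollary~\ref{cor:poissonSum}), feed in a uniform estimate for the extremal process of a standalone type-$2$ BBM at the supercritical tilt $\theta$ (Lemma~\ref{lem:jointLargeDev}), and convert the resulting sum over $\mathcal{N}^1_s$ into the additive martingale at parameter $\theta$. The algebraic identities you invoke ($\beta+\theta^2\sigma^2/2=1+\theta^2/2$, $\sqrt{2}<\theta<\sqrt{2\beta/\sigma^2}$ on $\mathcal{C}_{III}$, hence uniform integrability of $W(\theta)$ for the type-$1$ BBM) are correct and all appear in the paper.

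The gap is in the asymptotic you posit for $\Psi_\phi(r,y)$. The correct estimate (see \eqref{eqn:largeDevMax} and Lemma~\ref{lem:jointLargeDev}) carries a prefactor $(2\pi r)^{-1/2}$ and a Gaussian correction $e^{-w^2/(2r)}$ in the offset $w=y-\theta r$, and is valid only for $w=O(r^{1/2})$; both are absent from your formula, and neither is a lower-order correction. The Gaussian factor is precisely what localizes the type-$1\to 2$ branching at times $pt+O(t^{1/2})$ and positions $apt+O(t^{1/2})$: for $w\in\mathcal{N}^1_s$ typical under the $\theta$-tilt one has $X_w(s)\approx as$, so the offset is $\approx(b-a)(s-pt)$ and contributions with $|s-pt|\gg t^{1/2}$ are suppressed. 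Dropping these factors, your exponent collapses to $-\kappa(t-s)$ and your time integral concentrates at $s\approx t$ --- a regime where the large-deviation asymptotic does not apply at all, since a type-$1$ particle at time $s\approx t$ sits below $\sqrt{2\beta\sigma^2}\,t<vt$ and its offspring would have to cover a distance of order $t$ in time $o(t)$ --- which contradicts your own (correct) remark that $s\approx t$ must be shown negligible. The shortcut also misses the constant: with the identification of $K_\phi(\theta)$ suggested by Lemma~\ref{lem:jointLargeDev}, your answer $\alpha K_\phi(\theta)/\kappa$ with $\kappa=2(\beta-1)/(1-\sigma^2)=\theta^2$ differs from the paper's $c_{(III)}$ by a factor $\theta/(1-\sigma^2)$; the correct normalization $1/(b-a)=1/(\theta(1-\sigma^2))$ arises as a Gaussian convolution between the correction factor $e^{-w^2/(2(1-p)t)}$ and the CLT fluctuations of the tilted type-$1$ cloud around $as$, for which the almost-sure convergence $W_s(\theta)\to W_\infty(\theta)$ is not sufficient: one needs the local-CLT strengthening of Lemma~\ref{lem:cltExpanded}. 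Finally, before any asymptotic can be applied one needs a priori first-moment localization (the paper's Lemmas~\ref{lem:firstStepIII} and~\ref{lem:originIII}) to discard the complement of the window $|T(u)-pt|\vee|X_u(T(u))-apt|\le Rt^{1/2}$; you flag this as the principal obstacle but do not supply an argument for it.
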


\begin{remark}
Contrarily to what happens in Theorems~\ref{thm:mainI} and~\ref{thm:mainII}, the extremal process obtained in Theorem~\ref{thm:mainIII} is not shifted by a random variable associated to a derivative martingale, but by an additive martingale of the BBM. Additionally, it is worth noting that contrarily to the median of the maximal displacements in domains $\mathcal{C}_I$ and $\mathcal{C}_{II}$, when anomalous spreading occurs there is no logarithmic correction in the median of the maximal displacement.
\end{remark}

\begin{remark}
\label{rem:15}
Observe that in Theorems~\ref{thm:mainI}--\ref{thm:mainIII}, we obtain the convergence in law for the topology of the vague convergence of extremal processes to DPPPs as well as the convergence in law of their maximum to the maximum of this DPPP. These two convergences can be synthesized into the joint convergence of the extremal process together with its maximum, which is equivalent to the convergence of $\crochet{\mathcal{E}_t,\phi}$ for all continuous bounded functions $\phi$ with bounded support on the left (see e.g. \cite[Lemma~4.4]{BBCM19}).
\end{remark}

The rest of the article is organized as follows. We discuss our results in the next section, by putting them in the context of the state of the art for single type and multitype branching processes, and for coupled reaction-diffusion equations. In Section~\ref{sec:facts}, we introduce part of the notation and results on branching Brownian motions that will be needed in our proofs, in particular for the definition of the decorations laws of the extremal process. We introduce in Section~\ref{sec:mto} a multitype version of the celebrated many-to-one lemma. Finally, we prove Theorem~\ref{thm:mainII} in Section~\ref{sec:proofII}, Theorem~\ref{thm:mainI} in Section~\ref{sec:proofI} and Theorem~\ref{thm:mainIII} in Section~\ref{sec:anomalous}.

\section{Discussion of our main result}
\label{sec:discuss}

We compare our main results for the asymptotic behaviour of the two-type reducible branching Brownian motion to the pre-existing literature. We begin by introducing the optimization problem associated to the computation of the speed of the rightmost particle in this process. Loosely speaking, this optimization problem is related to the ``choice'' of the time between $0$ and $t$ and position at which the ancestral lineage leading to one of the rightmost positions switches from type~$1$ to type~$2$. The optimization problem was introduced by Biggins \cite{Big12} for the computation of the speed of multitype reducible branching random walks. It allows us to describe the heuristics behind the main theorems.

We then compare Theorem~\ref{thm:mainIII} to the results obtained on the extremal process of time-inhomogeneous branching Brownian motions, and in particular with the results of Bovier and Hartung \cite{BoH14}. In Section~\ref{subsec:pde}, we apply our results to the work of Holzer \cite{Hol13,Hol16} on coupled F-KPP equations. We end this section with the discussion of further questions of interest for multitype reducible BBMs and some conjectures and open questions.

\subsection{Associated optimization problem and heuristic}
\label{subsec:stateoftheart}

Despite the fact that spatial multitype branching processes have a long history, the study of the asymptotic behaviour of their largest displacement has not been considered until recently. As previously mentioned, Biggins \cite{Big12} computed the speed of multitype reducible branching random walks. This process is a discrete-time particles system in which each particle gives birth to offspring in an independent fashion around its position, with a reproduction law that depends on its type, under the assumption that the Markov chain associated to the type of a typical individual in the process is reducible. Ren and Yang \cite{ReY14} then considered the asymptotic behaviour of the maximal displacement in an \emph{irreducible} multitype BBM.

In \cite{BigBO}, Biggins gives an explicit description of the speed of a reducible two-type branching random walk as the solution of an optimization problem. In the context of the two-type BBM we consider, the optimization problem can be expressed as such:
\begin{equation}
  \label{eqn:optimizationProblem}
  v = \max\left\{ p a + (1-p) b : p \in [0,1],\  p \left(\frac{a^2}{2\sigma^2}-\beta\right) \leq 0,\  p \left(\frac{a^2}{2\sigma^2}-\beta\right) + (1-p) \left(\frac{b^2}{2} - 1\right) \leq 0 \right\}.
\end{equation}
This optimization problem can be understood as follows. It is well-known that if $a < \sqrt{2\sigma^2 \beta}$ and $b \geq \sqrt{2}$, there are with high probability around $e^{pt (\beta - a^2/2\sigma^2) +o(t)}$ particles of type~$1$ at time $pt$ to the right of position $pta$, and a typical particle of type~$2$ has probability $e^{(1-p)t(1 - b^2/2) +o(t)}$ of having a descendant to the right of position $(1-p)bt$ at time $(1-p)t$. Therefore, for all $(p,a,b)$ such that
\[
   p \in [0,1],\  p \left(\frac{a^2}{2\sigma^2}-\beta\right) \leq 0,\  p \left(\frac{a^2}{2\sigma^2}-\beta\right) + (1-p) \left(\frac{b^2}{2} - 1\right) \leq 0 ,
\]
by law of large numbers there should be with high probability particles of type~$2$ to the right of the position $t(pa + (1-p)b)$ at time $t$.

If we write $(p^*,a^*,b^*)$ the triplet optimizing the problem \eqref{eqn:optimizationProblem}, it follows from classical optimization under constraints computations that:
\begin{enumerate}
  \item If $(\beta,\sigma^2) \in \mathcal{C}_I$, then $p^*=1$ and $a^* = \sqrt{2\beta \sigma^2}$, which is in accordance with Theorem~\ref{thm:mainI}, as the extremal particle system is dominated by the behaviour of particles of type~$1$, and particles of type~$2$ contributing to the extremal process are close relatives descendants of a parent of type~$1$;
  \item If $(\beta,\sigma^2) \in \mathcal{C}_{II}$, then $p^*=0$ and $b^* = \sqrt{2}$, which is in accordance with Theorem~\ref{thm:mainII}, as the extremal particle system is dominated by the behaviour of particles of type~$2$, that are born at time $o(t)$ from particles of type~$1$;
  \item If $(\beta,\sigma^2) \in \mathcal{C}_{III}$, then
  \[
    p^* = \frac{\sigma^2 + \beta - 2}{2(1-\sigma^2)(\beta-1)} , \quad a^* = \sigma^2 \sqrt{2 \frac{\beta-1}{1-\sigma^2}} \ \text{  and  } \ b^* = \sqrt{2 \frac{\beta-1}{1-\sigma^2}}.
  \]
  We then have $v = \frac{\beta - \sigma^2}{\sqrt{2(1-\sigma^2)(\beta- 1)}}$, which corresponds to the main result of Theorem~\ref{thm:mainIII}. Additionally, the Lagrange multiplier associated to this optimization problem is $\theta = \sqrt{2 \frac{\beta-1}{1-\sigma^2}} = b = \frac{a}{\sigma^2}$.
\end{enumerate}

In particular, the optimization problem associated to the case $(\beta,\sigma^2) \in \mathcal{C}_{III}$ can be related to the following interpretation of Theorem~\ref{thm:mainIII}. The extremal process at time $t$ is obtained as the superposition of the extremal processes of an exponentially large number of BBMs of type~$2$, starting around time $tp^*$ and position $t p^* a^*$. The number of these BBMs is directly related to the number of particles of type~$1$ that displace at speed $a^*$, which is known to be proportional to $W_\infty(\theta) e^{t(1 - (a^*)^2/2 \sigma^2)}$. It explains the apparition of this martingale in Theorem~\ref{thm:mainIII}, whereas the decoration distribution $\mathfrak{D}^{(III)}$ is the extremal process of a BBM of type~$2$ conditionally on moving at the speed $b^* > v$.

For Theorem~\ref{thm:mainI}, a similar description can be made. We expect the asymptotic behaviour to be driven by the behaviour of particles of type~$1$, therefore the extremal process of particles of type~$2$ should be obtained as a decoration of the extremal process of particles of type~$1$. However, as we were not able to use result of convergence of extremal processes together with a description of the behaviour of particles at times $t - O(1)$, we do not obtain an explicit value for $c_{(I)}$ and an explicit description of the law $\mathfrak{D}^{(I)}$. However, with similar techniques as the ones used in \cite{ABBS} or \cite{ABK3}, such explicit constructions should be available.

Finally, in the case covered by Theorem~\ref{thm:mainII}, the above optimization problem indicates that the extremal process of the multitype reducible BBM should be obtained as the superposition of a finite number of BBMs of particles of type~$2$, descending from the first few particles of type~$2$ to be born. The random variable $\bar{Z}$ is then constructed as the weighted sum of i.i.d. copies of the derivative martingale of a standard BBM and the decoration is the same as the decoration of the original BBM.

To prove Theorems~\ref{thm:mainI}--\ref{thm:mainIII}, we show that the above heuristic holds, i.e. that with high probability the set of particles contributing to the extremal processes are the one we identified in each case. We then use previously known results of branching Brownian motions to compute the Laplace transforms of the extremal point measures we are interested in.

The solution of the optimization problem \eqref{eqn:optimizationProblem} is also solution of  $v = \sup\{a \in \R : g(a) \leq 0\}$, where $g$ is the largest convex function such that
\[
 \forall |x| \leq \sqrt{2\beta \sigma^2},\  g(x) \leq \left(\frac{x^2}{2\sigma^2} - \beta\right) \quad \text{ and } \quad \forall y \in \R, \ g(y) \leq \frac{y^2}{2} - 1,
\]
see \cite{BigBO} for precisions. The function $x \mapsto \frac{x^2}{2\sigma^2} - \beta$ is known as the rate function for particles of type~$1$, and $y \mapsto \frac{y^2}{2} - 1$ is the rate function for particles of type~$2$.

We then observe that the three cases described above are the following:
\begin{enumerate}
  \item If $(\beta,\sigma^2) \in \mathcal{C}_I$, then $v = \sqrt{2\beta \sigma^2} = \sup\{x \in \R :x^2/2\sigma^2 - \beta \leq 0\}$.
  \item If $(\beta,\sigma^2) \in \mathcal{C}_{II}$, then $v = \sqrt{2 } = \sup\{y \in \R : \frac{y^2}{2} - 1 \leq 0\}$.
  \item If $(\beta,\sigma^2) \in \mathcal{C}_{III}$, then $v > \max (\sqrt{2\beta \sigma^2}, \sqrt{2})$.
\end{enumerate}
In other words, the anomalous spreading corresponds to the case when the convex envelope $g$ crosses the $x$-axis to the right of the rate functions of particles of type~$1$ and~$2$.

\begin{figure}[ht]
\centering
\begin{subfigure}[h]{0.32\textwidth}
\centering
\begin{tikzpicture}[xscale=.8,yscale=.7]
  \draw [->] (-2.5,0) -- (2.5,0);
  \draw [->] (0,-3) -- (0,2.5);
  \draw [yellow!90!black, very thick, domain = -2.5:2.5] plot ({\x}, {\x*\x/2-1}) ;
  \draw [blue!60, very thick, domain = -2:2] plot ({\x}, {(\x*\x/(2*5/2)-.8)}) ;
  
  \draw [green!70!black, thick, domain =-.587:.587] plot ({\x}, {\x*\x/2-1});
  \draw [green!70!black, thick, domain =-2:-1.468] plot ({\x}, {(\x*\x/(2*5/2)-.8)});
  \draw [green!70!black, thick, domain =1.468:2] plot ({\x}, {(\x*\x/(2*5/2)-.8)});
  \draw [green!70!black, thick] (.587,-.828) -- (1.468,-.369) ;
  \draw [green!70!black, thick] (2,0) -- (2.5,1.707);
  \draw [green!70!black, thick] (-.587,-.828) -- (-1.468,-.369) ;
  \draw [green!70!black, thick] (-2,0) -- (-2.5,1.707);
  
  \draw (2,-.1) node[below right] {$v$} -- (2,.1);
\end{tikzpicture}
\caption{Case I: The speed of the multitype process is the same as the speed of the process of type 1 particles.}
\end{subfigure}
\hspace{\stretch{1}}
\begin{subfigure}[h]{0.32\textwidth}
\centering
\begin{tikzpicture}[xscale=.8,yscale=.7]
  \draw [->] (-2.5,0) -- (2.5,0);
  \draw [->] (0,-3) -- (0,2.5);
  \draw [yellow!90!black, very thick, domain = -2.5:2.5] plot ({\x}, {\x*\x/2-1}) ;
  \draw [blue!60, thick, domain = -.866:.866] plot ({\x}, {(\x*\x/(2*0.25)-1.5)}) ;
  
  \draw [green!70!black, domain =-.25:.25] plot ({\x}, {(\x*\x/(2*0.25)-1.5)}) ;
  \draw [green!70!black, thick, domain =-2.5:-1] plot ({\x}, {\x*\x/2-1});
  \draw [green!70!black, thick, domain =1:2.5] plot ({\x}, {\x*\x/2-1});
  \draw [green!70!black, thick] (.25,-1.375) -- (1,-.5) ;
  \draw [green!70!black, thick] (-.25,-1.375) -- (-1,-.5) ;

  \draw (1.414,-.1) node[below right] {$v$} -- (1.414,.1);
\end{tikzpicture}
\caption{Case II: The speed of the multitype process is the same as the speed of the process of type 2 particles.}
\end{subfigure}
\hspace{\stretch{1}}
\begin{subfigure}[h]{0.32\textwidth}
\centering
\begin{tikzpicture}[xscale=.8,yscale=.7]
  \draw [->] (-2.5,0) -- (2.5,0);
  \draw [->] (0,-3) -- (0,2.5);
  \draw [yellow!90!black, very thick, domain = -2.5:2.5] plot ({\x}, {\x*\x/2-1}) ;
  \draw [blue!60, thick, domain = -1.118:1.118] plot ({\x}, {(\x*\x/.5-2.5)}) ;
  \draw [green!70!black, thick] (.5,-2) -- (2,1) ;
  \draw [green!70!black, thick] (-.5,-2) -- (-2,1) ;
  
  \draw [green!70!black, thick, domain =-.5:.5] plot ({\x}, {(\x*\x/.5-2.5)});
  \draw [green!70!black, thick, domain =-2.5:-2] plot ({\x}, {(\x*\x/2-1)});
  \draw [green!70!black, thick, domain =2:2.5] plot ({\x}, {(\x*\x/2-1)});
  \draw (1.5,-.1) node[below right] {$v$} -- (1.5,.1);
\end{tikzpicture}
\caption{Case III: The anomalous spreading of the multitype process which is faster than the process consisting only of particles of type 1 or 2.}
\end{subfigure}
\caption{Convex envelope $g$ for $(\beta,\sigma^2)$ in any of the three domains of interest. The rate function of particles of type 1 and 2 are drawn in blue and yellow respectively, the function $g$ of the multitype branching process is drawn in green.}
\end{figure}
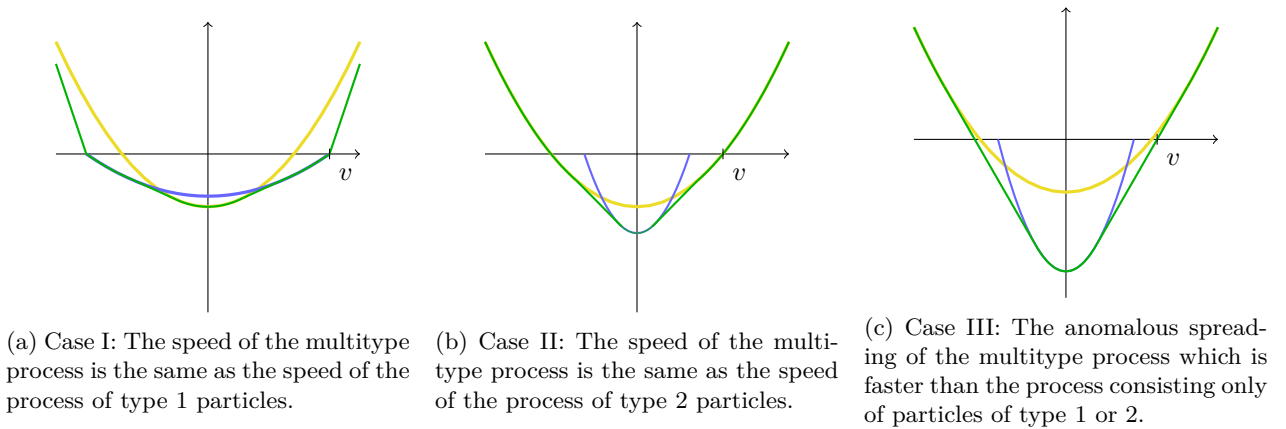

As mentioned above, Ren and Yang \cite{ReY14} studied the asymptotic behaviour of irreducible multitype BBM, and computed the speed at which that process invades its environment. In that case (i.e. when for all pair of types $i$ and $j$, individuals of type $i$ have positive probability of having at least one descendant of type $j$ after some time), this asymptotic behaviour is similar to the one obtained for a single-type BBM, with branching rate and variance obtained by considering the invariant measure of the Markov process describing the type of a typical individual. The notion of anomalous spreading in this case is thus very different, and the ancestral lineage of typical particles in the extremal process will present regular changes of type. As a result, we do not expect an asymptotic behaviour similar to the one observed in Theorem~\ref{thm:mainIII} to occur in irreducible multitype BBM.

In a different direction, Smadi and Vatutin \cite{SmV16} studied the limit in distribution of a critical reducible Galton-Watson process. It is worth noting that similarly to our results, they obtained three different behaviours for the system, with either the domination of particles of the first type, of the second type, or an interplay between the two.

\subsection{Relation to time-inhomogeneous branching processes}
\label{subsec:tiebrw}

The results presented here, in particular in the anomalous spreading case, are reminiscent of the known asymptotic for the extremal process of time-inhomogeneous branching Brownian motions. This model was introduced by Fang and Zeitouni \cite{FaZ}, and is defined as follows. Given $t \geq 0$, the process is a BBM consisting only of particles of type~$1$ until time $t/2$, at which time they all become simultaneously particles of type~$2$. It has been showed \cite{FaZ2,Mal15a} that depending on the value of $(\beta,\sigma^2)$, the position of the maximal displacement at time $t$ can exhibit different types of asymptotic behaviours. In particular, the logarithmic correction exhibit a strong phase transition in the phase space of $(\beta,\sigma^2)$.

Looking more closely at the convergence of the extremes, Bovier and Hartung \cite{BoH14,BoH15} obtained the convergence in distribution of the extremal process of the time-inhomogeneous BBM. In particular, for a multitype BBM with parameters $(\beta,\sigma^2) \in \mathcal{C}_{III}$ such that particles change from type~$1$ to type~$2$ at time $p^*t$, they showed that the extremal process converges towards $\mathcal{E}^{(III)}_\infty$, with an extra $\frac{1}{2\theta} \log t$ logarithmic correction for the centring. This is in accordance with our heuristic as we expect that the particles contributing to the extremal process at time $t$ to have been born from particles of type~$1$ around time $p^*t$.

Generalized versions of time-inhomogeneous BBM have been studied, in which the variance of particles evolves continuously over time \cite{MaZ,Mal15b}. In that case, the maximal displacement grows at constant speed with a negative correction of order $t^{1/3}$. It would be interesting to construct a multitype BBM, possibly with an infinite number of types, that would exhibit a similar phenomenon.

\subsection{F-KPP type equation associated to the multitype branching Brownian motion}
\label{subsec:pde}

Observe that similarly to the standard BBM, the multitype BBM can be associated to a reaction diffusion equation in the following way. Let $f,g : \R \to [0,1]$ be measurable functions, we define for all $x \in \R$:
\begin{align*}
  u(t,x) &= \E^{(1)}\left( \prod_{u \in \mathcal{N}^1_t} f( X_u(t)+x) \prod_{u \in \mathcal{N}^2_t} g( X_u(t)+x) \right)\\
  v(t,x) &= \E^{(2)}\left( \prod_{u \in \mathcal{N}^1_t} f( X_u(t)+x) \prod_{u \in \mathcal{N}^2_t} g( X_u(t)+x) \right)
   =  \E^{(2)}\left(\prod_{u \in \mathcal{N}^2_t} g( X_u(t)+x) \right)
\end{align*}
where $\P^{(1)}$ (respectively $\P^{(2)}$) is the law of the multitype BBM starting from one particle of type~$1$ (resp.~$2$), and we use the fact that particles of type~$2$ only produce offspring of type~$2$, with the usual convention $\prod_{u \in \emptyset} f( X_u(t)+x)  = 1$.

As under $\P^{(2)}$, the process behaves as a standard BBM, the function $v$ is a solution of the classical F-KPP reaction-diffusion equation
\begin{equation}
  \label{eqn:fkppdiff}
  \partial_t v = \frac{1}{2} \Delta v - v (1-v) \quad \text{with } v(0,x) = g(x).
\end{equation}
To obtain the partial differential equation satisfied by $u$, we observe that under law $\P^{(1)}$ one of the three following events might happen during the first $\dd t$ units of time:
\begin{itemize}
  \item with probability $\beta \dd t +o(\dd t)$, the original particle of type~$1$ branches into two offspring of type~$1$ that start i.i.d. processes with law $\P^{(1)}$;
  \item with probability $\alpha \dd t + o(\dd t)$ the particle of type~$1$ branches into one offspring of type~$1$ and one of type~$2$, that start independent processes with law $\P^{(1)}$ and $\P^{(2)}$ respectively;
  \item with probability $1 - (\beta+\alpha) \dd t$, the particle of type $2$ diffuses as the Brownian motion $\sigma B$ with diffusion constant $\sigma^2$
\end{itemize}
As a result, we have
\begin{align*}
  u(t+\dd t, x)
  &= \beta \dd t u(t,x)^2 + \alpha \dd t u(t,x)v(t,x) + (1 - (\beta + \alpha) \dd t) \E\left( u(t,x-\sigma B_{\dd t}) \right) + o(\dd t)\\
  &= u(t,x) + \dd t \left( \frac{\sigma^2}{2} \Delta u(t,x) - \beta u(1-u) - \alpha u(1-v) \right).
\end{align*}
This, together with \eqref{eqn:fkppdiff} show that $(u,v)$ is a solution of the following coupled F-KPP equation
\begin{equation}
  \label{eqn:fkppmulti}
  \begin{cases}
    & \partial_t u = \frac{\sigma^2}{2} \Delta  - \beta u(1-u) - \alpha u (1-v)\\
    & \partial_t v = \frac{1}{2} \Delta v - v (1-v)\\
    & u(0,x) = f(x), \quad v(0,x) = g(x).
  \end{cases}
\end{equation}

This non-linear coupling of F-KPP equation was introduced by Holzer \cite{Hol13}. In that article, the author conjectured this partial differential equation to exhibit an anomalous spreading phenomenon, and conjectured a phase diagram for the model \cite[Figure 1]{Hol13}. Our main results confirm this conjecture, and the diagram we obtain in Figure~\ref{figure} exactly matches (up to an adaptation of the notation $\sigma^2 \rightsquigarrow 2d$, $\beta \rightsquigarrow \alpha$ and $\alpha  \rightsquigarrow \beta$) the one obtained by Holzer. Additionally, Theorems~\ref{thm:mainI}--\ref{thm:mainIII} give the position of the front of $v_t$ in \eqref{eqn:fkppmulti}.

When starting with well chosen initial conditions $f$,$g$ (for example such that there exists $A > 0$ satisfying $f(x) =g(x) = 1$ for $x < -A$ and $f(x) = g(x) = 0$ for $x  > A$), we obtain the existence of a function $v_t$ such that for all $x \in \R$,
\[
  \lim_{t \to \infty} (u(t,x - v_t),v(t,x- v_t)) = (w_1(x),w_2(x)),
\]
where $(w_1,w_2)$ is a travelling wave solution of the coupled PDE and:
\begin{enumerate}
  \item if $(\beta,\sigma^2) \in \mathcal{C}_I$, then $v_t = \sqrt{2 \beta \sigma^2} t - \frac{3}{2\sqrt{2\beta/\sigma^2}} \log t$;
  \item if $(\beta,\sigma^2) \in \mathcal{C}_{II}$, then $v_t = \sqrt{2} t - \frac{3}{2\sqrt{2}} \log t$;
  \item if $(\beta,\sigma^2) \in \mathcal{C}_{III}$, then $v_t = v t$, with $v$ defined in Theorem~\ref{thm:mainIII}.
\end{enumerate}

Holzer further studied a linearised version of \eqref{eqn:fkppmulti} in \cite{Hol16}, and showed the presence of an anomalous spreading property in that context. However, the phase diagram in that case is of a different nature as the one we obtain in Figure~\ref{figure}. We believe that the phase diagram of this linearised PDE equation should be related to first moment estimates on the number of particles above a given level in the multitype BBM.

The equation \eqref{eqn:fkppmulti} should also be compared to the partial differential equation studied in \cite{BoC14}. In that article, they considered a population with a family of traits indexed by a parameter $\theta \in (\theta_{\min},\theta_{\max})$, that modifies the motility of particles. This was proposed as a model for the invasion of cane toads is Australia, as that population consists of faster individuals, that sacrifice part of their reproduction power as a trade off, and slower individuals that reproduce more easily. The multitype BBM we consider here could then be thought of as some toy-model for this partial differential equation.

\subsection{Future developments}
\label{subsec:oq}

We recall that Theorems \ref{thm:mainI}--\ref{thm:mainIII} cover the asymptotic behaviour of the two-type reducible BBM assuming that $(\beta,\sigma^2) \in \mathcal{C}_I \cup \mathcal{C}_{II} \cup \mathcal{C}_{III}$. However, it does not give the asymptotic behaviour of this process when $(\beta,\sigma^2)$ belongs to the boundary of this set. Understanding the behaviour of the process at these points could help understanding the phase transitions occurring between the different areas of the state space. This would allow results similar to the ones developed in \cite{BoH20} for time-inhomogeneous BBM to be considered in reducible multitype BBM.

We conjecture the following behaviours for the branching Brownian motion at the boundary between areas $\mathcal{C}_I$ and $\mathcal{C}_{III}$.
\begin{conjecture}
Assume that $\beta > 1$ and $\sigma^2 = \frac{\beta}{2\beta - 1}$, then there exist $c > 0$ and $\tilde{\mathfrak{D}}$ such that
\[
  \sum_{u \in \mathcal{N}_t^2} \delta_{X_u(t) - \sqrt{2\beta\sigma^2}t + \frac{1}{\sqrt{2\beta/\sigma^2}} \log t}
\]
converges to a DPPP($c Z^{(1)}_\infty e^{-\sqrt{2\beta/\sigma^2} x}\dd x, \tilde{\mathfrak{D}}$).
\end{conjecture}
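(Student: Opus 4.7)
The plan is to adapt the proof strategy of Theorem~\ref{thm:mainI} to the critical boundary configuration, where both binding constraints in the optimisation problem~\eqref{eqn:optimizationProblem} become simultaneously active at $p^{*}=1$, $a^{*}=\sqrt{2\beta\sigma^{2}}$. Writing $\theta:=\sqrt{2\beta/\sigma^{2}}$, on the boundary this $\theta$ coincides with the critical parameter $\theta_c^{(1)}$ of the type~1 additive martingale, so the derivative martingale $Z^{(1)}_\infty$ is the natural driver of the random intensity of the limit. The key difference with $\mathcal{C}_I$ is that the switch time $T(u)$ of a contributing type~2 particle is no longer $O(1)$-close to $t$ but instead spreads diffusively over a window $[t-O(\sqrt{t}),t]$.

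First I would apply the multitype many-to-one lemma of Section~\ref{sec:mto} to express the first moment of type~2 particles in a unit window around $vt+x$ (with $v=\sqrt{2\beta\sigma^{2}}$) as
\[
  \alpha\int_0^t \frac{e^{\beta s+(t-s)}}{\sqrt{2\pi\Sigma_s}}\,e^{-(vt+x)^{2}/(2\Sigma_s)}\,\dd s,\qquad \Sigma_s:=\sigma^{2}s+(t-s).
\]
Substituting $s=pt$ yields an exponent of the form $tf(p)-\theta x+O(1/t)$ where $f(p)=(\beta-1)p+1-\beta\sigma^{2}/((\sigma^{2}-1)p+1)$. A direct computation shows $f(1)=0$, and on the boundary one moreover has $f'(1)=0$ and $f''(1)=-2(\beta-1)^{2}/\beta<0$, so the maximum at the endpoint $p=1$ is degenerate. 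A Laplace analysis then gives $\int_0^t e^{tf(s/t)}\,\dd s\sim \sqrt{\pi t/(2|f''(1)|)}$, an extra $\sqrt{t}$ factor compared to the corner maximum of $\mathcal{C}_I$. Combined with the Gaussian prefactor $1/\sqrt{\Sigma_s}\sim 1/(\sigma\sqrt{t})$, the unrestricted first moment stabilises at $\Theta(e^{-\theta x})$, $t$-independent, as opposed to $\Theta(e^{-\theta x}/\sqrt{t})$ in $\mathcal{C}_I$.

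This extra $\sqrt{t}$ exactly cancels the Gaussian $1/\sqrt{t}$ prefactor, thereby eliminating the $\frac{1}{2\theta}\log t$ contribution to the logarithmic correction; only the $\frac{1}{\theta}\log t$ ballot/clustering correction remains, explaining the centring $vt-\frac{1}{\theta}\log t$. Given this, I would proceed as in Theorem~\ref{thm:mainI}: a second-moment truncation argument restricts attention to type~2 particles whose type~1 ancestor follows a trajectory respecting a suitable Bramson-type barrier; the convergence of the derivative martingale of the type~1 BBM identifies the random intensity as $cZ^{(1)}_\infty e^{-\theta x}\,\dd x$; and Maillard's superposition-stability characterisation~\cite{Mai13} gives the DPPP form, with $c>0$ and the decoration law $\tilde{\mathfrak{D}}$ obtained implicitly, analogously to Theorem~\ref{thm:mainI}.

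The main obstacle is the coexistence of two clustering mechanisms that must be handled simultaneously: the type~1 genealogical clustering of Theorem~\ref{thm:mainI}, and the spreading of switch times $T(u)$ over a diffusive $\sqrt{t}$-window, each generating type~2 subtrees of macroscopic duration $O(\sqrt{t})$. Consequently, the second-moment estimates required to apply the DPPP framework must be uniform in a diverging range of switch times; controlling the contribution of ``early'' switches with $t-T(u)\gg\sqrt{t}$, which would naively resemble a $\mathcal{C}_{III}$ regime, requires sharp tail estimates on the many-to-one integrand away from $s=t$. Finally, pinning the intensity down to exactly $Z^{(1)}_\infty$, rather than some other functional of the type~1 BBM, requires a careful matching between the subleading asymptotics of the Laplace computation and the Seneta--Heyde-type rescaling relating the critical additive martingale of type~1 to its derivative martingale.
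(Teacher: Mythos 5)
This statement is one of the paper's \emph{conjectures}: the paper offers no proof of it, only a one-sentence heuristic (contributing particles should satisfy $t-T(u)=O(t^{1/2})$, the intensity should stay driven by $Z^{(1)}_\infty$, and the decoration should be the extremal process of a type-$2$ BBM conditioned to travel at the supercritical speed $\sqrt{2\beta\sigma^2}>\sqrt{2}$). So there is no proof in the paper to compare yours against, and your proposal should be judged as an attempt at an open problem. On that basis, your first-moment analysis is correct and usefully sharpens the paper's heuristic: with $f(p)=(\beta-1)p+1-\beta\sigma^2/((\sigma^2-1)p+1)$ one indeed gets $f(1)=0$ always, and on the boundary $\sigma^2=\beta/(2\beta-1)$ also $f'(1)=0$ and $f''(1)=-2(\beta-1)^2/\beta$, so the Laplace point at $p=1$ is degenerate, $T(u)$ spreads over a $\sqrt{t}$-window, and the extra $\sqrt{t}$ in the integral is consistent with the conjectured centring $vt-\frac{1}{\theta}\log t$ rather than $vt-\frac{3}{2\theta}\log t$.

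However, what you have written is a plan, not a proof, and the steps you defer are exactly the ones that make this a conjecture rather than a theorem. Three gaps are substantive. First, the passage from the unrestricted first moment to the centring requires the full barrier computation: the type-$1$ portion of the trajectory must be confined below a Bramson-type curve over a time $t-O(\sqrt t)$, while the type-$2$ subtree of length $O(\sqrt t)$ is in the large-deviation regime of \eqref{eqn:largeDevMax} and contributes its own $(t-s)^{-1/2}$ prefactor; you assert that the net logarithmic correction is $\frac{1}{\theta}\log t$ but do not carry out the combined estimate, and the answer is not obviously the naive sum of the two effects. Second, the strategy of Theorem~\ref{thm:mainI} relies on reducing to switch times $t-T(u)\le R$ with $R$ fixed, then invoking the known convergence of the type-$1$ extremal process and the superposition characterization of \cite{Mai13}; here the switch times range over a window of diverging width, so neither Lemma~\ref{lem:convI} nor the $R\to\infty$ monotonicity argument transfers, and the decoration would have to mix the type-$1$ genealogical decoration with the $\mathcal{D}^{\rho}$-type decorations of \eqref{eqn:defineSupercriticalDecoration} for subtrees of unbounded length --- you name this obstacle but give no mechanism to resolve it. Third, identifying the intensity as $Z^{(1)}_\infty$ (rather than some other limit functional) requires a genuine Seneta--Heyde-type argument at the critical parameter, since $W_\infty(\theta)=0$ there; Lemma~\ref{lem:cltExpanded}, which plays this role in the anomalous case, is only proved for $|\theta|<\sqrt{2}$-type subcritical parameters and has no analogue in the paper at criticality. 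So the proposal is a credible and correctly computed road map, consistent with the authors' stated heuristic, but it does not constitute a proof of the conjecture.
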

Indeed, in this situation, particles $u$ of type~$2$ contributing to the extremal process are expected to satisfy $t - T(u) = O(t^{1/2})$. Therefore, the extremal process keeps an intensity driven by the derivative martingale of particles of type~$1$, and the decoration point measure is given by the extremal process of a BBM of particles of type~$2$ conditioned to travel at speed $\sqrt{2 \beta \sigma^2} > \sqrt{2}$.

Similarly, at the boundary between areas $\mathcal{C}_{II}$ and $\mathcal{C}_{III}$, the following behaviour is expected.
\begin{conjecture}
Assume that $\beta > 1$ and $\sigma^2 = 2 -\beta$, then there exist $c > 0$ and a random variable $\tilde{Z}$ such that
\[
  \sum_{u \in \mathcal{N}_t^2} \delta_{X_u(t) - \sqrt{2}t + \frac{1}{\sqrt{2}} \log t}
\]
converges to a DPPP($c \tilde{Z} e^{-\sqrt{2} x}\dd x, {\mathfrak{D}}$).
\end{conjecture}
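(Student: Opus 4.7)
The plan is to adapt the strategy of Theorems~\ref{thm:mainII} and~\ref{thm:mainIII}, working conditionally on the $\sigma$-algebra $\calF^1_\infty$ generated by the type-1 BBM. Writing $m_t := \sqrt 2\, t - \tfrac{1}{\sqrt 2}\log t$ and $\mathcal E_t := \sum_{u \in \mathcal N^2_t}\delta_{X_u(t)-m_t}$, since type-2 particles are born along type-1 ancestral paths as a rate-$\alpha$ Poisson process and each birth independently initiates a standard BBM, the Campbell formula gives, for continuous $\phi \geq 0$ with support bounded below,
\[
\E\bigl[e^{-\langle \mathcal E_t, \phi\rangle}\,\big|\,\calF^1_t\bigr] = \exp\!\Bigl(-\alpha \int_0^t \sum_{u \in \mathcal N^1_s} H_\phi\bigl(t-s,\, X_u(s)-m_t\bigr)\, \dd s\Bigr),
\]
where $H_\phi(r,y) := 1 - \E\bigl[\exp(-\sum_{v \in \mathcal N_r}\phi(X_v(r)+y))\bigr]$ is the Laplace deficit of the standard BBM extremal process. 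It remains to show that the random integrand converges in probability, as $t \to \infty$, to $c(\phi)\, W^{(1)}_\infty(\sqrt 2)$ for some explicit $c(\phi) > 0$, where $W^{(1)}_\infty(\sqrt 2) = \lim_{s\to\infty} \sum_{u \in \mathcal N^1_s} e^{\sqrt 2 X_u(s) - 2s}$ is the a.s.\ limit of the type-1 additive martingale at parameter $\theta = \sqrt 2$.

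Next, combining the DPPP convergence of the standard BBM extremal process with the heavy-tail estimate $\P(Z_\infty > u) \sim c_\star/u$ of the derivative martingale, I would establish Bramson's asymptotic
\[
H_\phi(r, y) \sim C(\phi)\, z\, e^{-\sqrt 2 z} \qquad (r \to \infty,\ z := -y - m_r \to +\infty)
\]
with $C(\phi)$ an explicit positive constant. Substituting $z = \sqrt 2 s - X_u(s) + \kappa_{s,t}$ where $\kappa_{s,t} := \tfrac{3}{2\sqrt 2}\log(t-s)-\tfrac{1}{\sqrt 2}\log t$, the inner sum rewrites as
\[
\sum_{u \in \mathcal N^1_s} H_\phi(t-s,\, X_u(s)-m_t) \approx C(\phi)\, \frac{t}{(t-s)^{3/2}} \sum_{u \in \mathcal N^1_s} \bigl(\sqrt 2 s - X_u(s) + \kappa_{s,t}\bigr)_{+}\, e^{\sqrt 2 X_u(s) - 2s}.
\]
Because the assumption $\beta > 1$ on the boundary $\sigma^2 + \beta = 2$ forces $\sqrt 2 < \theta_c^{(1)} := \sqrt{2\beta/\sigma^2}$, the martingale $W^{(1)}_s(\sqrt 2) = \sum_{u \in \mathcal N^1_s} e^{\sqrt 2 X_u(s) - 2s}$ is uniformly integrable, which pins down the natural candidate $\tilde Z = W^{(1)}_\infty(\sqrt 2)$; this is consistent with the limit one obtains by letting $\theta \to \sqrt 2$ from within $\mathcal C_{III}$ in Theorem~\ref{thm:mainIII}.

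The main obstacle is pathwise control of the time integral, since a naive expectation of the integrand formally diverges (the objects appearing in Bramson's formula are only $L^1$-bounded, not pointwise dominated). A natural approach is a spine/size-biasing decomposition in the spirit of Lyons~\cite{Lyo97}: under the $\sqrt 2$-tilted measure, the spine particle moves at speed $\sqrt 2\, \sigma^2$ with a Brownian fluctuation, and integrating Bramson's estimate along the spine yields, after appropriate truncation, a finite limit of the form $c(\phi)\, W^{(1)}_\infty(\sqrt 2)$ (which reduces to $c\, W^{(1)}_\infty(\sqrt 2)\, e^{-\sqrt 2 x}$ when $\phi = \indset{[x,\infty)}$). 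The technical steps are (i) two-sided Bramson-type estimates for $H_\phi$ uniform in $r$, covering the transition between the large-$r$ DPPP regime and the short-$r$ Gaussian regime; (ii) justifying the replacement of $W^{(1)}_s(\sqrt 2)$ by its almost-sure limit inside the integral; and (iii) identifying $c$ and verifying continuity of the limit as $(\beta,\sigma^2)$ approaches the boundary from either side. Once these are handled, Maillard's stability-by-superposition characterisation~\cite{Mai13} identifies the limit extremal process as a DPPP $(c\, W^{(1)}_\infty(\sqrt 2)\, e^{-\sqrt 2 x}\, \dd x,\, \mathfrak D)$, confirming the conjecture.
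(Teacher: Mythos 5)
First, be aware that this statement is one of the paper's \emph{conjectures}: the authors offer no proof, only the one-line heuristic that the contributing particles should satisfy $T(u)=O(t^{1/2})$, so there is no argument of theirs to compare yours against. Your outline does capture the right skeleton. The Campbell/Poisson reduction conditionally on $\calF^1$ is exactly Corollary~\ref{cor:poissonSum}; the observation that on the boundary $\beta+\sigma^2=2$ the weight $e^{\sqrt{2}X_u(s)-2s}$ is precisely the type-$1$ additive martingale at parameter $\sqrt{2}$, which is subcritical there (since $\beta>1$ gives $\sqrt{2}<\sqrt{2\beta/\sigma^2}$), is the correct reason to expect $\tilde{Z}=W^{(1)}_\infty(\sqrt{2})$; and the Gaussian factor in the tail estimate is indeed what should localise the birth times to $s=O(t^{1/2})$, matching the authors' heuristic.

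The genuine gap is that the entire difficulty sits inside your step (i), and your proposed route to it cannot work as described. The relevant overshoots are $z=\sqrt{2}s-X_u(s)+\kappa_{s,t}\asymp\sqrt{2}(1-\sigma^2)s\asymp t^{1/2}\asymp(t-s)^{1/2}$, i.e.\ exactly at the crossover between the critical regime covered by Lemma~\ref{lem:abbs} (the tool for $\calC_{II}$) and the large-deviation regime covered by Lemma~\ref{lem:jointLargeDev} (the tool for $\calC_{III}$); neither applies, and a new moderate-deviation analogue --- a two-sided asymptotic for $H_\phi(r,y)$, decoration included, uniform over $z\asymp\sqrt{r}$ --- is the real content of the conjecture. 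Deriving it by ``combining the DPPP convergence with $\P(Z_\infty>u)\sim c_\star/u$'' only yields the iterated limit $r\to\infty$ then $z\to\infty$ and gives no uniformity in the regime you need; moreover, as written your asymptotic $C(\phi)\,z\,e^{-\sqrt{2}z}$ omits the factor $e^{-z^2/(2r)}$, without which the $s$-integral does not even converge. More seriously, if one inserts the standard uniform estimate $\P(M_r\geq m_r+z)\asymp z\,e^{-\sqrt{2}z-z^2/(2r)}$ into your own substitution, the linear prefactor $z\asymp t^{1/2}$ combined with the $t^{1/2}$-long window of birth times makes the conditional intensity of order $t^{1/2}\,W^{(1)}_\infty(\sqrt{2})$ under the stated centring $\sqrt{2}t-\tfrac{1}{\sqrt{2}}\log t$; so either the prefactor in this regime is genuinely not of order $z$, or the coefficient of $\log t$ must itself be re-derived. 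Either way, the divergence you set aside as a domination issue to be fixed ``after appropriate truncation'' is not a technicality: it is precisely where the logarithmic correction, the constant $c$, and the claim that the decoration is the critical $\mathfrak{D}$ rather than some interpolation towards $\mathfrak{D}^\rho$ are decided. A proof would have to establish this uniform moderate-deviation estimate first; only then would the remainder of your outline go through along the lines of Section~\ref{sec:anomalous}.
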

There, we used the fact that particles $u$ of type~$2$ contributing to the extremal process are expected to satisfy $T(u) = O(t^{1/2})$.

In the case when $\beta < 1$ and $\sigma^2 \beta = 1$, which corresponds to the boundary between cases $\mathcal{C}_I$ and $\mathcal{C}_{II}$, the picture is less clear as at all time $s$ between $0$ and $t$, particles should have the same probability to reach the maximal position, at least to the first order, as the BBM of particles of type~$1$ and of particles of type~$2$ have same speed.

Further generalisations of the model we consider in this article could be considered. A more general reducible multitype branching Brownian motions with a finite number of states would be expected to exhibit a similar behaviour. One could also allow particles to have different drift coefficients in addition to the different variance terms and branching rates. In that situation, one expects an optimization problem similar to the one studied in \cite{Mal15a} to appear, with a similar resolution of proving that the trajectory followed by particles reaching the maximal position is the same as the one inferred from the solution of the optimization problem.

Proving Theorems \ref{thm:mainI}--\ref{thm:mainIII} for two-type reducible branching Brownian motions in which particles of type~$1$ and type~$2$ split into a random number of children at each branching event, say $L_1$ for particles of type~$1$ and $L_2$ for particles of type~$2$ would be a other natural generalisation of our results. A natural condition to put on the reproduction laws to obtain the asymptotic behaviour observed in Theorem~\ref{thm:mainIII} is
\[
  \E (L_1 \log L_1 ) + \E (L_2 \log L_2) < \infty.
\]
It is worth noting that anomalous spreading might occur even if $\E(L_2) < 1$, i.e. even if the genealogical tree of a particle of type~$2$ is subcritical and grows extinct almost surely.

While we only take interest here in the asymptotic behaviour of the extremal particles in this article, we believe that many other features of multitype branching Brownian motions might be of interest, such as the growth rate of the number of particles of type~$2$ to the right of $a t$ for $a < v$, the large deviations of the maximal displacement $M_t$ at time $t$, or the convergence of associated (sub)-martingales.

\section{Preliminary results on the branching Brownian motion}
\label{sec:facts}

We list in this section results on the standard BBM, that we use to study the two-type reducible BBM. For the rest of the section, $(X_u(t), u \in \mathcal{N}_t)_{t \geq 0}$ will denote a standard BBM, with branching rate $1$ and diffusion constant $1$, i.e. that has the same behaviour as particles of type $2$. To translate the results of this section to the behaviour of particles of type $1$ as well, it is worth noting that for all $\beta, \sigma > 0$:
\begin{equation}
  \label{eqn:scaling}
  \left(\frac{\sigma}{\sqrt{\beta}} X_u(\beta t), u \in \mathcal{N}_{\beta t}\right)_{t \geq 0}
\end{equation}
is a branching Brownian with branching rate $\beta$ and diffusion constant $\sigma^2$.

The rest of the section is organised as follows. We introduce in Section~\ref{subsec:martingales} the additive martingales of the BBM, and in particular the derivative martingale that plays a special role in the asymptotic behaviour of the maximal displacement of the BBM. We then provide in Section~\ref{subsec:maxdisp} a series of uniform asymptotic estimates on the maximal displacement of the BBM. Finally, in Section~\ref{subsec:exProc}, we introduce the decoration measures and extremal processes appearing when studying particles near the rightmost one in the BBM.

\subsection{Additive martingales of the branching Brownian motion}
\label{subsec:martingales}

We begin by introducing the additive martingales of the BBM. For all $\theta \in \R$, the process
\begin{equation}
  \label{eqn:defW}
  W_t(\theta) := \sum_{u \in \mathcal{N}_t} e^{\theta X_u(t) - t \left(\tfrac{\theta^2}{2}+1\right)}, \quad t \geq 0
\end{equation}
is a non-negative martingale. It is now a well-known fact that the martingale $(W_t(\theta), t \geq 0)$ is uniformly integrable if and only if $|\theta| < \sqrt{2}$, and in that case it converges towards an a.s. positive limiting random variable 
\begin{equation}
  \label{eqn:limitW}
  W_\infty(\theta) := \lim_{t \to \infty} W_t(\theta).
\end{equation}
Otherwise, we have $\lim_{t \to \infty} W_t(\theta) = 0$ a.s. This result was first shown by \cite{Nev}. It can also be obtained by a specific change of measure technique, called \emph{the spinal decomposition}. This method was pioneered by Lyons, Pemantle and Peres \cite{LPP95} for the study of the martingale of a Galton-Watson process, and extended by Lyons \cite{Lyo97} to spatial branching processes setting.

For all $|\theta| < \sqrt{2}$ the martingale limit $W_\infty(\theta)$ is closely related to the number of particles moving at speed $\theta$ in the BBM. For example, by \cite[Corollary 4]{Big92}, for all $h > 0$ we have
\begin{equation}
  \label{eqn:nbPartsSpeedtheta}
  \lim_{t \to \infty} t^{1/2} e^{t \left(\frac{\theta^2}{2} - 1 \right)}\sum_{u \in \mathcal{N}_t} \ind{|X_u(t) - \theta t| \leq h} = \frac{2 \sinh(\theta h)}{\theta} W_\infty(\theta) \quad \text{a.s.}
\end{equation}
This can be thought of as a local limit theorem result for the position of a particle sampled at random at time $t$, where a particle at position $x$ is sampled with probability proportional to $e^{\theta x}$. A Donsker-type theorem was obtained in \cite[Section C]{Pai} for this quantity, see also \cite{GKS18}. In particular, for any continuous bounded function $f$, one has
\begin{equation}
  \label{eqn:cltSpeedTheta}
  \lim_{t \to \infty} \sum_{u \in \mathcal{N}_t} f\left(\tfrac{X_u(t) - \theta t}{t^{1/2}}\right) e^{\theta X_u(t) - t \left(\frac{\theta^2}{2} + 1 \right)} =  \frac{W_\infty(\theta)}{\sqrt{2\pi}} \int_\R e^{-\frac{z^2}{2}} f(z) \dd z \text{ a.s.}
\end{equation}
This justifies the fact that the variable $W_\infty(\theta)$ appears in the limiting distribution of the extremal process in the anomalous spreading case, by the heuristics described in Section~\ref{subsec:stateoftheart}.

To prove Theorem \ref{thm:mainIII}, we use the following slight generalization of the above convergence.
\begin{lemma}
\label{lem:cltExpanded}
Let $a < b$ and $\lambda > 0$. For all continuous bounded function $f : [a,b] \times \R \to \R$, we have
\begin{equation}
  \label{eqn:cltExpanded}
  \lim_{t \to \infty} \frac{1}{t^{1/2}}\int_{\lambda t + a t^{1/2}}^{\lambda t + bt^{1/2}} \sum_{u \in \mathcal{N}_s} f\left(\tfrac{s - \lambda t}{t^{1/2}},\tfrac{X_u(s) - \theta s}{t^{1/2}}\right) e^{\theta X_u(s) - s \left(\frac{\theta^2}{2} + 1 \right)} \dd s =  \frac{W_\infty(\theta)}{\sqrt{2\pi\lambda}} \int_{[a,b] \times \R} e^{-\frac{z^2}{2\lambda}} f(r,z) \dd r \dd z \text{ a.s.}
\end{equation}
\end{lemma}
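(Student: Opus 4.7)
The plan is to reduce \eqref{eqn:cltExpanded} to the pointwise convergence \eqref{eqn:cltSpeedTheta} via a change of variables, followed by an a.s.\ dominated convergence argument on $[a,b]$. Setting $r = (s-\lambda t)/t^{1/2}$ and $s(t,r) := \lambda t + r t^{1/2}$, the left-hand side of \eqref{eqn:cltExpanded} becomes $\int_a^b \Phi_t(r)\, \dd r$ with
\[
  \Phi_t(r) := \sum_{u \in \mathcal{N}_{s(t,r)}} f\!\left(r, \tfrac{X_u(s(t,r)) - \theta s(t,r)}{t^{1/2}}\right) e^{\theta X_u(s(t,r)) - s(t,r)(\theta^2/2 + 1)} .
\]

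Next I establish a uniform domination. Since $|f| \leq \|f\|_\infty$, we have $|\Phi_t(r)| \leq \|f\|_\infty W_{s(t,r)}(\theta)$. As $s(t,r) \geq \lambda t - |a| t^{1/2}$ tends to $\infty$ uniformly in $r \in [a,b]$ and $W_s(\theta) \to W_\infty(\theta)$ a.s.\ as $s \to \infty$, we get $\sup_{r \in [a,b]} W_{s(t,r)}(\theta) \to W_\infty(\theta)$ a.s., so the family $\{\Phi_t\}_{t \geq 0}$ is eventually dominated a.s.\ on $[a,b]$ by a (random) constant, which is Lebesgue-integrable on the compact interval $[a,b]$.

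I then show pointwise a.s.\ convergence: for each fixed $r \in [a,b]$, $\Phi_t(r) \to \Phi_\infty(r) := \tfrac{W_\infty(\theta)}{\sqrt{2\pi\lambda}} \int_\R e^{-z^2/(2\lambda)} f(r,z)\,\dd z$. With $s = s(t,r)$ and $y_u := (X_u(s) - \theta s)/s^{1/2}$, we have $(X_u(s)-\theta s)/t^{1/2} = (s^{1/2}/t^{1/2}) y_u$, and $s^{1/2}/t^{1/2} \to \sqrt{\lambda}$. Applying \eqref{eqn:cltSpeedTheta} at time $s$ to the continuous bounded function $z \mapsto f(r, \sqrt{\lambda} z)$ gives the desired limit, but evaluated at $\sqrt{\lambda} y_u$ in place of $(s^{1/2}/t^{1/2}) y_u$. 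To control the difference, I split the sum over $\{|y_u| \leq K\}$ and $\{|y_u| > K\}$. On the first set, uniform continuity of $f$ on the compact rectangle $[a,b] \times [-\sqrt{\lambda}K-1, \sqrt{\lambda}K+1]$ combined with $|s^{1/2}/t^{1/2} - \sqrt{\lambda}| = O(t^{-1/2})$ bounds the difference by $o(1)\, W_s(\theta)$; on the second set, the contribution is bounded by $2\|f\|_\infty$ times a sum which, by \eqref{eqn:cltSpeedTheta} applied to a continuous approximation of $\ind{|z|>K/2}$, is eventually at most a constant times $W_\infty(\theta) \int_{|z|>K/2} e^{-z^2/2}\dd z$, and hence arbitrarily small uniformly in $t$ for $K$ large.

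Finally, by Fubini's theorem (applied to the jointly measurable set of pairs $(r,\omega)$ where the pointwise convergence fails, which has measure zero in $\omega$ for every $r$), for a.s.\ $\omega$ the convergence $\Phi_t(r) \to \Phi_\infty(r)$ holds for Lebesgue-a.e.\ $r \in [a,b]$; combined with the uniform domination, the pathwise dominated convergence theorem gives $\int_a^b \Phi_t(r)\,\dd r \to \int_a^b \Phi_\infty(r)\,\dd r$ a.s., which is the right-hand side of \eqref{eqn:cltExpanded}. The main technical obstacle is the pointwise-in-$r$ step: the effective test function in the sum retains a residual $t$-dependence through $s^{1/2}/t^{1/2}$, preventing a direct appeal to \eqref{eqn:cltSpeedTheta}, and the truncation argument leveraging uniform continuity of $f$ on compact sets is what resolves this.
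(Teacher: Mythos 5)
Your proof is correct, and it reaches the conclusion by a route that is organized differently from the paper's, although both rest on the same key input \eqref{eqn:cltSpeedTheta} and both must deal with the same two technical points: the mismatch between the normalizations $s^{1/2}$ and $(\lambda t)^{1/2}$ (handled in both cases by uniform continuity on compacts plus a truncation in the spatial variable) and the passage from a single time $s$ to the integral over $s$. Where you differ is in the second point: the paper first proves the statement for functions of the form $\ind{r \in [a,b]} g(x)$ with $g$ compactly supported by directly averaging the convergence \eqref{eqn:cltSpeedTheta} over the window of times $s$, then approximates a general $f$ by step functions in $r$ and finally removes the compact-support restriction with a cutoff $\chi_R$; this way \eqref{eqn:cltSpeedTheta} is only ever invoked for a countable family of test functions, so the exceptional null sets cause no difficulty. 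You instead change variables to $r=(s-\lambda t)/t^{1/2}$, prove pointwise-in-$r$ almost sure convergence of $\Phi_t(r)$, and conclude by pathwise dominated convergence using the uniform bound $\sup_{r}W_{s(t,r)}(\theta)\to W_\infty(\theta)$. This is arguably cleaner (no step-function approximation is needed), but it is precisely here that the one delicate point of your argument sits: the null set in \eqref{eqn:cltSpeedTheta} depends on the test function, hence on $r$, and your appeal to Fubini over the product of Lebesgue measure on $[a,b]$ and $\P$ is exactly what is needed to convert ``for each $r$, a.s.'' into ``a.s., for a.e.\ $r$''; this requires the joint measurability of $(r,\omega)\mapsto\Phi_t(r)(\omega)$, which does hold but should be stated. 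With that observation made explicit, your argument is complete and yields the same conclusion as the paper's.
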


\begin{proof}
As a first step, we show that \eqref{eqn:cltExpanded} holds for $f : (r,x) \mapsto \ind{r \in [a,b]} g(x)$, with $g$ a continuous compactly supported function. Using that
\[
  \lim_{s \to \infty} \sum_{u \in \mathcal{N}_s} g\left(\lambda^{1/2}\tfrac{X_u(s) - \theta s}{s^{1/2}}\right) e^{\theta X_u(s) - s \left(\frac{\theta^2}{2} + 1 \right)} =  \frac{W_\infty(\theta)}{\sqrt{2\pi}} \int_\R e^{-\frac{z^2}{2}} g(\lambda^{1/2}z) \dd z \quad \text{a.s,}
\]
by \eqref{eqn:cltSpeedTheta} we immediately obtain that
\[
  \lim_{t \to \infty} \frac{1}{t^{1/2}}\int_{\lambda t + a t^{1/2}}^{\lambda t + bt^{1/2}} \sum_{u \in \mathcal{N}_s} g\left(\lambda^{1/2}\tfrac{X_u(s) - \theta s}{s^{1/2}}\right) e^{\theta X_u(s) - s \left(\frac{\theta^2}{2} + 1 \right)} \dd s =  (b-a)\frac{W_\infty(\theta)}{\sqrt{2\pi\lambda}} \int_{\R} e^{-\frac{z^2}{2\lambda}} g(z) \dd z \text{ a.s.}
\]

We then observe that for all $s \in [\lambda t + at^{1/2},\lambda t + bt^{1/2}]$, we have
\[
  s^{1/2} = \left(\lambda t + s - \lambda t \right)^{1/2} = (\lambda t)^{1/2} \left( 1 + \frac{s - \lambda t}{\lambda t} \right)^{1/2}.
\]
As $\frac{s - \lambda t}{\lambda t} \in [at^{-1/2}/\lambda, b t^{-1/2}/\lambda]$, there exists a constant $K> 0$ such that for all $t \geq 1$, we have
\[
  \sup_{s \in [\lambda t + at^{1/2},\lambda t + bt^{1/2}]} \left| \left( 1 + \frac{s - \lambda t}{\lambda t} \right)^{1/2} - 1 \right| \leq K t^{-1/2},
\]
so $\left| s^{1/2} - (\lambda t)^{1/2} \right| \leq K \lambda^{1/2}$ uniformly in $s \in [\lambda t + a t^{1/2},\lambda t + b t^{1/2}]$, for all $t$ large enough.

Then, using the uniform continuity and compactness of $g$, for all $\epsilon > 0$ we have
\[
  \sup_{s \in [\lambda t + a t^{1/2},\lambda t + b t^{1/2}b], x \in \R} \left|g(\lambda^{1/2}x/s^{1/2}) - g(x / t^{1/2}) \right| \leq \epsilon
\]
for all $t$ large enough. Therefore, for all $\epsilon > 0$,
\begin{multline*}
  \limsup_{t \to \infty} \frac{1}{t^{1/2}}\int_{\lambda t + a t^{1/2}}^{\lambda t + bt^{1/2}} \sum_{u \in \mathcal{N}_s} \left|g\left(\tfrac{X_u(s) - \theta s}{s^{1/2}}\right) - g\left(\tfrac{X_u(s) - \theta s}{(\lambda t)^{1/2}}\right)\right| e^{\theta X_u(s) - s \left(\frac{\theta^2}{2} + 1 \right)} \dd s \\
  \leq  \limsup_{t \to \infty} \frac{1}{t^{1/2}} \int_{\lambda t + a t^{1/2}}^{\lambda t + bt^{1/2}} \sum_{u \in \mathcal{N}_s} \epsilon e^{\theta X_u(s) - s \left(\frac{\theta^2}{2} + 1 \right)} \dd s  = \epsilon (b-a) W_\infty(\theta) \quad \text{a.s.}
\end{multline*}
Letting $\epsilon \to 0$, we finally obtain
\begin{equation}
  \label{eqn:step1clt}
  \lim_{t \to \infty} \frac{1}{t^{1/2}}\int_{\lambda t + a t^{1/2}}^{\lambda t + bt^{1/2}} \sum_{u \in \mathcal{N}_s} f\left(\tfrac{s - \lambda t}{t^{1/2}},\tfrac{X_u(s) - \theta ^s}{t^{1/2}}\right) e^{\theta X_u(s) - s \left(\frac{\theta^2}{2} + 1 \right)} \dd s =  \frac{W_\infty(\theta)}{\sqrt{2\pi\lambda}} \int_{[a,b] \times \R} \!\!\! e^{-\frac{z^2}{2\lambda}} f(r,z) \dd r \dd z \quad \text{a.s.}
\end{equation}

We now assume that $f$ is a continuous compactly supported function on $[a,b] \times \R$. For all $i \leq n$, we set 
\[f_i(r,x) = \ind{r \in [a + i(b-a)/n,a + (i+1)(b-a)/n]} f(a + i(b-a)/n,x).\]
Using the uniform integrability of $f$, for all $n$ large enough, we have $\norme{f - \sum_{j=1}^n f_j}_\infty \leq \epsilon$. As a result, we have
\[
  \limsup_{t \to \infty} \frac{1}{t^{1/2}}\int_{\lambda t + a t^{1/2}}^{\lambda t + bt^{1/2}} \sum_{u \in \mathcal{N}_s} \left|f\left(\tfrac{s - \lambda t}{t^{1/2}},\tfrac{X_u(s) - \theta s}{t^{1/2}}\right) - \sum_{i = 1}^n f_i\left(\tfrac{s - \lambda t}{t^{1/2}},\tfrac{X_u(s) - \theta s}{t^{1/2}}\right)\right| e^{\theta X_u(s) - s \left(\frac{\theta^2}{2} + 1 \right)} \dd s \leq \epsilon W_\infty(\theta) \text{ a.s.}
\]
Therefore, using \eqref{eqn:step1clt}, we obtain
\begin{multline*}  \limsup_{t \to \infty} \Bigg|\frac{1}{t^{1/2}}\int_{\lambda t + a t^{1/2}}^{\lambda t + bt^{1/2}} \sum_{u \in \mathcal{N}_s} f\left(\tfrac{s - \lambda t}{t^{1/2}},\tfrac{X_u(s) - \theta s}{t^{1/2}}\right) e^{\theta X_u(s) - s \left(\frac{\theta^2}{2} + 1 \right)} \dd s\\ - \frac{W_\infty(\theta)}{\sqrt{2\pi\lambda}}\int_{[a,b] \times \R}\!\!\! e^{-\frac{z^2}{2\lambda}} f(r,z) \dd r \dd z \Bigg|
\leq 2 \epsilon W_\infty(\theta) \quad \text{a.s.}
\end{multline*}
Letting $\epsilon \to 0$ therefore proves that \eqref{eqn:cltExpanded} holds for compactly supported continuous functions.

Finally, to complete the proof, we consider a continuous bounded function $f$ on $[a,b] \times \R$. Let $R > 0$, given $\chi_R$ a continuous function on $\R$ such that $\ind{|x|<R} \leq \chi_R(x) \leq \ind{|x| \leq R+1}$, the previous computation shows that
\begin{multline*}
  \lim_{t \to \infty} \frac{1}{t^{1/2}}\int_{\lambda t + a t^{1/2}}^{\lambda t + bt^{1/2}} \sum_{u \in \mathcal{N}_s} \chi_R\left(\tfrac{X_u(s) - \theta s}{t^{1/2}}\right)f\left(\tfrac{s - \lambda t}{t^{1/2}},\tfrac{X_u(s) - \theta s}{t^{1/2}}\right) e^{\theta X_u(s) - s \left(\frac{\theta^2}{2} + 1 \right)} \dd s\\
  =  \frac{W_\infty(\theta)}{\sqrt{2\pi\lambda}} \int_{[a,b] \times \R} e^{-\frac{z^2}{2\lambda}} f(r,z) \chi_R(z) \dd r \dd z \text{ a.s.}
\end{multline*}
Additionally, setting $K = \norme{f}_\infty$, for all $t$ large enough we have
\begin{multline*} 
  \left|\frac{1}{t^{1/2}}\int_{\lambda t + a t^{1/2}}^{\lambda t + bt^{1/2}} \sum_{u \in \mathcal{N}_s} \left( 1 - \chi_R\left(\tfrac{X_u(s) - \theta s}{t^{1/2}}\right)\right)f\left(\tfrac{s - \lambda t}{t^{1/2}},\tfrac{X_u(s) - \theta s}{t^{1/2}}\right) e^{\theta X_u(s) - s \left(\frac{\theta^2}{2} + 1 \right)} \dd s\right|\\
  \leq \frac{K}{t^{1/2}}\int_{\lambda t + a t^{1/2}}^{\lambda t + bt^{1/2}} \sum_{u \in \mathcal{N}_s} \left( 1 - \chi_R\left(\lambda^{1/2}\tfrac{X_u(s) - \theta s}{2 s^{1/2}}\right)\right) e^{\theta X_u(s) - s \left(\frac{\theta^2}{2} + 1 \right)}\dd s,
\end{multline*}
which converges to $\frac{(b-a)W_\infty(\theta)}{\sqrt{2\pi}} \int_\R (1 - \chi_R(\lambda^{1/2}z/2)) e^{-\frac{z^2}{2}} \dd z$ as $t \to \infty$. Thus, letting $t \to \infty$ then $R \to \infty$ completes the proof of this lemma.
\end{proof}

\subsection{The derivative martingale}

The number of particles that travel at the critical speed~$\sqrt{2}$ cannot be counted using the additive martingale (as it converges to $0$ almost surely). In this situation, the appropriate process allowing this estimation is the derivative martingale $(Z_t, t \geq 0)$. Its name comes from the fact that $Z_t$ can be represented as $\left.-\frac{\partial }{\partial \theta} W_t(\theta)\right|_{\theta= \sqrt{2}}$, more precisely
\begin{equation}
  Z_t := \sum_{u \in \mathcal{N}_t} (\sqrt{2} t - X_u(t)) e^{\sqrt{2}X_u(t) - 2t}.
\end{equation}
Despite being a non-integrable signed martingale, it was proved by Lalley and Sellke \cite{LaS87} that it converges to an a.s. positive random variable
\begin{equation}
  \label{eqn:cvDerivative}
  Z_\infty := \lim_{t \to \infty} Z_t \quad \text{ a.s.}
\end{equation}
In the same way that the limit of the additive martingale gives the growth rate of the number of particles moving at speed $\theta$, the derivative martingale gives the growth rate of particles that go at speed $\sqrt{2}$. As a result, it appears in the asymptotic behaviour of the maximal displacement, and results similar to \eqref{eqn:nbPartsSpeedtheta} and \eqref{eqn:cltSpeedTheta} can be found in \cite{Mad,Pai} in the context of branching random walks.

We mention that the limit $Z_\infty$ of the derivative martingale is non-integrable, and that its precise tail has been well-studied. In particular, Bereskycki, Berestycki and Schweinsberg \cite{BBS13} proved that
\begin{equation}
  \label{eqn:tailDerivativeMartingale}
  \P(Z_\infty \geq x) \sim \frac{\sqrt{2}}{x} \text{ as } x \to \infty.
\end{equation}
Similar results were obtained for branching random walks by Buraczewski \cite{Bur09} and Madaule \cite{Mad}. They also obtained a more precise estimate on its asymptotic, that can be expressed in the two following equivalent ways
\begin{align}
  \label{eqn:asymptoticsDerivativeMartingale}
  \E(Z_\infty \ind{Z_\infty \leq x}) = \sqrt{2}\log x + O(1) \quad \text{ as } x \to \infty, \\
  1 - \E(e^{-\lambda Z_\infty}) = \sqrt{2}\lambda \log \lambda + O(\lambda) \quad \text{ as } \lambda \to 0.
  \label{eqn:asymptoticsDerivativeMartingale2}
\end{align}
Maillard and Pain \cite{MaP19} improved on these statements and gave necessary and sufficient conditions for the asymptotic developments of these quantities up to a $o(1)$. We mention that the equivalence between \eqref{eqn:asymptoticsDerivativeMartingale} and \eqref{eqn:asymptoticsDerivativeMartingale2} can be found in \cite[Lemma~8.1]{BIM20+}, which obtain similar necessary and sufficient conditions for the asymptotic development of the Laplace transform of the derivative martingale of the branching random walk under optimal integrability conditions.

\subsection{Maximal displacement of the branching Brownian motion}
\label{subsec:maxdisp}

A large body of work has been dedicated to the study of the maximal displacement of the BBM, defined by $M_t = \max_{u \in \mathcal{N}_t}X_u(t)$. We recall here some estimates related to its study. We begin by observing that the BBM travels in a triangular-shaped array, and that for all $y \geq 0$
\begin{equation}
  \label{eqn:triangleShape}
  \P\left( \exists t \geq 0, u \in \mathcal{N}_t : X_u(t) \geq \sqrt{2} t + y\right) \leq e^{-\sqrt{2} y},
\end{equation}
which shows that with high probability, all particles at time $t$ are smaller than $\sqrt{2}t + y$ in absolute value.

Recall that Lalley and Sellke \cite{LaS87} proved that setting $ m_t = \sqrt{2} t - \frac{3}{2\sqrt{2}} \log t$, the maximal displacement of the BBM centered by $m_t$ converges in distribution to a shifted Gumbel distribution. More precisely, there exists $c_\star > 0$ such that
\begin{equation}
  \label{eqn:cvInLawMAx}
  \lim_{t \to \infty} \P(M_t \leq m_t + z)  = \E\left( \exp\left( - c_\star Z_\infty e^{-\sqrt{2} z} \right) \right).
\end{equation}
An uniform upper bound is also known for the right tail of the maximal displacement. There exists $C > 0$ such that for all $t \geq 0$ and $x \in \R$, we have
\begin{equation}
  \label{eqn:asymptoticsMaximaldisplacement}
  \P(M_t \geq m_t + x) \leq C (1 + x_+) e^{-\sqrt{2} x},
\end{equation}
where $x_+ = \max(x,0)$. This estimate can be obtained by first moment methods, we refer e.g. to \cite{Hu16} for a similar estimate in the branching random walk, which immediately implies a similar bound for the BBM.

In the context of the anomalous spreading, seen from the heuristics in Section \ref{subsec:stateoftheart}, it will also be necessary to use tight estimates on the large deviations of the BBM. These large deviations were first studied by Chauvin and Rouault \cite{ChR88}. Precise large deviations for the maximal displacement were recently obtained in \cite{DMS16,GaH18, BuM19,BBCM19}, proving that for all $\rho > \sqrt{2}$, there exists $C(\rho) \in (0,1)$ such that
\begin{equation}
  \label{eqn:largeDevMax}
  \P(M_t \geq \rho t + y) \sim_{t \to \infty}  \frac{C(\rho)}{\sqrt{2 \pi t} \rho} e^{-(\rho^2/2-1)t} e^{-\rho y - \tfrac{y^2}{2t}},
\end{equation}
uniformly in $|y| \leq r_t$, for all function $r_t = o(t)$.

Additionally, from a simple first moment estimate, one can obtain an uniform upper bound for this large deviations estimate on the maximal displacement.
\begin{lemma}
\label{lem:unifUBLargeDeviationsMax}
For all $\rho > \sqrt{2}$ and $A > 0$, there exists $C > 0$ such that for all $t$ large enough and all $y \geq - A t^{1/2}$, we have
\[
  \P(M_t \geq \rho t + y) \leq \frac{Ce^{- (\rho^2/2-1)t}}{t^{1/2}} e^{-\rho y - \tfrac{y^2}{2t}}. 
\]
\end{lemma}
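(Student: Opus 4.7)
The plan is to derive this bound via a straightforward first moment computation. By Markov's inequality combined with the many-to-one lemma introduced in Section~\ref{sec:mto}, we have
\[
  \P(M_t \geq \rho t + y) \leq \E\bigl[\#\{u \in \mathcal{N}_t : X_u(t) \geq \rho t + y\}\bigr] = e^t \P(B_t \geq \rho t + y),
\]
where $(B_s)_{s \geq 0}$ is a standard Brownian motion, the factor $e^t$ coming from the mean number of particles alive at time $t$ in the BBM.

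Next, I would invoke the Gaussian tail estimate $\P(Z \geq a) \leq \tfrac{1}{a\sqrt{2\pi}} e^{-a^2/2}$, valid for $a > 0$ and $Z \sim \mathcal{N}(0,1)$. The condition $y \geq -A t^{1/2}$ ensures that, for $t$ large enough (depending on $A$ and $\rho$), one has $\rho t + y \geq \rho t / 2 > 0$, so applying the estimate to $B_t / \sqrt{t}$ gives
\[
  \P(B_t \geq \rho t + y) \leq \frac{\sqrt{t}}{(\rho t + y)\sqrt{2\pi}} \, e^{-(\rho t + y)^2/(2t)} \leq \frac{2}{\rho \sqrt{2\pi t}} \, e^{-(\rho t + y)^2/(2t)}.
\]

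Finally, expanding the square $(\rho t + y)^2/(2t) = \rho^2 t /2 + \rho y + y^2/(2t)$ and combining with the prefactor $e^t$, I obtain
\[
  \P(M_t \geq \rho t + y) \leq \frac{2}{\rho \sqrt{2\pi t}} \, e^{-(\rho^2/2 - 1)t} \, e^{-\rho y - y^2/(2t)},
\]
which is the desired inequality with $C = 2/(\rho \sqrt{2\pi})$. There is essentially no obstacle here, the only subtle point being to ensure positivity of $\rho t + y$ uniformly in $y \geq -A t^{1/2}$, which is exactly where the hypothesis ``for $t$ large enough'' (with the threshold depending on $A$ and $\rho$) is used.
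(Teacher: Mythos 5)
Your proof is correct and follows essentially the same route as the paper: Markov's inequality plus the many-to-one lemma to reduce to $e^t\P(B_t \geq \rho t + y)$, then the standard Gaussian tail bound together with the observation that $y \geq -At^{1/2}$ guarantees $\rho t + y \gtrsim t$ for large $t$. The only cosmetic difference is that you track the explicit constant $2/(\rho\sqrt{2\pi})$, whereas the paper absorbs it into an unspecified $C$.
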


This result is based on Markov inequality and classical Gaussian estimates, that appear later in our paper in more complicate settings. We thus give a short proof of this statement.
\begin{proof}
Observe that for $t$ large enough, we have $\rho t + y \geq \delta t$ for some positive constant $\delta$. Then, by Markov inequality, we have
\begin{equation*}
  \P(M_t \geq \rho t +y) = \P\left( \exists u \in \mathcal{N}_t : X_u(t) \geq \rho t + y\right)\\
  \leq \E\left( \sum_{u \in \mathcal{N}_t} \ind{X_u(t) \geq \rho t + y} \right).
\end{equation*}
Using that there are on average $e^{t}$ particles alive at time $t$ and that the displacements of particles are Brownian motions, that are independent of the total number of particles in the process, we have
\[
  \E\left( \sum_{u \in \mathcal{N}_t} \ind{X_u(t) \geq \rho t + y} \right) = e^t \P(B_t \geq \rho t + y).
\]
This fact is often called the many-to-one lemma in the literature (see e.g. \cite[Theorem 1]{ShiBook}). We develop in Section \ref{sec:mto} a multitype versions of this result.

We now use the following well-known asymptotic estimate on the tail of the Gaussian random variable that
\begin{equation}
  \label{eqn:gaussianEstimate}
  \P(B_1 \geq x) \leq \frac{1}{\sqrt{2 \pi} x}e^{-\tfrac{x^2}{2}} \quad \text{ for all $x \geq 0$}.
\end{equation}
This yields
\begin{align*}
  \E\left( \sum_{u \in \mathcal{N}_t} \ind{X_u(t) \geq \rho t + y} \right)&=  e^t \P\left(B_1 \geq \rho t^{1/2} + yt^{-1/2}\right) \leq C t^{-1/2} e^{- \tfrac{(\rho t+y)^2}{2t}}\\
  &\leq C t^{-1/2} e^{t(1 - \rho^2/2)} e^{-\rho y - \tfrac{y^2}{2t}}
\end{align*}
completing the proof.
\end{proof}

\subsection{Decorations of the branching Brownian motion}
\label{subsec:exProc}

We now turn to results related to the extremal process of the BBM. Before stating these, we introduce a general tool that allows the obtention of the joint convergence in distribution of the maximal displacement and the extremal process of a particle system. Denote by $\mathcal{T}$ the set of continuous non-negative bounded functions, with support bounded on the left. The following result can be found in \cite[Lemma~4.4]{BBCM19}.
\begin{proposition}
\label{prop:convergencePointProcess}
Let $\mathcal{P}_n, \mathcal{P}$ be point measures on the real line. We denote by $\max \mathcal{P}_n$ (respectively $\max \mathcal{P}$), the position of the rightmost atom in this point measure. The following statements are equivalent
\begin{enumerate}
  \item $\lim_{n \to \infty} \mathcal{P}_n = \mathcal{P}$ and $\lim_{n \to \infty} \max \mathcal{P}_n = \max \mathcal{P}$ in law.
  \item $\lim_{n \to \infty} (\mathcal{P}_n, \max \mathcal{P}_n) = (\mathcal{P},\max \mathcal{P})$ in law.
  \item for all $\phi \in \mathcal{T}$, $\lim_{n \to \infty} \E\left( e^{-\crochet{\mathcal{P}_n,\phi}} \right) = \E\left( e^{-\crochet{\mathcal{P},\phi}} \right)$.
\end{enumerate}
\end{proposition}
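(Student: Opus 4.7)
My plan is to establish the cycle of implications (2) $\Rightarrow$ (1) $\Rightarrow$ (3) $\Rightarrow$ (2). The first is immediate from the continuous mapping theorem applied to the two coordinate projections, so no real work is needed there.

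For (1) $\Rightarrow$ (3), I would fix $\phi \in \mathcal{T}$ with $\mathrm{supp}(\phi) \subset [a, \infty)$ and, for each $R > a$, introduce a continuous cut-off $\chi_R$ equal to $1$ on $(-\infty, R]$ and to $0$ on $[R+1, \infty)$, so that $\phi \chi_R$ is continuous with compact support. Vague convergence then gives $\E(e^{-\crochet{\mathcal{P}_n, \phi \chi_R}}) \to \E(e^{-\crochet{\mathcal{P}, \phi \chi_R}})$. Since $\phi - \phi \chi_R$ is supported in $[R, \infty)$, the identity $\crochet{\mathcal{P}_n, \phi} = \crochet{\mathcal{P}_n, \phi \chi_R}$ holds on $\{\max \mathcal{P}_n \leq R\}$, and hence
\[
  \bigl|\E(e^{-\crochet{\mathcal{P}_n, \phi}}) - \E(e^{-\crochet{\mathcal{P}_n, \phi \chi_R}})\bigr| \leq \P(\max \mathcal{P}_n > R).
\]
Tightness of $(\max \mathcal{P}_n)$, which follows from its convergence in law, makes this error uniformly small in $n$ once $R$ is large; the analogous bound controls the corresponding error for $\mathcal{P}$, and a triangle inequality concludes.

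The main obstacle is (3) $\Rightarrow$ (2). Since $\mathcal{T}$ contains every non-negative continuous compactly supported function, (3) already yields $\mathcal{P}_n \to \mathcal{P}$ vaguely in law through the classical Laplace functional characterization of vague convergence. To lift this to the joint convergence, my strategy is to probe the maximum with additional test functions in $\mathcal{T}$. For each continuity point $y$ of the law of $\max \mathcal{P}$ and each $\lambda > 0$, I construct $\psi_{y, \lambda} \in \mathcal{T}$ approximating $\lambda \mathbf{1}_{[y, \infty)}$ (zero below $y - \varepsilon$, linear interpolation to $\lambda$ at $y$, constant thereafter). Applied to $\phi + \psi_{y, \lambda}$ for arbitrary $\phi \in \mathcal{T}$, assumption (3) delivers the convergence of $\E(e^{-\crochet{\mathcal{P}_n, \phi} - \crochet{\mathcal{P}_n, \psi_{y, \lambda}}})$. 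Sending $\lambda \to \infty$ and $\varepsilon \to 0$, the factor $e^{-\crochet{\cdot,\, \psi_{y, \lambda}}}$ converges to $\mathbf{1}_{\max \cdot < y}$, and a dominated convergence argument produces
\[
  \E\bigl(e^{-\crochet{\mathcal{P}_n, \phi}} \mathbf{1}_{\max \mathcal{P}_n < y}\bigr) \longrightarrow \E\bigl(e^{-\crochet{\mathcal{P}, \phi}} \mathbf{1}_{\max \mathcal{P} < y}\bigr).
\]
A monotone class argument then shows that such functionals separate laws on the product space of point measures and real numbers, delivering (2). The delicate step is the exchange of limits in $n$, $\lambda$, and $\varepsilon$: it relies crucially on $y$ being a continuity point of $\max \mathcal{P}$, so that the boundary set $\{\max \mathcal{P} = y\}$ carries no mass and the truncation errors vanish uniformly along the sequence.
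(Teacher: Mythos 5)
The paper does not actually prove this proposition: it is imported verbatim from \cite[Lemma~4.4]{BBCM19}, so there is no internal proof to compare against. Your cycle $(2)\Rightarrow(1)\Rightarrow(3)\Rightarrow(2)$ is nevertheless the natural one and matches the strategy of the cited source: $(2)\Rightarrow(1)$ by projection, $(1)\Rightarrow(3)$ by truncating $\phi$ with a cut-off and absorbing the error into $\P(\max \mathcal{P}_n > R)$, and $(3)\Rightarrow(2)$ by probing with test functions of the form $\phi + \lambda\psi$ where $\psi$ approximates an indicator of a half-line. The first two implications are written correctly and completely.

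The one step that is not yet a proof is the exchange of limits in $(3)\Rightarrow(2)$, and ``dominated convergence'' is not the tool that resolves it. For fixed $\lambda,\epsilon$ you have the sandwich $\E\bigl(e^{-\crochet{\mathcal{P}_n,\phi}}\ind{\max \mathcal{P}_n \leq y-\epsilon}\bigr) \leq \E\bigl(e^{-\crochet{\mathcal{P}_n,\phi+\psi_{y,\lambda}}}\bigr) \leq \E\bigl(e^{-\crochet{\mathcal{P}_n,\phi}}\ind{\max \mathcal{P}_n < y}\bigr) + e^{-\lambda}$, and the error you must control uniformly in $n$ is $\P(y-\epsilon < \max \mathcal{P}_n < y)$ --- not a quantity involving the limit law of $\max\mathcal{P}$. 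Invoking ``$\{\max\mathcal{P}=y\}$ carries no mass'' here is circular, since the convergence in law of $\max\mathcal{P}_n$ is part of what is being proved. Two standard repairs: either first run the same probing argument with $\phi=0$ to obtain $\P(\max\mathcal{P}_n\leq y)\to\P(\max\mathcal{P}\leq y)$ at continuity points (the sandwich with $\psi$ alone closes on its own, since both bounding terms are distribution functions of the \emph{same} variable evaluated at $y-\epsilon$ and $y$), and then feed this back into the joint estimate; or apply a Helly-type argument to the monotone functions $y\mapsto \E\bigl(e^{-\crochet{\mathcal{P}_n,\phi}}\ind{\max\mathcal{P}_n\leq y}\bigr)$ and identify the limit at its continuity points via the sandwich. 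With either patch, together with the (correct but unstated) remark that marginal tightness on a product space yields joint tightness, your argument is complete.
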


In other words, considering continuous bounded functions with support bounded on the left instead of continuous compactly supported functions allow us to capture the joint convergence in law of the maximal displacement and the extremal process. We refer to the set $\mathcal{T}$ as the set of test functions, against which we test the convergence of our point measures of interest.

The convergence in distribution of the extremal process of a BBM has been obtained by Aïdékon, Berestycki, Brunet and Shi \cite{ABBS}, and by Arguin, Bovier and Kistler \cite{ABK3}. They proved that setting
\[
  \mathcal{E}_t = \sum_{u \in \mathcal{N}_t} \delta_{X_u(t)-m_t},
\]
this extremal process converges in distribution towards a decorated Poisson point process with intensity $c_\star Z_\infty \sqrt{2} e^{-\sqrt{2} z} \dd z$. The law of the decoration is described in \cite{ABK3} as the limiting distribution of the maximal displacement seen from the rightmost particle, conditioned on being larger than $\sqrt{2} t$ at time $t$. More precisely, they proved that there exists a point measure $\mathcal{D}$ such that
\begin{equation}
  \label{eqn:defineDecoration}
  \lim_{t \to \infty} \E\left(\exp\left( - \sum_{u \in \mathcal{N}_t} \phi(X_u(t) - M_t) \right) \middle|M_t \geq \sqrt{2} t \right) = \E\left( \exp\left( - \crochet{\mathcal{D},\phi} \right) \right)
\end{equation}
for all function $\phi \in \mathcal{T}$. Note that $\mathcal{D}$ is supported on $(-\infty, 0]$ and has an atom at $0$.

The limiting extremal process $\mathcal{E}_\infty$ can be constructed as follows. Let $(\xi_j)_{j \in \N}$ be the atoms of a Poisson point process with intensity $c_\star \sqrt{2} e^{-\sqrt{2} z} \dd z$, and $(\mathcal{D}_j, j \in \N)$ i.i.d. point measures, then set
\[
  \mathcal{E}_\infty = \sum_{j \in \N} \sum_{d \in \mathcal{D}_j} \delta_{\xi_j + d + \frac{1}{\sqrt{2}} \log Z_\infty},
\]
where $\sum_{d \in \mathcal{D}_j}$ represents a sum on the set of atoms of the point measure $\mathcal{D}_j$.

In view of Proposition \ref{prop:convergencePointProcess} and \eqref{eqn:cvInLawMAx}, we can rewrite as follows the convergence in law of the extremal process of the BBM, with simple Poisson computations.
\begin{lemma}
\label{lem:abbs}
For all function $\phi \in \mathcal{T}$, we have
\[
  \lim_{t \to \infty} \E\left( e^{-\crochet{\mathcal{E}_t,\phi}} \right)
  =\E\left( \exp\left(-c_\star  Z_\infty \int (1-e^{-\Psi[\phi](z)}) \sqrt{2} e^{-\sqrt{2} z} \dd z\right) \right),
\]
where we have set $\Psi[\phi] : z \mapsto -\log \E \left( e^{-\crochet{\mathcal{D},\phi(\cdot+z)}} \right)$.
\end{lemma}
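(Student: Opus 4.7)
The statement is essentially a reformulation of the known DPPP convergence of the centred extremal process in terms of Laplace transforms against the richer class $\mathcal{T}$ of test functions, so the plan is to (i) upgrade the known vague convergence to a Laplace functional convergence on $\mathcal{T}$, and (ii) compute the Laplace functional of the limit $\mathcal{E}_\infty$ by a direct Poisson/decoration calculation.

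First I would invoke Proposition~\ref{prop:convergencePointProcess}. We already have the vague convergence $\mathcal{E}_t \to \mathcal{E}_\infty$ in law established in \cite{ABBS,ABK3}, and we have the convergence in law of $M_t - m_t = \max \mathcal{E}_t$ to $\max \mathcal{E}_\infty$ by \eqref{eqn:cvInLawMAx}, noting that the Gumbel-type distribution there is precisely the law of $\max \mathcal{E}_\infty$ because the decoration $\mathcal{D}$ has an atom at $0$ by \eqref{eqn:defineDecoration}. By the equivalence (1)$\Leftrightarrow$(3) in Proposition~\ref{prop:convergencePointProcess}, this yields $\lim_{t \to \infty}\E(e^{-\crochet{\mathcal{E}_t,\phi}}) = \E(e^{-\crochet{\mathcal{E}_\infty,\phi}})$ for every $\phi \in \mathcal{T}$.

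It then remains to evaluate $\E(e^{-\crochet{\mathcal{E}_\infty,\phi}})$ from the explicit representation
\[
  \mathcal{E}_\infty = \sum_{j \in \N} \sum_{d \in \mathcal{D}_j} \delta_{\xi_j + d + \tfrac{1}{\sqrt{2}}\log Z_\infty}.
\]
I would condition on $(Z_\infty,(\xi_j)_{j})$ and use the independence of the decorations $(\mathcal{D}_j)$ and the definition of $\Psi[\phi]$ to obtain
\[
  \E\bigl(e^{-\crochet{\mathcal{E}_\infty,\phi}}\bigm| Z_\infty,(\xi_j)\bigr)
  = \prod_{j} \E\bigl(e^{-\crochet{\mathcal{D}_j,\phi(\cdot+\xi_j+\tfrac{1}{\sqrt{2}}\log Z_\infty)}}\bigr)
  = \prod_{j} e^{-\Psi[\phi](\xi_j+\tfrac{1}{\sqrt{2}}\log Z_\infty)}.
\]
Taking expectation with respect to the Poisson point process $(\xi_j)$ conditionally on $Z_\infty$ and using the Laplace functional of a PPP of intensity $c_\star \sqrt{2}e^{-\sqrt{2}z}\dd z$ gives
\[
  \E\bigl(e^{-\crochet{\mathcal{E}_\infty,\phi}}\bigm| Z_\infty\bigr)
  = \exp\!\left(-c_\star\sqrt{2}\int_{\R}\bigl(1-e^{-\Psi[\phi](z+\tfrac{1}{\sqrt{2}}\log Z_\infty)}\bigr)e^{-\sqrt{2}z}\dd z\right).
\]
The substitution $w=z+\tfrac{1}{\sqrt{2}}\log Z_\infty$ turns $e^{-\sqrt{2}z}\dd z$ into $Z_\infty e^{-\sqrt{2}w}\dd w$, which gives exactly the stated formula after a final integration against the law of $Z_\infty$.

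There is no real obstacle: the only point requiring care is to justify that convergence against the class $\mathcal{T}$ (rather than merely against compactly supported continuous functions) holds, and this is precisely what Proposition~\ref{prop:convergencePointProcess} is designed to give once one has the joint convergence of the extremal process and of its maximum. The Laplace computation for $\mathcal{E}_\infty$ is routine Poisson calculus.
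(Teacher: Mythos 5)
Your proposal is correct and follows exactly the route the paper intends: the paper derives this lemma "in view of Proposition~\ref{prop:convergencePointProcess} and \eqref{eqn:cvInLawMAx} \ldots with simple Poisson computations", which is precisely your combination of the equivalence (1)$\Leftrightarrow$(3), the identification of the law of $\max\mathcal{E}_\infty$ via the atom of $\mathcal{D}$ at $0$, and the explicit Laplace functional of the DPPP with the change of variables absorbing $\frac{1}{\sqrt{2}}\log Z_\infty$. Your write-up merely makes explicit the Poisson calculus that the paper leaves to the reader.
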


In the context of large deviations of BBM, a one-parameter family of point measures, similar to the one defined in \eqref{eqn:defineDecoration} can be introduced. These point measures have first been studied by Bovier and Hartung \cite{BoH14} when considering the extremal process of the time-inhomogeneous BBM. More precisely, they proved that for all $\rho > \sqrt{2}$, there exists a point measure $\mathcal{D}^\rho$ such that
\begin{equation}
  \label{eqn:defineSupercriticalDecoration}
  \lim_{t \to \infty} \E\left(\exp\left( - \sum_{u \in \mathcal{N}_t} \phi(X_u(t) - M_t) \right) \middle|M_t \geq \rho t \right) = \E\left( \exp\left( - \crochet{\mathcal{D}^\rho,\phi} \right) \right).
\end{equation}

In \cite{BBCM19}, an alternative construction of this one parameter family of point measures was introduced, which allows its representation as a point measure conditioned on an event of positive probability instead of a large deviation event of probability decaying exponentially fast in $t$. Let us begin by introducing a few notation. Let $(B_t, t \geq 0)$ be a standard Brownian motion, $(\tau_k, k \geq 1)$ the atoms of an independent Poisson point process of intensity $2$, and $(X^{(k)}_u(t), u \in \mathcal{N}^{(k)}_t, t \geq 0)$ i.i.d. BBMs, which are further independent of $B$ and $\tau$. For $\rho > \sqrt{2}$, we set
\begin{equation}
  \tilde{\mathcal{D}}_t^\rho = \delta_0 + \sum_{k \geq 1} \ind{\tau_k < t} \sum_{u \in \mathcal{N}^{(k)}_{\tau_k}} \delta_{B_{\tau_k} - \rho \tau_k + X_u(\tau_k)} \quad \text{and} \quad \tilde{\mathcal{D}}^\rho = \lim_{t \to \infty} \tilde{\mathcal{D}}_t^\rho.
\end{equation}
In words, the process $\tilde{\mathcal{D}}^\rho$ consists in making one particle starts from $0$ and travels backwards in time according to a Brownian motion with drift $\rho$. This particle gives birth to offspring at rate $2$, each newborn child starting an independent BBM from its current position, forward in time. The point measure $\tilde{\mathcal{D}}^\rho$ then consists of the position of all particles alive at time $0$.

As a first step, we mention the following result, which can be thought of as a spinal decomposition argument with respect to the rightmost particle. This result can be found in \cite[Lemma 2.1]{BBCM19}.
\begin{proposition}
\label{prop:spine}
For all $t \geq 0$ set
\[
  \mathcal{E}^*_t = \sum_{u \in \mathcal{N}_t} \delta_{X_u(t) - M_t}
\]
the extremal process seen from the rightmost position. For all measurable non-negative functions $f,F$, we have
\[
  \E\left( f(M_t-\rho t) F(\mathcal{E}^*_t)  \right) = e^{(1-\rho^2/2)t} \E\left( e^{-\rho B_t} f(B_t) F(\tilde{\mathcal{D}}_t^\rho) \ind{\tilde{\mathcal{D}}^\rho_t((0,\infty)) = 0} \right)
\]
\end{proposition}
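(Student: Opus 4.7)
The plan is to derive the identity from the classical spinal decomposition of the BBM with respect to the additive martingale $W_t(\rho)$ introduced in \eqref{eqn:defW}. Under the change of probability whose density is $W_t(\rho)$, the BBM can be described through a distinguished lineage (the spine), whose trajectory $(S_s, s \in [0,t])$ is a Brownian motion with drift $\rho$; along this spine, new fringe subtrees are grafted at the atoms of an independent Poisson point process of intensity $2$ (the doubling of the original branching rate $1$ coming from the size-biasing of binary branching with offspring mean $2$), each fringe subtree evolving from its grafting position as an independent copy of a standard BBM. This yields the many-to-one type identity
\[
  \E\Big[\sum_{u \in \mathcal{N}_t} e^{\rho X_u(t) - t(1+\rho^2/2)} G(u)\Big] = \E^{(\rho)}[G(\text{spine})],
\]
valid for any non-negative measurable $G$ depending on $u$ together with the full BBM.

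First, using that $M_t$ is a.s. uniquely attained by the continuity of Brownian paths, I rewrite the left-hand side of the identity as
\[
  \E\Big[\sum_{u \in \mathcal{N}_t} e^{-\rho X_u(t)} f(X_u(t) - \rho t) F(\mathcal{E}^*_t) \ind{X_u(t) = M_t}\, e^{\rho X_u(t) - t(1+\rho^2/2)} \cdot e^{t(1+\rho^2/2)}\Big].
\]
Applying the spinal decomposition above with $G(u) = e^{-\rho X_u(t)} f(X_u(t) - \rho t) F(\mathcal{E}^*_t) \ind{X_u(t) = M_t}$ pulls out the factor $e^{t(1+\rho^2/2)}$. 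Writing the spine as $S_s = B_s + \rho s$ with $B$ a standard Brownian motion, we have $S_t - \rho t = B_t$ and $e^{-\rho S_t} = e^{-\rho B_t - \rho^2 t}$, which combines with $e^{t(1+\rho^2/2)}$ to produce the announced prefactor $e^{t(1-\rho^2/2)} e^{-\rho B_t}$ and the term $f(B_t)$.

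Next, I identify the extremal process seen from the spine with the point measure $\tilde{\mathcal{D}}^\rho_t$. Under the spinal decomposition the particles of $\mathcal{N}_t$ are the spine at position $S_t$, together with, for each atom $\sigma_k \in (0,t)$ of the Poisson process of rate $2$, the particles of an independent BBM of duration $t-\sigma_k$ grafted at position $S_{\sigma_k}$. Shifting by $-S_t$, the position relative to the spine of the particle $v$ in the $k$-th fringe subtree is
\[
  (B_{\sigma_k} - B_t) - \rho(t - \sigma_k) + X^{(k)}_v(t - \sigma_k).
\]
Setting $\tau_k := t - \sigma_k$ and $B'_s := B_t - B_{t-s}$, the process $(B'_s, s \in [0,t])$ is still a standard Brownian motion (time reversal), the family $(\tau_k)$ is a Poisson process of rate $2$ on $[0,t]$ (invariance by reflection), and these relative positions become $-B'_{\tau_k} - \rho \tau_k + X^{(k)}_v(\tau_k)$. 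Using that $-B'$ has the same law as a standard BM and relabelling, we recognize exactly the point measure $\tilde{\mathcal{D}}^\rho_t$, with the atom $\delta_0$ contributed by the spine itself. The event $\{\text{spine is the maximum}\}$ then becomes $\{\tilde{\mathcal{D}}^\rho_t((0,\infty)) = 0\}$, which delivers the announced formula.

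The principal obstacle is the justification of the spinal decomposition at the required level of generality: in particular, the rate of fringe grafting being $2$ (rather than $1$) and the joint distribution of spine, Poisson times and fringe BBMs have to be handled carefully, and the combined time reversal of the Brownian motion and of the Poisson process must be checked to produce an independent copy of $(B, (\tau_k))$ in the right orientation. This spinal construction is classical (going back to Chauvin--Rouault and Lyons--Pemantle--Peres) and is exactly the one used in \cite{BBCM19}, from which the precise statement would be quoted; the substance of the proof is then the identification of the extremal process relative to the spine with $\tilde{\mathcal{D}}^\rho_t$ after Girsanov and time reversal.
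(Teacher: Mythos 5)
Your overall route is the right one, and in fact it is the only proof on offer: the paper does not prove Proposition~\ref{prop:spine} itself but quotes it as \cite[Lemma~2.1]{BBCM19}, and the argument there is precisely the spinal decomposition with respect to $W_t(\rho)$ that you describe (spine given by a Brownian motion with drift $\rho$, fringe subtrees grafted at the points of an independent rate-$2$ Poisson process, a.s.\ uniqueness of the argmax, then time reversal from the tip). Your prefactor computation $e^{t(1+\rho^2/2)}e^{-\rho S_t}=e^{t(1-\rho^2/2)}e^{-\rho B_t}$ and your identification of the fringe positions relative to the spine are both correct.

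The gap sits exactly at the step you defer to ``must be checked'': the orientation of the time reversal, and it is not a formality. Writing the forward spine as $S_s=\rho s+B_s$ and setting $\beta_s:=B_{t-s}-B_t$, the process $\beta$ is a standard Brownian motion, $\{t-\sigma_k\}$ is a rate-$2$ Poisson process on $[0,t]$, and the relative positions of the fringe particles are $\beta_{\tau_k}-\rho\tau_k+X^{(k)}_v(\tau_k)$, so you recover $\tilde{\mathcal{D}}^\rho_t$ driven by $\beta$. But $\beta_t=-B_t$, so the endpoint factors become $e^{\rho\beta_t}f(-\beta_t)$, not $e^{-\rho\beta_t}f(\beta_t)$. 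Your substitution ``$-B'$ has the same law as a standard BM'' changes the driving motion of the point measure while leaving $e^{-\rho B_t}f(B_t)$ expressed in the original forward motion; after the substitution these are two different functionals of the same new Brownian motion, and since $\Cov(\beta_t,\beta_{\tau_k})=\tau_k\neq-\tau_k$, the pairs $(\beta_t,\tilde{\mathcal{D}}^\rho_t)$ and $(-\beta_t,\tilde{\mathcal{D}}^\rho_t)$ do not have the same joint law. One can see the discrepancy concretely by restricting both sides to the event that $\mathcal{N}_t$ contains exactly two particles: with the stated convention the Girsanov tilt by $e^{-\rho B_t}$ gives the decoration atom a spurious mean shift $-2\rho\tau$, whereas with the reversed convention the shift cancels and the two-particle marginals match exactly. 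So, taking the definition of $\tilde{\mathcal{D}}^\rho_t$ displayed just above the proposition at face value, the identity your argument actually establishes carries $e^{\rho B_t}f(-B_t)$ (equivalently, one must place $-B_{\tau_k}$ in the atoms of $\tilde{\mathcal{D}}^\rho_t$ in order to keep $e^{-\rho B_t}f(B_t)$). You need either to fix this sign convention or to carry the reversal through explicitly; as written, the last step of your proof asserts an equality of joint laws that is false.
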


It then follows from \eqref{eqn:largeDevMax} and the above proposition that the law $\mathcal{D}^\rho$ can be represented by conditioning the point measure $\tilde{\mathcal{D}}^\rho$, as was obtained in \cite[Theorem 1.1]{BBCM19}.
\begin{lemma}
\label{lem:representationOfPointMeasures}
For all $\rho > \sqrt{2}$,
\begin{itemize}
  \item the constant $C(\rho)$ introduced in \eqref{eqn:largeDevMax} is given by $C(\rho) = \P(\tilde{\mathcal{D}}^\rho((0,\infty))=0)$.
  \item the law of the point measure $\mathcal{D}^\rho$ introduced in \eqref{eqn:defineSupercriticalDecoration} can be constructed as
  \[
    \P(\mathcal{D}^\rho \in \cdot) = \P(\tilde{\mathcal{D}}^\rho \in \cdot | \tilde{\mathcal{D}}^\rho((0,\infty))=0 ).
  \]
\end{itemize}
\end{lemma}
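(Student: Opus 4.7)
The plan is to extract both assertions from the spinal identity of Proposition~\ref{prop:spine} combined with the sharp large-deviation asymptotic~\eqref{eqn:largeDevMax}. The first step is to apply Proposition~\ref{prop:spine} with $f = \indset{[0,\infty)}$ and $F(\nu) = e^{-\crochet{\nu,\phi}}$ for an arbitrary test function $\phi \in \mathcal{T}$, which yields
\[
  \E\left( \ind{M_t \geq \rho t}\, e^{-\crochet{\mathcal{E}^*_t,\phi}}\right) = e^{(1-\rho^2/2)t}\, \E\left( e^{-\rho B_t}\, \ind{B_t \geq 0}\, e^{-\crochet{\tilde{\mathcal{D}}^\rho_t,\phi}}\, \ind{\tilde{\mathcal{D}}^\rho_t((0,\infty)) = 0} \right),
\]
reducing both statements to an asymptotic analysis of the right-hand side as $t \to \infty$.

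The second step is to condition on $B_t$. The factor $e^{-\rho B_t}\ind{B_t \geq 0}$ concentrates its mass on $B_t = O(1/\rho)$ and integrates to $\tfrac{1}{\rho\sqrt{2\pi t}}(1+o(1))$ against the Gaussian density $\tfrac{1}{\sqrt{2\pi t}} e^{-a^2/2t}\dd a$. Writing the Brownian bridge decomposition $B_s \egaldistr \tilde{W}_s + \tfrac{s}{t}a$ given $B_t = a$ with $\tilde W$ a standard Brownian bridge on $[0,t]$, on any fixed compact $s$-interval the correction $\tfrac{s}{t}a$ vanishes as $t\to\infty$, so the conditional law of the backward spinal path converges to that of unconditioned Brownian motion on $[0,\infty)$. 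Combined with the monotone convergence $\tilde{\mathcal{D}}^\rho_t \nearrow \tilde{\mathcal{D}}^\rho$, this should give, for each fixed $a \geq 0$,
\[
  \lim_{t \to \infty} \E\bigl(e^{-\crochet{\tilde{\mathcal{D}}^\rho_t,\phi}}\ind{\tilde{\mathcal{D}}^\rho_t((0,\infty))=0}\,\big|\,B_t = a\bigr) = \E\bigl(e^{-\crochet{\tilde{\mathcal{D}}^\rho,\phi}}\ind{\tilde{\mathcal{D}}^\rho((0,\infty))=0}\bigr),
\]
and a dominated convergence argument in $a$ then yields
\[
  \E\bigl(\ind{M_t \geq \rho t}\,e^{-\crochet{\mathcal{E}^*_t,\phi}}\bigr) \sim \frac{e^{(1-\rho^2/2)t}}{\rho \sqrt{2\pi t}}\, \E\bigl(e^{-\crochet{\tilde{\mathcal{D}}^\rho,\phi}}\ind{\tilde{\mathcal{D}}^\rho((0,\infty))=0}\bigr).
\]

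The final step is to match this with~\eqref{eqn:largeDevMax}. Specializing $\phi \equiv 0$ and comparing with the case $y = 0$ of the large-deviation asymptotic identifies $C(\rho) = \P(\tilde{\mathcal{D}}^\rho((0,\infty))=0)$, which is the first assertion. Dividing the general-$\phi$ asymptotic by the $\phi \equiv 0$ case then gives, for every $\phi \in \mathcal{T}$,
\[
  \lim_{t\to\infty} \E\bigl(e^{-\crochet{\mathcal{E}^*_t,\phi}}\,\big|\,M_t \geq \rho t\bigr) = \E\bigl(e^{-\crochet{\tilde{\mathcal{D}}^\rho,\phi}}\,\big|\,\tilde{\mathcal{D}}^\rho((0,\infty))=0\bigr),
\]
and by the defining property~\eqref{eqn:defineSupercriticalDecoration} together with Proposition~\ref{prop:convergencePointProcess} this identifies the law of $\mathcal{D}^\rho$ with that of $\tilde{\mathcal{D}}^\rho$ conditioned on $\{\tilde{\mathcal{D}}^\rho((0,\infty))=0\}$, which is the second assertion.

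The main obstacle will be the uniformity required for the dominated-convergence step: the pointwise convergence at fixed $a$ must be upgraded to an estimate integrable against $e^{-\rho a}\dd a$, which requires handling the contribution of decorations seeded at Poisson times $\tau_k$ close to $t$, where the bridge correction $\tfrac{\tau_k}{t}a$ is no longer small and the associated BBM runs for a long time. A first-moment bound in the spirit of Lemma~\ref{lem:unifUBLargeDeviationsMax}, applied to each BBM started at $B_{\tau_k}-\rho\tau_k$, should produce a summable upper bound in $k$ that is locally uniform in $a$ and furnishes the required integrable domination.
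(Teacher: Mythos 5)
Your proposal is correct and follows essentially the route the paper itself indicates: the paper gives no detailed proof of this lemma but states that it ``follows from \eqref{eqn:largeDevMax} and the above proposition'' (deferring the details to [BBCM19, Theorem~1.1]), and your argument --- apply Proposition~\ref{prop:spine} with $f=\indset{[0,\infty)}$ and $F(\nu)=e^{-\crochet{\nu,\phi}}$, pass to the limit in the tilted Gaussian integral, and match the resulting asymptotic against \eqref{eqn:largeDevMax} and \eqref{eqn:defineSupercriticalDecoration} --- is exactly that derivation fleshed out. The one technical point you flag (controlling decorations seeded at Poisson times $\tau_k$ close to $t$, where the bridge conditioning matters) is indeed where the real work lies, and it is the same estimate the paper borrows from [BBCM19, Lemma~3.4] in the proof of Lemma~\ref{lem:jointLargeDev}; note only that the dominated-convergence step itself is immediate since the conditional expectation is bounded by $1$ and $e^{-\rho a}$ is integrable --- the first-moment bound is needed for the pointwise convergence at fixed $a$, not for the domination.
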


We end this section with an uniform estimate on the Laplace transform of the extremal process of the BBM, that generalizes both \eqref{eqn:largeDevMax} and \eqref{eqn:defineSupercriticalDecoration}.
\begin{lemma}
\label{lem:jointLargeDev}
Let $\rho > \sqrt{2}$, we set
\[
  \mathcal{E}_t^\rho(x) = \sum_{u \in \mathcal{N}_t} \delta_{X_u(t) - \rho t + x}.
\]
Let $A > 0$, for all $\phi \in \mathcal{T}$, we have
\[
  \E\left( 1- e^{-\crochet{\mathcal{E}_t^\rho(x), \phi}} \right)  = C(\rho)\frac{e^{\left(1 - \rho^2/2\right)t}}{\sqrt{2\pi t}}  e^{\rho x - \tfrac{x^2}{2t}} \int e^{-\rho z} \left(1 - e^{-\Psi^\rho[\phi](z)}\right) \dd z (1 + o(1)),
\]
uniformly in $|x| \leq A t^{1/2}$, as $t \to \infty$, where $\Psi^\rho[\phi] : z \mapsto  -\log \E \left( e^{-\crochet{\mathcal{D}^\rho,\phi(\cdot+z)}} \right)$.
\end{lemma}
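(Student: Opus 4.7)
The plan is to combine the spinal representation of Proposition~\ref{prop:spine}, the conditional characterization of $\mathcal{D}^\rho$ from Lemma~\ref{lem:representationOfPointMeasures}, and a Gaussian computation, and to treat the offset $x$ perturbatively.

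First, I extend Proposition~\ref{prop:spine} (by a standard monotone class argument starting from product test functionals $f \otimes F$) to arbitrary measurable $G(y,\nu) \ge 0$:
\[
\E\bigl(G(M_t - \rho t,\mathcal{E}_t^*)\bigr) = e^{(1-\rho^2/2)t}\,\E\bigl(e^{-\rho B_t}\,G(B_t,\tilde{\mathcal{D}}_t^\rho)\,\mathbbm{1}_{\tilde{\mathcal{D}}_t^\rho((0,\infty))=0}\bigr).
\]
Applying this with $G(y,\nu) = 1 - \exp(-\sum_{d\in\nu}\phi(d + y + x))$, and noting that $\langle \mathcal{E}_t^\rho(x),\phi\rangle = \sum_{u}\phi((X_u(t)-M_t)+(M_t-\rho t)+x)$, then integrating out $B_t$ against its Gaussian density and making the change of variable $z = B_t + x$, I obtain
\[
\E\bigl(1 - e^{-\langle \mathcal{E}_t^\rho(x),\phi\rangle}\bigr) = \frac{e^{(1-\rho^2/2)t}\,e^{\rho x - x^2/(2t)}}{\sqrt{2\pi t}}\int e^{-\rho z}\,\Phi_t(z,x)\,\dd z,
\]
where
\[
\Phi_t(z,x) := e^{xz/t - z^2/(2t)}\,\E\bigl[(1 - e^{-\langle \tilde{\mathcal{D}}_t^\rho,\phi(\cdot + z)\rangle})\,\mathbbm{1}_{\tilde{\mathcal{D}}_t^\rho((0,\infty))=0}\bigm|B_t = z - x\bigr].
\]
Thus the lemma reduces to showing $\int e^{-\rho z}\Phi_t(z,x)\dd z \to C(\rho)\int e^{-\rho z}(1 - e^{-\Psi^\rho[\phi](z)})\dd z$, uniformly in $|x|\le At^{1/2}$.

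For the pointwise limit at fixed $z$: the exponential factor $e^{xz/t - z^2/(2t)} \to 1$ uniformly in $|x|\le At^{1/2}$ since $|xz/t|\le A|z|/t^{1/2}$. For the conditional expectation, $B$ given $B_t = z - x$ is a Brownian bridge with drift $(z-x)/t = O(t^{-1/2})$, so on any fixed interval $[0,T]$ it converges in law to a standard Brownian motion; since $\rho > \sqrt 2$, the atoms of $\tilde{\mathcal{D}}_t^\rho$ coming from Poisson times $\tau_k > T$ can be made negligible by choosing $T$ large (their contribution drifts to $-\infty$ exponentially fast via \eqref{eqn:triangleShape}), so the conditioning asymptotically leaves the law of $\tilde{\mathcal{D}}_t^\rho$ invariant. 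Combined with Lemma~\ref{lem:representationOfPointMeasures}, this gives $\Phi_t(z,x)\to C(\rho)(1 - e^{-\Psi^\rho[\phi](z)})$. For domination, the support property of $\phi\in\mathcal{T}$ (vanishing on $(-\infty,-K)$) together with the constraint $\tilde{\mathcal{D}}_t^\rho\subset(-\infty,0]$ on the event considered forces $\Phi_t(z,x) = 0$ for $z < -K$; for $z \geq -K$ the factor $e^{xz/t}$ is bounded by $e^{A|z|/t^{1/2}}$, which is absorbed by $e^{-(\rho-\eta)z}$ for any small $\eta>0$, providing an integrable majorant. Dominated convergence then yields the claimed asymptotic.

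The main obstacle is making the pointwise convergence of $\Phi_t(z,x)$ \emph{uniform} in $|x|\leq At^{1/2}$: the conditioning event $\{B_t = z - x\}$ shifts with $x$ on the Brownian scale, and the joint dependence between $B$ and $\tilde{\mathcal{D}}_t^\rho$ through the values $B_{\tau_k}$ must be controlled. The argument proceeds via a truncation at a large but fixed $T$: on $\{\tau_k\leq T\ \forall k\leq N\}$, the Brownian bridge is close in total variation to a Brownian motion with vanishing drift, uniformly in the endpoint within range $O(t^{1/2})$; on the complementary event, a first-moment bound using \eqref{eqn:triangleShape} and Lemma~\ref{lem:unifUBLargeDeviationsMax} handles the tail contribution to the decoration. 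These uniform Gaussian/BBM estimates are the key technical ingredient, the rest being bookkeeping.
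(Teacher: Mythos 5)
Your proposal is correct and follows essentially the same route as the paper: the spinal decomposition of Proposition~\ref{prop:spine} (extended to joint functionals), a Gaussian change of variables producing the prefactor $e^{\rho x - x^2/(2t)}/\sqrt{2\pi t}$, the convergence of the conditioned decoration $\tilde{\mathcal{D}}^\rho_t$ under the Brownian bridge to $\tilde{\mathcal{D}}^\rho$ uniformly in $|x|\le At^{1/2}$ (which the paper delegates to the computations of \cite[Lemma 3.4]{BBCM19}), and the identification of the limit via Lemma~\ref{lem:representationOfPointMeasures}. The only (cosmetic) difference is that the paper first truncates on $\{M_t-\rho t + x\le L\}$ using Lemma~\ref{lem:unifUBLargeDeviationsMax} and lets $L\to\infty$ at the end, whereas you integrate over the whole half-line $z\ge -K$ directly with the majorant $e^{-(\rho-\eta)z}$; both amount to the same dominated-convergence step.
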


\begin{proof}
Let $L > 0$, recall from Lemma \ref{lem:unifUBLargeDeviationsMax} that
\begin{equation*}
  \P(M_t \geq \rho t - x + L) \leq C t^{-1/2} e^{t(1 - \rho^2/2)} e^{\rho x - \tfrac{x^2}{2t}} e^{- \rho L}.
\end{equation*}
Thus, as $\phi$ is non-negative, we have
\begin{equation}
  \label{eqn:ubMax}
  0 \leq \E\left( 1- e^{-\crochet{\mathcal{E}_t^\rho(x), \phi}} \right) -  \E\left( \left(1- e^{-\crochet{\mathcal{E}_t^\rho(x), \phi}}\right) \ind{M_t \leq \rho t - x + L} \right) \leq  C t^{-1/2} e^{t(1 - \rho^2/2)} e^{\rho x - \tfrac{x^2}{2t}} e^{- \rho L}.
\end{equation}
We also recall that the support of $\phi$ is bounded on the left, i.e. is included on $[R,\infty)$ for some $R \in \R$. Observe then that $e^{-\crochet{\mathcal{E}_t^\rho(x), \phi}} = 1$ on the event $\{M_t \leq \rho t - x + R\}$.

We now use Proposition \ref{prop:spine} to compute
\begin{multline*}
  \E\left( \left(1- e^{-\crochet{\mathcal{E}_t^\rho(x), \phi}}\right) \ind{M_t - \rho t + x \in [R,L]} \right)\\
  = e^{t(1 - \rho^2/2)} \E\left( e^{\rho B_t} \left( 1 - e^{-\crochet{\tilde{\mathcal{D}}^\rho_t, \tau_{B_t + x} \phi}} \right)\ind{x + B_t \in [R,L], \tilde{\mathcal{D}}^\rho_t((0,\infty)) = 0} \right),
\end{multline*}
where $\tau_z(\phi)(\cdot) = \phi(z + \cdot)$. Therefore, setting
\[
  G_t(x,z) = \E\left( \left( 1 - e^{-\crochet{\tilde{\mathcal{D}}^\rho_t, \tau_{z} \phi}} \right)\ind{\tilde{\mathcal{D}}^\rho_t((0,\infty)) = 0} \middle| B_t = z-x \right),
\]
we have
\begin{equation}
  \label{eqn:estimate}
  e^{t(\rho^2/2-1)} \sqrt{2\pi t} e^{-\rho x + \tfrac{x^2}{2t}}\E\left( \left(1- e^{-\crochet{\mathcal{E}_t^\rho(x), \phi}}\right) \ind{M_t - \rho t + x \in [R,L]} \right) = \int_{R}^{L} e^{-\rho y + o(t^{-1/2})} G_t(x, y) \dd y,
\end{equation}
with the $o(t^{-1/2})$ term being uniform in $|x| \leq A t^{1/2}$.

With the same computations as in the proof of \cite[Lemma 3.4]{BBCM19}, we obtain
\begin{align*}
  \lim_{t \to \infty} \sup_{|x|\leq A t^{1/2}} G_t(x,y) = \lim_{t \to \infty} \inf_{|x| \leq A t^{1/2}} G_t(x, y) &= \E\left( \left( 1 - e^{-\crochet{\tilde{\mathcal{D}}^\rho, \tau_{y} \phi}} \right)\ind{\tilde{\mathcal{D}}^\rho((0,\infty)) = 0} \right)\\
  &= C(\rho) \E \left( 1 - e^{-\crochet{\mathcal{D}^\rho, \tau_{y} \phi}} \right),
\end{align*}
using the construction of $\mathcal{D}^\rho$ given in Lemma \ref{lem:representationOfPointMeasures}. Therefore, using \eqref{eqn:ubMax} and applying the dominated convergence theorem, equation \eqref{eqn:estimate} yields
\begin{multline*}
  \limsup_{t \to \infty} \sup_{|x| \leq A t^{1/2}} \left| e^{t(\rho^2/2-1)} \sqrt{2\pi t} e^{-\rho x + \tfrac{x^2}{2t}}\E\left( \left(1- e^{-\crochet{\mathcal{E}_t^\rho(x), \phi}}\right) \right) - C(\rho) \int_\R  e^{-\rho y} \E \left( 1 - e^{-\crochet{\mathcal{D}^\rho, \tau_{y} \phi}} \right) \dd y \right| \\
  \leq C e^{- \rho L},
\end{multline*}
which, letting $L \to \infty$, completes the proof.
\end{proof}

\begin{remark}
Note that applying Lemma \ref{lem:jointLargeDev} to function $\phi(z) = \ind{z \geq 0}$ yields \eqref{eqn:largeDevMax}, and up simple computations, this lemma can also be used to obtain \eqref{eqn:defineSupercriticalDecoration}.
\end{remark}

\section{Multitype many-to-one lemmas}
\label{sec:mto}

The many-to-one lemma is an ubiquitous result in the study of branching Brownian motions. This result links additive moments of the BBM with Brownian motion estimates. We first recall the classical version of this lemma, before giving a multitype version that applies to our process.

Let $(X_u(t), u \in \mathcal{N}_t)$ be a standard BBM with branching rate $1$. The classical many-to-one lemma can be tracked back at least to the work of Kahane and Peyrière \cite{KaP,Pey} on multiplicative cascades. It can be expressed as follows: for all $t \geq 0$ and measurable non-negative functions $f$, we have
\begin{equation}
  \label{eqn:many-to-one}
  \E\left( \sum_{u \in \mathcal{N}_t} f(X_u(s), s \leq t) \right) = e^{t} \E(f(B_s, s \leq t)),
\end{equation}
where $B$ is a standard Brownian motion.

Recall that $\mathcal{N}^1_t$ (respectively $\mathcal{N}^2_t$) is the set of particles of type $1$ (resp. type $2$) alive at time $t$. Note that the process $(X_u(t), u \in \mathcal{N}^1_t)_{t \geq 0}$ is a BBM with branching rate $\beta$ and diffusion $\sigma^2$. Thus in view of \eqref{eqn:scaling}, \eqref{eqn:many-to-one} implies that for all measurable non-negative function $f$
\[
  \E\left( \sum_{u \in \mathcal{N}_t^1} f(X_u(s), s \leq t) \right) = e^{\beta t} \E(f(\sigma B_s, s \leq t)).
\]

Similarly, writing $\P^{(2)}$ the law of the process starting from a single particle of type $2$. As this particle behaves as in a standard BBM and only gives birth of particles of type $2$, this process again is a BBM, therefore
\[
  \E^{(2)}\left( \sum_{u \in \mathcal{N}_t^2} f(X_u(s), s \leq t) \right) = e^{t} \E(f(B_s, s \leq t)),
\]
writing $\E^{(2)}$ for the expectation associated to $\P^{(2)}$.

The main aim of this section is to prove the following result, which allows to represent an additive functional of particles of type $2$ appearing in the multitype BBM by a variable speed Brownian motion.
\begin{proposition}
\label{prop:manytoone2type}
For all measurable non-negative function $f$, we have
\[  \E\left( \sum_{u \in \mathcal{N}_t^2} f((X_u(s), s \leq t), T(u)) \right)= \alpha \int_0^t e^{\beta s + (t-s)} \E\left( f( (\sigma B_{u \wedge s} + (B_{u}-B_{u \wedge s}), u \leq t),s) \right) \dd s,
\]
where we recall that $T(u)$ is the birth time of the first ancestor of type $2$ of $u$.
\end{proposition}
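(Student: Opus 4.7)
\bigskip

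\noindent\textbf{Proof plan for Proposition~\ref{prop:manytoone2type}.}

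The plan is to condition on the birth event of the earliest type-$2$ ancestor and then apply the single-type many-to-one lemma twice: once to the type-$1$ subtree up to time $s$, and once to the type-$2$ subtree between times $s$ and $t$. The birth times of type-$2$ particles from a given type-$1$ particle form a Poisson process of intensity $\alpha$, independent of the motion, and each such newborn type-$2$ particle initiates an independent BBM of type~$2$ from the current position of its type-$1$ parent.

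First, I would partition the sum $\sum_{u \in \mathcal{N}^2_t}$ according to the pair $(s,v)$ where $T(u)=s$ is the birth time of the earliest type-$2$ ancestor and $v \in \mathcal{N}^1_{s-}$ is its type-$1$ parent. Writing $\mathcal{B}$ for the point process on $[0,t]\times\{\text{type-}1\text{ particles}\}$ of such type-$2$ birth events, Campbell's formula for Poisson processes (conditionally on the type-$1$ subtree) gives
\[
  \E\!\left(\sum_{u \in \mathcal{N}^2_t} f((X_u(r), r \leq t), T(u)) \right)
  = \alpha \int_0^t \E\!\left(\sum_{v \in \mathcal{N}^1_s} G_{t-s}(s,(X_v(r))_{r \leq s})\right) \dd s,
\]
where, for a path $\gamma:[0,s] \to \R$,
\[
  G_{t-s}(s,\gamma) = \E\!\left(\sum_{w \in \tilde{\mathcal{N}}^2_{t-s}} f\bigl((\gamma(r \wedge s) + \tilde X_w((r-s)_+))_{r \leq t},\, s \bigr) \right),
\]
and $(\tilde X_w(r), w \in \tilde{\mathcal{N}}^2_r)$ is an independent type-$2$ BBM started at $0$.

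Next, I would apply the single-type many-to-one lemma \eqref{eqn:many-to-one} to the inner BBM. Since particles of type~$2$ move as standard Brownian motions and branch at rate $1$, we have, for fixed $\gamma$,
\[
  G_{t-s}(s,\gamma) = e^{(t-s)} \E\!\left( f\bigl((\gamma(r \wedge s) + \tilde{B}_{(r-s)_+})_{r \leq t}, s\bigr) \right),
\]
for $\tilde B$ an independent standard Brownian motion. Substituting this and applying the scaled many-to-one lemma to the type-$1$ population (which, by the scaling \eqref{eqn:scaling}, is a BBM with branching rate $\beta$ and diffusion $\sigma^2$, so the many-to-one trajectory is $(\sigma B_r)_{r\leq s}$ with prefactor $e^{\beta s}$) yields
\[
  \E\!\left(\sum_{v \in \mathcal{N}^1_s} G_{t-s}(s,(X_v(r))_{r \leq s})\right)
  = e^{\beta s} e^{(t-s)} \E\!\left( f\bigl((\sigma B_{r \wedge s} + (B_r - B_{r\wedge s}))_{r \leq t}, s\bigr)\right),
\]
where I have merged the type-$1$ Brownian motion $\sigma B$ on $[0,s]$ and an independent Brownian increment $B_r - B_{r\wedge s}$ on $[s,t]$ into a single variable-speed Brownian motion. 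Integrating over $s\in[0,t]$ gives the claimed identity.

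The main subtlety, and the step I would write most carefully, is the Poissonian decomposition of type-$2$ births: one has to justify that the compound point process of pairs $(s,v)$ has intensity $\alpha\,\dd s$ against the counting measure on $\mathcal{N}^1_{s-}$, and that conditionally on this point process and on the trajectories of type-$1$ particles, the BBMs initiated by each newborn type-$2$ particle are independent. This is a standard consequence of the branching property of the multitype system together with the thinning/superposition properties of Poisson processes, and once stated carefully the remainder is a direct application of Fubini and the single-type many-to-one lemma.
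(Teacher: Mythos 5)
Your proposal is correct and follows essentially the same route as the paper: decompose at the first type-$2$ ancestor, use the conditional Poisson structure of type-$2$ births (Campbell's formula given $\mathcal{F}^1$, the paper's Lemma~\ref{lem:poissonProc} and Corollary~\ref{cor:poissonSum}) together with the branching property, then apply the single-type many-to-one lemma separately to the type-$2$ subtree and the type-$1$ tree. The only cosmetic difference is that the paper works with product test functions $f\cdot g$ and closes with the monotone class theorem, whereas you handle a general path functional directly.
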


To prove this result, we begin by investigating the set $\mathcal{B}$ of particles of type $2$ that are born from a particle of type $1$, that can be defined as
\[
  \mathcal{B} := \left\{ u \in \cup_{t \geq 0} \mathcal{N}^2(t) : T(u) = b_u \right\}.
\]
We observe that $\mathcal{B}$ can be thought of as a Poisson point process with random intensity.
\begin{lemma}
\label{lem:poissonProc}
Conditionally on $\mathcal{F}^1 = \sigma(X_u(t), u \in \mathcal{N}^1_t, t \geq 0)$, the point measure $\displaystyle \sum_{u \in \mathcal{B}} \delta_{(X_u(s), s \leq T(u))}$ is a Poisson point process with intensity
$\displaystyle \alpha \dd t \otimes \sum_{u \in \mathcal{N}^1_t} \delta_{(X_u(s), s \leq t)}$.
\end{lemma}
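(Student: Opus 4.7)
The plan is to rely on the Poisson process description of type-2 birth events that is built into the construction of the multitype BBM, together with the fact that an independent superposition of Poisson point processes is again Poisson with summed intensity. In the canonical construction of the process, each type-1 particle $v$, alive on an interval $[b_v, d_v)$, carries an independent exponential clock of rate $\alpha$ that triggers the emission of a type-2 child; equivalently, the set $\mathcal{P}_v$ of times at which $v$ produces a type-2 offspring is a Poisson point process on $[b_v, d_v)$ of intensity $\alpha\, \dd t$. These $\alpha$-clocks do not enter the construction of the trajectories of type-1 particles nor of their $\beta$-branching events, so the family $(\mathcal{P}_v)_v$ is jointly independent and independent of $\mathcal{F}^1$.

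Next I would identify $\mathcal{B}$ with the set of pairs $(v,t)$ such that $v$ is a type-1 particle and $t \in \mathcal{P}_v$: each such pair corresponds to a unique $u \in \mathcal{B}$, with $T(u) = t$ and with ancestral trajectory $(X_u(s), s \leq T(u))$ equal to that of $v$ up to time $t$. This yields the rewriting
\[
  \sum_{u \in \mathcal{B}} \delta_{(X_u(s),\, s \leq T(u))} \;=\; \sum_{v} \sum_{t \in \mathcal{P}_v} \delta_{(X_v(s),\, s \leq t)},
\]
where the outer sum ranges over all type-1 particles (past and present).

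Then I would conclude by a conditional Poisson argument. Conditionally on $\mathcal{F}^1$, the map $t \mapsto (X_v(s), s \leq t)$ is deterministic, so the inner sum is the image of the Poisson point process $\mathcal{P}_v$ under this map and is itself a Poisson point process on path space with intensity $\alpha\, \dd t$ supported on the trajectory of $v$ during $[b_v, d_v)$. The mutual independence of the $\mathcal{P}_v$ persists conditionally on $\mathcal{F}^1$, so the superposition is a Poisson point process whose intensity is the sum of the individual intensities. Finally, reindexing at each time $t$ by the particles of $\mathcal{N}^1_t$ (each $u \in \mathcal{N}^1_t$ is the restriction of a unique ancestor $v$ with $t \in [b_v,d_v)$, and $(X_u(s), s\leq t)$ coincides with the ancestral trajectory of $v$ up to time $t$) produces exactly the announced intensity $\alpha\, \dd t \otimes \sum_{u \in \mathcal{N}^1_t} \delta_{(X_u(s),\, s \leq t)}$.

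The only real care needed is in this last reindexing step: one must verify that as $v$ ranges over all type-1 ancestors and $t$ over $[b_v, d_v)$, the pairs $(v,t)$ are in bijection with pairs $(u,t)$ satisfying $u \in \mathcal{N}^1_t$, so that no trajectory is double-counted. This is immediate from the tree structure of type-1 particles once a convention for labelling ancestral lineages is fixed, but it deserves to be stated explicitly to make the identification of intensities fully rigorous.
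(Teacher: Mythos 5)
Your argument is correct and is exactly the one the paper gives (the paper's proof is just a two-sentence version of it): the rate-$\alpha$ birth clocks of the type-$1$ particles are independent Poisson processes, independent of $\mathcal{F}^1$, and the claim follows from mapping each clock onto the (conditionally deterministic) ancestral trajectory and applying the superposition principle. Your extra care about the reindexing of lineages is a welcome clarification but not a departure from the paper's route.
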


\begin{proof}
This is a straightforward consequence of the definition of the two-type BBM and the superposition principle for Poisson process. Over its lifetime, a particle of type $1$ gives birth to particles of type $2$ according to a Poisson process with intensity $\alpha$, and the trajectory leading to the newborn particle at time $t$ is exactly the same as the trajectory of its parent particle up to time $t$.
\end{proof}

A direct consequence of the above lemma is the following applications of Poisson summation formula.
\begin{corollary}
\label{cor:poissonSum}
For all measurable non-negative function $f$, we have
\begin{align}
  \E\left(\sum_{u \in \mathcal{B}} f(X_u(s), s \leq T(u)) \right) &= \alpha \int_0^\infty e^{\beta t} \E(f(\sigma B_s, s \leq t)) \dd t,\\
  \E\left( \exp\left( - \sum_{u \in \mathcal{B}} f(X_u(s), s \leq T(u)) \right) \right) &= \E\left( \exp\left( - \alpha \int_0^\infty \sum_{u \in \mathcal{N}_t^1} 1 - e^{-f(X_u(s), s \leq t)} \dd t \right) \right) 
\end{align}
\end{corollary}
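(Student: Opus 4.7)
The plan is to condition on $\mathcal{F}^1$ and apply the two classical formulas for Poisson point processes provided by Lemma~\ref{lem:poissonProc}: Campbell's formula for the first identity and the Laplace functional formula for the second. Both statements then reduce to computing an unconditional expectation of a functional of the BBM of particles of type~$1$, which is handled by the many-to-one lemma written down above for that process, together with Fubini's theorem.

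For the first identity, conditionally on $\mathcal{F}^1$ the sum $\sum_{u \in \mathcal{B}} f(X_u(s), s \leq T(u))$ is an integral against a Poisson point process with intensity $\alpha\, \dd t \otimes \sum_{u \in \mathcal{N}^1_t}\delta_{(X_u(s), s \leq t)}$, whose conditional expectation is
\[
\alpha \int_0^\infty \sum_{u \in \mathcal{N}^1_t} f(X_u(s), s \leq t)\, \dd t.
\]
Taking unconditional expectation, swapping sum and integral by Fubini--Tonelli, and applying the many-to-one identity $\E\bigl(\sum_{u \in \mathcal{N}^1_t} f(X_u(s), s \leq t)\bigr) = e^{\beta t}\E(f(\sigma B_s, s \leq t))$ gives exactly the claimed formula.

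For the second identity, the Laplace functional of the same conditional Poisson point process yields
\[
\E\!\left(\exp\!\left(-\sum_{u \in \mathcal{B}} f(X_u(s), s \leq T(u))\right) \,\middle|\, \mathcal{F}^1\right) = \exp\!\left(-\alpha \int_0^\infty \sum_{u \in \mathcal{N}^1_t} \bigl(1 - e^{-f(X_u(s), s \leq t)}\bigr)\, \dd t\right),
\]
and taking expectations finishes the proof. There is no real obstacle here: the only mild points to check are that $f$ is $\mathcal{F}^1$-measurable after conditioning (which is built into Lemma~\ref{lem:poissonProc}, since $\mathcal{B}$ is generated on top of the type~$1$ skeleton) and that Fubini applies, which follows from non-negativity of $f$.
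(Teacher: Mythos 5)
Your proposal is correct and follows essentially the same route as the paper's proof: condition on $\mathcal{F}^1$, apply Campbell's formula (resp.\ the exponential formula) for the conditional Poisson point process of Lemma~\ref{lem:poissonProc}, then use Fubini--Tonelli and the many-to-one lemma for the type-$1$ BBM. No gaps.
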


\begin{proof}
We denote by $\mathcal{F}^1 = \sigma(X_u(s), u \in \mathcal{N}^1_s, s \geq 0)$ the filtration generated by all particles of type $1$. We can compute
\[
  \E\left(\sum_{u \in \mathcal{B}} f(X_u(s), s \leq T(u)) \middle| \mathcal{F}^1 \right) = \alpha \int_0^\infty \sum_{u \in \mathcal{N}^1_t} f(X_u(s), s \leq t) \dd t
\]
using Lemma \ref{lem:poissonProc}. Then using Fubini's theorem and \eqref{eqn:many-to-one}, we conclude that
\[
\E\left(\sum_{u \in \mathcal{B}} f(X_u(s), s \leq T(u)) \right) = \alpha \int_0^\infty e^{\beta t} \E(f(\sigma B_s, s \leq t)) \dd t.
\]

Similarly, using the exponential Poisson formula, we have
\[
  \E\left(\exp\left(-\sum_{u \in \mathcal{B}} f(X_u(s), s \leq T(u))\right) \middle| \mathcal{F}^1 \right) = \exp\left( - \alpha \int_0^\infty \sum_{u \in \mathcal{N}_t^1} 1 - e^{-f(X_u(s), s \leq t)}\dd t \right).
\]
Taking the expectation of this formula completes the proof of this corollary.
\end{proof}

We now turn to the proof of the multitype many-to-one lemma.
\begin{proof}[Proof of Proposition \ref{prop:manytoone2type}]
Let $f,g$ be two measurable bounded functions. For any $u,u'$ particles in the BBM, we write $u' \succcurlyeq u$ to denote that $u'$ is a descendant of $u$. We compute
\begin{align*}
  &\E\left( \sum_{u \in \mathcal{N}_t^2} f(X_u(s), s \leq T(u)) g(X_u(s),  s \in [T(u),t]) \right)\\
  =  &\E\left( \sum_{u \in \mathcal{B}} \ind{T(u) \leq t} f(X_u(s), s \leq T(u)) \sum_{\substack{u' \in \mathcal{N}^2_t\\u' \succcurlyeq u}}  g(X_{u'}(s), s \in [T(u),t]) \right)\\
  = & \E\left( \sum_{u \in \mathcal{B}} \ind{T(u) \leq t} f(X_u(s), s \leq T(u)) \phi(T(u), X_u(T(u)))\right),
\end{align*}
using the branching property for the BBM: every particle $u \in \mathcal{B}$ starts an independent BBM from time $T(u)$ and position $X_u(T(u))$. Here, we have set for $x \in \R$ and $s \geq 0$
\begin{align*}
  \phi(s,x)
  &= \E^{(2)}\left( \sum_{u \in \mathcal{N}^2_{t-s}} g\left(x + X_u(r - s),r \in [s,t]\right)  \right)\\
  &= e^{t-s} \E\left( g\left(x + B_{r-s}, r \in [s,t]\right) \right),
\end{align*}
by the standard many-to-one lemma. Additionally, by Corollary~\ref{cor:poissonSum}, we have
\begin{align*}
  &\E\left( \sum_{u \in \mathcal{N}_t^2} f(X_u(s), s \leq T(u)) g(X_u(s),  s \in [T(u),t]) \right)\\
  =& \alpha \int_0^t e^{\beta s} \E(f(\sigma B_r, r \leq s) \phi(s,\sigma B_s) ) \dd s\\
  =& \alpha \int_0^t e^{\beta s + t-s} \E(f(\sigma B_r, r \leq s)  g\left(\sigma B_s + (B_{r}-B_s), r \in [s,t])\right) \dd s.
\end{align*}
Using the monotone class theorem, the proof of Proposition \ref{prop:manytoone2type} is now complete.
\end{proof}

\section{Proof of Theorem \ref{thm:mainII}}
\label{sec:proofII}

We assume in this section that $(\beta,\sigma^2) \in \mathcal{C}_{II}$, i.e. that either $\sigma^2 > 1$ and $\sigma^2 < \frac{1}{\beta}$ or $\sigma^2 \leq 1$ and $\sigma^2 < 2 - \beta$. In that case, we show that the extremal process is dominated by the behaviour of particles of type $2$ that are born at the beginning of the process. The main steps of the proof of Theorem \ref{thm:mainII} are the following:
\begin{enumerate}
  \item We show that for all $A > 0$, there exists $R>0$ such that with high probability, every particle $u$ of type~$2$ to the right of $m^{(II)}_t - A$ satisfy $T(u) \leq R$.
  \item We use the convergence in distribution of the extremal process of a single-type branching Brownian motion to demonstrate that the extremal process generated by the individuals born of type $2$ before time $R$ converges as $t \to \infty$.
  \item We prove that letting $R \to \infty$, the above extremal process converges, and the limiting point measure is the point measure of the full two-type branching Brownian motion.
\end{enumerate}

In this section, we write $v = \sqrt{2\beta \sigma^2}$ and $\theta = \sqrt{2\beta/\sigma^2}$, which are respectively the speed and critical parameter of the branching Brownian motion of particles of type $1$. Recall that $m_t^{(II)} = \sqrt{2} t - \frac{3}{2\sqrt{2}} \log t$, we write
\[
  \hat{\mathcal{E}}_t = \sum_{u \in \mathcal{N}^2_t} \delta_{X_u(t) - m^{(II)}_t},
\]
the extremal process of particles of type $2$ in the branching Brownian motion, centred around $m^{(II)}_t$. We begin by proving that with high probability, no particle of type $2$ that was born from a particle of type $1$ after time $R$ has a descendant close to $m_t^{(II)}$.
\begin{lemma}
\label{lem:originII}
Assuming that $(\beta,\sigma^2) \in \mathcal{C}_{II}$, for all $A > 0$, we have
\[
  \lim_{R \to \infty} \limsup_{t \to \infty} \P(\exists u \in \mathcal{N}^2_t : T(u)\geq R, X_u(t)\geq m_t^{(II)}-A) = 0.
\]
\end{lemma}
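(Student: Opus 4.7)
The plan is to reduce to a first-moment computation by exploiting the branching structure. I would decompose each $u \in \mathcal{N}^2_t$ according to its first type-$2$ ancestor $u' \in \mathcal{B}$, of birth time $\tau_{u'} = T(u)$ and position $Y_{u'} = X_{u'}(\tau_{u'})$. Conditionally on $(\tau_{u'}, Y_{u'})$, the descendants of type~$2$ of $u'$ at time $t$ form a standard BBM of duration $t - \tau_{u'}$ translated by $Y_{u'}$. Writing $M^{(u')}$ for its running maximum, the event $\{\exists u \in \mathcal{N}^2_t:\, T(u) \geq R,\, X_u(t) \geq m_t^{(II)} - A\}$ is contained in $\{\exists u' \in \mathcal{B}:\, \tau_{u'} \in [R,t],\, Y_{u'} + M^{(u')}_{t-\tau_{u'}} \geq m_t^{(II)} - A\}$, so by Markov's inequality and Corollary~\ref{cor:poissonSum} it is bounded by
\[
  \alpha \int_R^t e^{\beta s}\, \E\!\left[\P\!\left(M_{t-s} \geq m_t^{(II)} - A - \sigma B_s \,\big|\, B_s\right)\right] \dd s,
\]
where $B$ is a standard Brownian motion and $M$ the maximum of an independent standard BBM.

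I would then apply the uniform tail estimate~\eqref{eqn:asymptoticsMaximaldisplacement} capped at $1$, splitting the inner probability into $\ind{\delta_s \leq 0} + C(1+\delta_s^+) e^{-\sqrt{2}\delta_s}\ind{\delta_s > 0}$ with $\delta_s := m_t^{(II)} - A - \sigma B_s - m_{t-s}$. The first indicator reduces to the Gaussian tail $\P(\sigma B_s \geq \sqrt{2}s + O(\log t))$. The second can be rewritten as $e^{-\sqrt{2} y_s} \E[e^{\sqrt{2}\sigma B_s}\ind{\sigma B_s < y_s}]$, with $y_s := m_t^{(II)} - A - m_{t-s}$, and is most conveniently evaluated via a Cameron--Martin shift $B_s \mapsto B_s + \sqrt{2}\sigma s$: the factor $e^{-\sqrt{2}(m_t^{(II)}-m_{t-s})} \approx e^{-2s}$ combines with the $e^{\sigma^2 s}$ coming from the shift, leaving a pre-factor $e^{(\sigma^2-2)s + O(\log t)}$ and the probability that a shifted Brownian motion stays below $\sqrt{2}s (1-\sigma^2)/\sigma + O(1)$.

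Verifying that the resulting exponential rate is strictly negative in $\mathcal{C}_{II}$ then splits into two sub-cases. For $\sigma^2 \leq 1$, the Cameron--Martin indicator is satisfied with probability $\Theta(1)$, and the rate is $\beta + \sigma^2 - 2$, which is negative because $(\beta,\sigma^2) \in \mathcal{C}_{II}$ with $\sigma^2 \leq 1$ forces $\beta + \sigma^2 < 2$. For $\sigma^2 > 1$, necessarily $\beta \leq 1$, and the shifted indicator contributes the extra Gaussian cost $e^{-(\sigma^2-1)^2 s/\sigma^2}$; combining with $e^{(\beta + \sigma^2-2)s}$ yields a total rate $\beta - 1/\sigma^2$, negative since $\beta \sigma^2 < 1$ in $\mathcal{C}_{II}$. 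The same computation applied to the $\ind{\delta_s \leq 0}$ term produces, via Mill's ratio, the rate $\beta - 1/\sigma^2$. Integration over $s \in [R,t]$ therefore yields an upper bound of the form $C e^{-cR}$ for some $c = c(\beta,\sigma^2) > 0$, which vanishes as $R \to \infty$.

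The main technical obstacle I foresee is controlling the logarithmic correction $\log(t/(t-s))$ hidden in $m_{t-s}$, which diverges as $s \to t$ and enters the Gaussian estimates through polynomial-in-$t$ prefactors. I would handle this by splitting the integral at $s = t/2$: on $[R, t/2]$ this correction is bounded by $\log 2$, while on $[t/2, t]$ the exponential factor $e^{-c t/2}$ from the analysis above dominates any polynomial-in-$t$ inflation and absorbs the contribution of particles $u'$ born too close to $t$.
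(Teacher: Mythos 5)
Your proposal is correct, and it reaches the same first-moment reduction as the paper (Markov inequality plus Corollary~\ref{cor:poissonSum} giving $\alpha\int_R^t e^{\beta s}\,\E[\P(M_{t-s}\geq m_t^{(II)}-A-\sigma B_s\mid B_s)]\,\dd s$), but it diverges from the paper in how it tames this integral in the delicate regime $\sigma^2>1$. The paper first conditions on the event that all type-$1$ particles stay below the line $s\mapsto \sqrt{2\beta\sigma^2}\,s+K$, invoking the ``triangular shape'' bound \eqref{eqn:triangleShape} at the cost of an extra $e^{-\theta K}$ error and a second limit $K\to\infty$; the indicator $\ind{\sigma B_s\leq vs+K}$ is then what makes the Girsanov-transformed first moment decay, at rate $-2(1-\sqrt{\beta\sigma^2})$. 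You instead cap the tail estimate \eqref{eqn:asymptoticsMaximaldisplacement} at $1$, which implicitly truncates at the higher level $\sqrt{2}s+O(\log t)$ rather than at $vs+K$ (note $v<\sqrt{2}$ here since $\beta\sigma^2<1$); your resulting rate $\beta-1/\sigma^2=(\beta\sigma^2-1)/\sigma^2$ is less negative than the paper's $2(\sqrt{\beta\sigma^2}-1)$, but still strictly negative throughout $\mathcal{C}_{II}$, and the $\ind{\delta_s\leq 0}$ term contributes the same rate $\beta-1/\sigma^2$ via the Gaussian tail, so the conclusion holds. What your route buys is a purely analytic first-moment argument: no auxiliary high-probability event, no parameter $K$, and no double limit. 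What it loses is only the sharper exponent. Your handling of the logarithmic correction by splitting the integral at $s=t/2$ is sound and, if anything, more explicit than the paper's appeal to dominated convergence. Two small points to tidy up in a written version: verify the rate $\beta-1/\sigma^2<0$ for the $\ind{\delta_s\leq 0}$ term also when $\sigma^2\leq 1$ (it holds, since $\beta+\sigma^2-2<0$ together with $\sigma^2+1/\sigma^2\geq 2$ gives $\beta<1/\sigma^2$), and keep the polynomial factor $(1+\delta_s^+)$ through the Cameron--Martin shift, where it contributes only an $O(s)$ prefactor that the negative exponential rate absorbs.
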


\begin{proof}
Let $K > 0$, we first recall that by \eqref{eqn:scaling} and \eqref{eqn:triangleShape}, we have
\[
  \P\left( \exists t \geq 0, u \in \mathcal{N}^1_t : X_u(t) \geq v t + K \right) \leq e^{-\theta K},
\]
i.e. that with high probability, all particles of type $1$ stay below the curve $s \mapsto v s + K$.

We now set, for $R,A,K \geq 0$ and $t \geq 0$:
\[
  Y_t(A,R, K) = \sum_{u \in \mathcal{B}} \ind{T(u) > R, X_u(T(u)) \leq v T(u) + K} \ind{M^{u}_t\geq m_t^{(II)}-A},
\]
where $M^u_t$ is the position of the rightmost descendant at time $t$ of the individual $u$. In other words, $Y_t(A,R,K)$ is the number of particles of type $2$ born from a particle of type $1$ after time $R$, that were born below the curve $s \mapsto v s + K$ and have a member of their family to the right of $m_t^{(II)}-A$. Observe that by Markov inequality, we have
\begin{align*}
  \P(\exists u \in \mathcal{N}^2_t : T(u)\geq R, X_u(t)\geq m_t^{(II)}-A) & \leq \P\left( \exists t \geq 0, u \in \mathcal{N}^1_t : X_u(t) \geq v s + K \right) + \P(Y_t(A,R,K) \geq 1) \\
  &\leq e^{-\theta K} + \E(Y_t(A,R,K)). 
\end{align*}
To complete the proof, it is therefore enough to bound $\limsup_{t \to \infty} \E(Y_t(A,R,K))$. Using the branching property and Corollary \ref{cor:poissonSum}, we have
\begin{align}
  \E(Y_t(A,R,K))
  &= \E\left( \sum_{u \in \mathcal{B}} \ind{T(u) \in [R,t]} \ind{X_u(T(u)) \leq v T(u) + K} F\left(t-T(u),X_u(T(u))\right)  \right)\nonumber\\
  &= \alpha \int_R^t e^{\beta s} \E\left( F\left(t-s,\sigma B_s\right) \ind{\sigma B_s \leq v s + K} \right)\dd s, \label{eqn:momentII}
\end{align}
where we have set $F(r,x) = \P^{(2)}\left( x + M_r \geq m_t^{(II)}-A\right)$.

By \eqref{eqn:asymptoticsMaximaldisplacement}, there exists $C > 0$ such that for all $x \in \R$ and $t \geq 0$, we have
\[
  \P^{(2)}\left( M_t \geq m_t^{(II)} + x \right) \leq C (1 + x_+) e^{-\sqrt{2} x},
\]
so that for all $s \leq t$,
\begin{align}
  F(t-s,x) &= \P^{(2)}\left( M_{t-s} \geq m_{t-s}^{(II)} + \sqrt{2} s + \tfrac{3}{2\sqrt{2}} \log \tfrac{t-s+1}{t+1}- A - x \right) \nonumber\\
  &\leq C \left(\frac{t+1}{t-s+1}\right)^\frac{3}{2}\left(1 + \sqrt{2} s + (-x)_+ \right) e^{-\sqrt{2} (\sqrt{2} s - x - A)} \label{eqn:boundPhiII}.
\end{align}
We bound $\E(Y_t(A,R,K))$ in two different ways, depending on the sign of $\sigma^2 - 1$.

First, if $\sigma^2 \leq 1$, we observe that the condition $X_u(s) \leq v s + K$ does not play a major role in the asymptotic behaviour of $\E(Y_t(A,R,K))$. As a result, \eqref{eqn:momentII} and \eqref{eqn:boundPhiII} yield
\[
  \E(Y_t(A,R,K)) \leq C e^A \int_R^t \left(\frac{t+1}{t-s+1}\right)^{3/2} e^{s (\beta - 2)} \E\left( \left( 1 +\sqrt{2}s + \sigma (-B_s)_+ \right) e^{\sqrt{2} \sigma B_s} \right) \dd s,
\]
and as $\E\left(\left( 1 + \sqrt{2 } s + \sigma  (-B_s)_+ \right) e^{\sqrt{2} \sigma B_s} \right) \leq C (1 + s) e^{\sigma^2 s}$, we have
\[  \E( Y_t(A,R,K)) \leq C e^A \int_R^t  \left(\frac{t+1}{t-s+1}\right)^\frac{3}{2}(s +1) \exp\left( s \left( \beta + \sigma^2 - 2 \right) \right) \dd s.
\]
Hence, as $(\beta,\sigma^2) \in \mathcal{C}_{II} $ and $\sigma^2 \leq 1$, we have $\beta + \sigma^2 - 2 < 0$. Therefore, by dominated convergence theorem,
\[
  \limsup_{t \to \infty} \E(Y_t(A,R,K)) \leq C e^A \int_R^\infty (s +1) \exp\left( s \left(\beta + \sigma^2 - 2\right) \right) \dd s,
\]
which goes to $0$ as $R\to \infty$, completing the proof in that case.

We now assume that $\sigma^2 > 1$. In that case, the condition $X_u(s) \leq \sqrt{2 \beta \sigma^2} s + K$ is needed to keep our upper bound small enough, as events of the form $\{X_u(s) \geq v s\}$ have small probability but $Y_t(A,R,K)$ is large on that event. Using the Girsanov transform, \eqref{eqn:momentII} yields
\begin{align*}
  &\E(Y_t(A,R,K))\\
  \leq &\alpha \int_R^t \E\left( e^{-\theta \sigma B_s}F(t-s,\sigma B_s+v s) \ind{\sigma B_s \leq K} \right) \dd s \\
  \leq &C\alpha e^{\sqrt{2} A} \int_R^t e^{-\sqrt{2} (\sqrt{2} - v) s} \left( \frac{t+1}{t-s+1} \right)^{\frac{3}{2}} \E\left( e^{(\sqrt{2} -\theta)\sigma B_s} \left( 1 + (v + \sqrt{2})s + (-B_s)_+\right) \ind{B_s \leq K} \right)\dd s,
\end{align*}
using \eqref{eqn:boundPhiII}. As $(\beta,\sigma^2) \in \mathcal{C}_{II}$ and $\sigma^2 > 1$, we have $\beta \sigma^2 < 1$. This yields in particular $\beta < \sigma^2$ hence $\sqrt{2}-\theta> 0$. Integrating with respect to the Brownian density, there exists $C>0$ such that
\[
  \E\left( e^{(\sqrt{2} -\theta)\sigma B_s} \left( 1 + \sqrt{2} (\sqrt{\beta \sigma^2} +1)s + (-B_s)_+\right) \ind{B_s \leq K} \right) \leq C (1 + s)^{\frac{1}{2}} e^{(\sqrt{2} -\theta)\sigma K},
\]
yielding
\[
  \E(Y_t(A,R,K)) \leq C \alpha e^{\sqrt{2} A + (\sqrt{2} -\theta)\sigma K} \int_R^t e^{-2(1 - \sqrt{\beta \sigma^2}) s} \left( \frac{t+1}{t-s+1} \right)^{\frac{3}{2}}(1 + s)^{\frac{1}{2}} \dd s.
\]
Then by dominated convergence, as $1 - \sqrt{\beta \sigma^2}>0$, we deduce that
\[
  \limsup_{t \to \infty} \E(Y_t(A,R,K)) \leq C \alpha e^{\sqrt{2} A + (\sqrt{2} -\theta)\sigma K} \int_R^\infty e^{-2(1 - \sqrt{\beta \sigma^2}) s} (1 + s)^{\frac{1}{2}} \dd s,
\]
which decreases to $0$ as $R \to \infty$, completing the proof.
\end{proof}

We now use the known asymptotic behaviour of the extremal process of the branching Brownian motion, recalled in Section \ref{sec:facts}, to compute the asymptotic behaviour of the extremal process of particles satisfying $T(u) \leq R$, defined as
\[
  \hat{\mathcal{E}}^R_t := \sum_{u \in \mathcal{N}^2_t} \ind{T(u) \leq R} \delta_{X_u(t) - m^{(II)}_t}.
\] 

For any $u \in \mathcal{B}$, and $t \geq 0$, we set
\[
  Z^{(u)}_t := \sum_{\substack{u' \in \mathcal{N}^2_t\\u' \succcurlyeq u}} (\sqrt{2} t - X_{u'}(t)) e^{\sqrt{2} (X_{u'}(t) - \sqrt{2} t)},
\]
where we recall that $u' \succcurlyeq u$ denotes that $u'$ is a descendant of $u$. Note that by \eqref{eqn:cvDerivative} and the branching property, $Z^{(u)}_t$ converges a.s. to the variable
$\displaystyle
  Z^{(u)}_\infty := \liminf_{t \to \infty} Z^{(u)}_t.
$
Moreover, $e^{-\sqrt{2}(X_u(T(u)) - \sqrt{2} T(u))} Z^{(u)}_\infty \egaldistr Z_\infty$, where $Z_\infty$ is the limit of the derivative martingale of a standard branching Brownian motion.

\begin{lemma}
\label{lem:extremalII}
For all $\phi \in \mathcal{T}$, we have $\displaystyle\lim_{t \to \infty} \crochet{\hat{\mathcal{E}}^R_t,\phi} = \crochet{\hat{\mathcal{E}}^R_\infty,\phi}$ in law,
where $\hat{\mathcal{E}}^R_\infty$ is a decorated Poisson point process with intensity $c_\star \bar{Z}_R \sqrt{2}e^{-\sqrt{2}x} \dd x$ and decoration law $\mathfrak{D}$, with $\displaystyle \bar{Z}_R := \sum_{u \in \mathcal{B}} \ind{T(u) \leq R} Z^{(u)}_\infty$.
\end{lemma}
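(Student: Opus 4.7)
The plan is to exploit the branching property to write $\hat{\mathcal{E}}^R_t$ as a superposition of conditionally independent extremal processes of standard BBMs, and then apply Lemma~\ref{lem:abbs} to each one. Set $J_R := \{u \in \mathcal{B} : T(u) \leq R\}$; by Lemma~\ref{lem:poissonProc} this random set is conditionally Poisson with total intensity $\alpha \int_0^R |\mathcal{N}^1_s|\dd s$, and in particular almost surely finite. For each $u_j \in J_R$ write $\tau_j := T(u_j)$, $X_j := X_{u_j}(\tau_j)$, and let $\mathcal{F}^R$ denote the $\sigma$-field generated by all type-$1$ trajectories together with the birth data $(\tau_j, X_j)_{j \in J_R}$. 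The branching property ensures that, conditionally on $\mathcal{F}^R$, the translated subtrees $(X_{u'}(\tau_j + s) - X_j : u' \succcurlyeq u_j)_{s \geq 0}$ are independent standard BBMs, and therefore
\[
\hat{\mathcal{E}}^R_t = \sum_{j \in J_R} \mathcal{E}^{(u_j)}_t, \qquad \mathcal{E}^{(u_j)}_t := \sum_{u' \succcurlyeq u_j} \delta_{X_{u'}(t) - m^{(II)}_t},
\]
with the summands conditionally independent given $\mathcal{F}^R$.

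Fix $\phi \in \mathcal{T}$ and set $a_j := X_j - \sqrt{2}\tau_j$. A direct computation gives $m^{(II)}_t - X_j - m_{t-\tau_j} = -a_j + \tfrac{3}{2\sqrt{2}}\log\tfrac{t-\tau_j}{t}$, which tends to $-a_j$ as $t \to \infty$. Since $\phi$ has support bounded on the left, its translate by any $y \in \R$ remains in $\mathcal{T}$ and depends continuously on $y$, so Lemma~\ref{lem:abbs} applied to the $j$-th translated subtree (together with a continuity argument to absorb the vanishing logarithmic correction) yields
\[
\E\bigl(e^{-\crochet{\mathcal{E}^{(u_j)}_t, \phi}} \bigm| \mathcal{F}^R\bigr) \xrightarrow[t\to\infty]{} \E\Bigl(\exp\bigl(-c_\star \widetilde{Z}^{(j)}_\infty I[\phi(\cdot+a_j)]\bigr) \Bigm| \mathcal{F}^R\Bigr),
\]
where $I[\psi] := \int (1-e^{-\Psi[\psi](z)}) \sqrt{2} e^{-\sqrt{2}z}\dd z$ and $\widetilde{Z}^{(j)}_\infty$ denotes the derivative martingale limit of the $j$-th subtree in its own coordinates. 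Two elementary identities then reduce this to $\E(\exp(-c_\star Z^{(u_j)}_\infty I[\phi]) \mid \mathcal{F}^R)$: a change of variable in the integral gives $I[\phi(\cdot+a_j)] = e^{\sqrt{2}a_j} I[\phi]$, while expanding the definition of the derivative martingale and using the almost sure vanishing of the critical additive martingale $W^{(j)}_t(\sqrt{2})$ yields $\widetilde{Z}^{(j)}_\infty = e^{-\sqrt{2}a_j} Z^{(u_j)}_\infty$.

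Combining these two displays, the conditional Laplace functional factorises over the finite set $J_R$:
\[
\E\bigl(e^{-\crochet{\hat{\mathcal{E}}^R_t, \phi}} \bigm| \mathcal{F}^R\bigr) = \prod_{j \in J_R} \E\bigl(e^{-\crochet{\mathcal{E}^{(u_j)}_t, \phi}} \bigm| \mathcal{F}^R\bigr) \xrightarrow[t\to\infty]{} \prod_{j \in J_R} \E\bigl(e^{-c_\star Z^{(u_j)}_\infty I[\phi]} \bigm| \mathcal{F}^R\bigr) = \E\bigl(e^{-c_\star \bar{Z}_R I[\phi]} \bigm| \mathcal{F}^R\bigr),
\]
the limit being justified by bounded convergence (since $|J_R|$ is a.s. finite and each factor is bounded by $1$) and the last equality by the conditional independence of the $Z^{(u_j)}_\infty$ given $\mathcal{F}^R$. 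Taking expectations and comparing with Lemma~\ref{lem:abbs} identifies the limit as the Laplace functional of a DPPP$(c_\star \bar{Z}_R \sqrt{2} e^{-\sqrt{2}x}\dd x, \mathfrak{D})$, which completes the argument. The main delicate point is the bookkeeping of the two distinct centrings—the one by which $\hat{\mathcal{E}}^R_t$ is defined versus the natural centring $X_j + m_{t-\tau_j}$ of each subtree—and the verification of the identity $\widetilde{Z}^{(j)}_\infty = e^{-\sqrt{2}a_j} Z^{(u_j)}_\infty$; once this is in place, the reduction to Lemma~\ref{lem:abbs} is routine.
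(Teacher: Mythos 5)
Your proposal is correct and follows essentially the same route as the paper: factorise the Laplace functional over the a.s.\ finite set $\{u \in \mathcal{B} : T(u) \leq R\}$ via the branching property, apply Lemma~\ref{lem:abbs} to each subtree after absorbing the shift $a_j = X_j - \sqrt{2}\tau_j$ and the vanishing logarithmic correction, and conclude by dominated convergence. Your explicit verification that $e^{\sqrt{2}a_j}\widetilde{Z}^{(j)}_\infty = Z^{(u_j)}_\infty$ (using the a.s.\ vanishing of the critical additive martingale) is the same cancellation the paper encodes in the factor $e^{\sqrt{2}x - 2s}$ inside $F_t(s,x)$ and in the definition of $Z^{(u)}_\infty$.
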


\begin{proof}
Let $\phi \in \mathcal{T}$ be a test function. Observe that using the branching property of the branching Brownian motion, we have
\[
  \E\left( \exp\left( -\crochet{\hat{\mathcal{E}}^R_t, \phi} \right) \right) = \E\left( \prod_{u \in \mathcal{B} : T(u) \leq R} F_t(T(u),X_u(T(u))) \right),
\]
where $F_t(s,x) = \E^{(2)}\left( \exp\left( -\sum_{u \in \mathcal{N}_t} \phi\left(x+X_u(t-s)-m_t^{(II)}\right) \right) \right)$ for $0 \leq s \leq t$ and $x \in \R$. Using again that $m_t^{(II)} = m_{t-s}^{(II)} + \sqrt{2 } s + o(1)$ as $t \to \infty$ and applying Lemma \ref{lem:abbs}, we have for all $s \geq 0$
\[
  \lim_{t \to \infty}  F_t(s,x) = \E^{(2)} \left( \exp\left(- c_\star Z_\infty e^{\sqrt{2} x - 2 s} \int (1 - e^{-\Psi[\phi] (z)})\sqrt{2}e^{- \sqrt{2 } z} \dd z  \right)\right),
\]
where $Z_\infty$ is the limit of the derivative martingale in a standard branching Brownian motion. Therefore, by dominated convergence theorem,
\[
  \lim_{t \to \infty}  \E\left( \exp\left( -\crochet{\hat{\mathcal{E}}^R_t, \phi} \right) \right)
  = \E\left( \exp\left( -c_\star \bar{Z}_R \int (1 - e^{-\Psi[\phi](z)})\sqrt{2 }e^{- \sqrt{2} z} \dd z \right) \right),
\]
with $\bar{Z}_R = \sum_{u \in \mathcal{B}} \ind{T(u) \leq R} Z_\infty^{(u)}$, completing the proof.
\end{proof}

We then observe that $\hat{\mathcal{E}}^R_\infty$ converges in law as $R \to \infty$ to $\mathcal{E}^{(II)}_\infty$ the point measure defined in Theorem~\ref{thm:mainII}.
\begin{lemma}
\label{lem:limitII}
For all $\phi \in \mathcal{T}$, we have $\lim_{R \to \infty} \crochet{\hat{\mathcal{E}}^R_\infty,\phi} = \crochet{\mathcal{E}^{(II)}_\infty, \phi}$ in law, where $\bar{Z}_\infty := \sum_{u \in \mathcal{B}} Z^{(u)}_\infty$.
\end{lemma}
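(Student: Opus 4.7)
The plan is to combine monotone convergence of $\bar{Z}_R$ with an identification through Laplace transforms; the only delicate point is the almost sure finiteness of $\bar{Z}_\infty$.

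Since each $Z^{(u)}_\infty\geq 0$ is the a.s.\ limit of a non-negative derivative martingale, the sequence $R\mapsto \bar{Z}_R$ is a.s.\ non-decreasing and converges by monotone convergence to $\bar{Z}_\infty\in[0,\infty]$. Setting $K(\phi):=\int (1-e^{-\Psi[\phi](z)})\sqrt{2}\,e^{-\sqrt{2}z}\,\dd z$ for $\phi\in\mathcal{T}$, Lemma~\ref{lem:extremalII} gives $\E(e^{-\crochet{\hat{\mathcal{E}}^R_\infty,\phi}})=\E(\exp(-c_\star \bar{Z}_R K(\phi)))$, and dominated convergence (the integrand lies in $[0,1]$) yields
\[
\lim_{R\to\infty}\E\bigl(e^{-\crochet{\hat{\mathcal{E}}^R_\infty,\phi}}\bigr)=\E\bigl(\exp(-c_\star \bar{Z}_\infty K(\phi))\bigr).
\]
Provided $\bar{Z}_\infty<\infty$ a.s., the right-hand side is exactly the Laplace transform at $\phi$ of $\mathcal{E}^{(II)}_\infty$, the DPPP with random intensity $c_\star \bar{Z}_\infty\sqrt{2}\,e^{-\sqrt{2}x}\,\dd x$ and decoration law $\mathfrak{D}$, so by uniqueness of Laplace transforms this identifies the limit in law.

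It therefore remains to show $\bar{Z}_\infty<\infty$ almost surely. Applying the branching property to each BBM of descendants of $u\in\mathcal{B}$ and using that $\sqrt{2}$ is the critical parameter for the additive martingale---so that $W_s(\sqrt{2})\to 0$ a.s.\ and the corresponding contribution vanishes---one obtains the factorisation
\[
Z^{(u)}_\infty=c_u \tilde{Z}^{(u)}_\infty,\qquad c_u:=e^{\sqrt{2}(X_u(T(u))-\sqrt{2}T(u))},
\]
where the variables $(\tilde{Z}^{(u)}_\infty)_{u\in\mathcal{B}}$ are, conditionally on $\mathcal{F}^{\mathcal{B}}:=\sigma(\mathcal{F}^1,(T(u),X_u(T(u)))_{u\in\mathcal{B}})$, i.i.d.\ copies of $Z_\infty$, the derivative martingale limit of a standard BBM. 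The tail estimate \eqref{eqn:tailDerivativeMartingale} yields $\E(c_u Z_\infty\wedge 1\mid c_u)\lesssim c_u(1+\log^+(1/c_u))$ and $\P(c_u Z_\infty>1\mid c_u)\lesssim c_u$ for small $c_u$, so Kolmogorov's three-series theorem applied conditionally on $\mathcal{F}^{\mathcal{B}}$ reduces the a.s.\ finiteness of $\bar{Z}_\infty=\sum_u c_u \tilde{Z}^{(u)}_\infty$ to that of $\sum_{u\in\mathcal{B}}c_u\log^+(1/c_u)$ and of $\#\{u\in\mathcal{B}:c_u\geq 1\}$.

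Both quantities are bounded in expectation by Corollary~\ref{cor:poissonSum}, which reduces them to integrals of the form $\alpha\int_0^\infty e^{\beta t}\,\E[g(\sigma B_t)\,e^{\sqrt{2}\sigma B_t-2t}]\,\dd t$. A Girsanov shift $B_t\mapsto B_t+\sqrt{2}\sigma t$ turns these into Gaussian expectations with drift $\sqrt{2}(1-\sigma^2)t$. The key difficulty lies in the sub-case $\sigma^2>1$, where $\E(\sum c_u)$ itself diverges: there the now negative drift produces an extra Gaussian tail $\exp(-(\sigma^2-1)^2 t/\sigma^2)$ which, combined with the exponential prefactor, yields the effective rate $\beta-1/\sigma^2<0$ (guaranteed by the defining inequality $\sigma^2<1/\beta$ of $\mathcal{C}_{II}$). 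In the sub-case $\sigma^2\leq 1$ the relevant rate is directly $\beta+\sigma^2-2<0$, equally guaranteed by the definition of $\mathcal{C}_{II}$. Markov's inequality then gives the required a.s.\ finiteness and hence $\bar{Z}_\infty<\infty$ a.s.
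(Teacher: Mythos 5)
Your proof is correct and follows essentially the same route as the paper's: reduce the lemma to the a.s.\ finiteness of $\bar{Z}_\infty$ via the Laplace functional of Lemma~\ref{lem:extremalII} and monotone convergence, then prove finiteness through a conditional Kolmogorov-type two-series criterion combined with first-moment bounds from Corollary~\ref{cor:poissonSum}, split according to the sign of $\sigma^2-1$. The only (harmless) divergence is the treatment of the exceptional particles: the paper discards those with $X_u(T(u))\geq \sqrt{2\beta\sigma^2}\,T(u)$ using the a.s.\ fact that the type-$1$ maximum drifts to $-\infty$ below $\sqrt{2\beta\sigma^2}\,t$, whereas you discard those with $c_u\geq 1$ by a first-moment bound at rate $\beta-1/\sigma^2<0$, which is indeed valid since $\beta\sigma^2<1$ throughout $\mathcal{C}_{II}$ (by AM--GM when $\sigma^2\leq 1$ and $\beta>1$).
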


\begin{proof}
Recall that $Z_\infty \geq 0$ a.s. therefore $(\bar{Z}_R, R \geq 0)$ is increasing and $\bar{Z}_\infty = \lim_{R \to \infty} \bar{Z}_R$ exists a.s. Given that for all function $\phi \in \mathcal{T}$,
\[
  \E\left( e^{-\crochet{\hat{\mathcal{E}}^R_\infty,\phi}} \right) =\E\left( \exp\left( -c_\star \bar{Z}_R \int (1 - e^{-\Psi[\phi](z)})\sqrt{2}e^{- \sqrt{2} z} \dd z \right) \right),
\]
to prove that $\mathcal{E}^R_\infty$ converges in law, it is enough to show that $\bar{Z}_\infty < \infty$ a.s.

We recall that $v = \sqrt{2\beta \sigma^2}$ the speed of the branching Brownian motion of particles of type $1$. As
\[ \lim_{t \to \infty} \max_{u \in \mathcal{N}^1_t} X_u(t) - v t = -\infty \quad \text{a.s.}\]
(which is a consequence of the fact that the additive martingale at the critical parameter converges to 0 a.s.), there is almost surely finitely many $u \in \mathcal{B}$ with $X_u(T(u)) \geq v T(u)$. To prove the finiteness of $\bar{Z}_\infty$, we then use the following variation on Kolmogorov's three series theorem: If we have
\begin{equation}
  \sum_{u \in \mathcal{B}} \ind{X_u(T(u)) \leq v T(u)} \E\left( Z_\infty^{(u)} \wedge 1\middle|\mathcal{F}^1 \vee \sigma(\mathcal{B}) \right) < \infty \quad \text{a.s.}, \label{eqn:aimLimitMean}
\end{equation}
then $\tilde{Z}_\infty := \sum_{u \in \mathcal{B}} \ind{X_u(T(u)) \leq v T(u)}  Z_\infty^{(u)}  < \infty$ a.s, where we recall that $\mathcal{F}^1 = \sigma(X_u(t), u \in \mathcal{N}_t, t \geq 0)$. Using that $\bar{Z}_\infty$ is obtained by adding a finite number of finite random variables to $\tilde{Z}_\infty$, it implies that $\bar{Z}_\infty < \infty$ a.s

Indeed, if we assume \eqref{eqn:aimLimitMean}, using the Markov inequality and the Borel-Catelli lemma, we deduce that almost surely there are finitely many $u \in \mathcal{B}$ whose contribution to $\tilde{Z}_\infty$ is larger than $1$. Additionally, \eqref{eqn:aimLimitMean} also implies that the sum of all the other contributions to $\tilde{Z}_\infty$ has finite mean. Hence, we have $\tilde{Z}_\infty < \infty$ a.s.

We now prove \eqref{eqn:aimLimitMean}, using that by \eqref{eqn:tailDerivativeMartingale} and \eqref{eqn:asymptoticsDerivativeMartingale} for all $x \in \R$, we have $\E((Z_\infty e^x) \wedge 1) \leq C (1 + (-x)_+) e^x$. Hence, using that $Z^{(u)}_\infty\egaldistr e^{\sqrt{2}(X_u(T(u)) - \sqrt{2} T(u))} Z_\infty$  it is enough to show that
\begin{equation}
  \sum_{u \in \mathcal{B}} \ind{X_u(T(u)) \leq v T(u)}\left( 1 + \left(2  T(u) - \sqrt{2} X_u(T(u))\right)_+\right)e^{\sqrt{2} X_u(T(u)) - 2 {T(u)}}< \infty \quad \text{a.s.}
  \label{eqn:simplifyAim}
\end{equation}
This quantity being a series of positive random variables, we prove that this series has finite mean to conclude.
By Corollary \ref{cor:poissonSum}, we have
\begin{multline*}
  \E\left(   \sum_{u \in \mathcal{B}} \ind{X_u(T(u)) \leq v {T(u)}}\left( 1 + \left(2  {T(u)} - \sqrt{2 } X_u(T(u))\right)_+\right)e^{\sqrt{2} X_u(T(u)) - 2 {T(u)}} \right)\\
  = \alpha \int_0^\infty e^{\beta s}\E\left( \ind{\sigma B_s \leq v s} \left(1 + \left( 2 s - \sqrt{2} \sigma B_s \right)_+  \right) e^{\sqrt{2} \sigma B_s - 2 s} \right) \dd s.
\end{multline*}
Similarly to the proof of Lemma \ref{lem:originII}, we bound the above quantity in two different ways depending on whether $\sigma^2 > 1$ or $\sigma^2 \leq 1$.

If $\sigma^2 \leq 1$, we have
\begin{equation*}
   e^{\beta s}\E\left( \left(1 + \left( \sqrt{2 } \sigma B_s  - 2 s\right)_+  \right) e^{\sqrt{2} \sigma B_s - 2 s} \right)
  \leq  C (1 + s) \exp\left( s \left(\sigma^2 + \beta - 2 \right) \right),
\end{equation*}
which decays exponentially fast as $(\beta,\sigma^2) \in \mathcal{C}_{II}$ and $\sigma^2 \leq 1$. Therefore, we have
\[
  \E\left(   \sum_{u \in \mathcal{B}} \ind{X_u(T(u)) \leq v {T(u)}}\left( 1 + \left(2 T(u) - \sqrt{2} X_u(T(u))\right)_+\right)e^{\sqrt{2 } X_u(T(u)) - 2  {T(u)}} \right) < \infty,
\]
proving \eqref{eqn:simplifyAim}, hence \eqref{eqn:aimLimitMean}, therefore that $\bar{Z}_\infty < \infty$ a.s. in that case.

If $\sigma^2 > 1$, we have
\begin{multline*}
   e^{\beta s}\E\left( \ind{\sigma B_s \leq v s}\left(1 + \left( 2s - \sqrt{2} \sigma B_s \right)_+  \right) e^{\sqrt{2} \sigma B_s - 2 s} \right)\\
  = \E\left( \ind{B_s \leq 0} \left(1 + \left( 2(1 - \sqrt{\sigma^2\beta}) s - \sqrt{2} \sigma B_s \right)_+  \right) e^{\sqrt{2} \sigma B_s} \right) e^{2 \left(\sqrt{\sigma^2 \beta} - 1 \right)s}\\
  \leq  C (1 + s) \exp\left( s \left( \sqrt{\sigma^2 \beta } -1 \right) \right).
\end{multline*}
As $(\beta,\sigma^2) \in \mathcal{C}_{II}$ and $\sigma^2 < 1$ we have once again
\[
  \E\left(   \sum_{u \in \mathcal{B}} \ind{X_u(T(u)) \leq v {T(u)}}\left( 1 + \left( 2  {T(u)} - \sqrt{2} X_u(T(u))\right)_+ \right)e^{\sqrt{2} X_u(T(u)) - 2   {T(u)}} \right) < \infty,
\]
which proves that $\bar{Z}_\infty < \infty$ a.s. in that case as well.
\end{proof}

Using the above results, we finally obtain the asymptotic behaviour of the extremal process in case $\mathcal{C}_{II}$. 
\begin{proof}[Proof of Theorem \ref{thm:mainII}]
Recall that $\hat{\mathcal{E}}_t = \sum_{u \in \mathcal{N}^2_t} \delta_{X_u(t) - m_t}$. Using Proposition \ref{prop:convergencePointProcess}, we only need to prove that for all $\phi \in \mathcal{T}$, we have
\[
  \lim_{t \to \infty} \E\left( e^{-\crochet{\hat{\mathcal{E}}_t,\phi}} \right) = \E\left( e^{-\crochet{\mathcal{E}^{(II)}_\infty,\phi}} \right).
\]
Let $\phi \in \mathcal{T}$, and set $A \in \R$ such that $\phi(z) = 0$ for all $z \leq A$. By Lemma \ref{lem:originII}, for all $\epsilon>0$, there exists $R \geq 0$ such that
$
  \P\left( \hat{\mathcal{E}}^R_t(\phi) \neq \hat{\mathcal{E}}_t(\phi) \right) \leq \epsilon.
$
Then, using that $\phi$ is non-negative, so that $\crochet{\hat{\mathcal{E}}^R_t,\phi} \leq \crochet{\hat{\mathcal{E}}_t,\phi}$ and $e^{-\crochet{ \hat{\mathcal{E}}_t,\phi}}$ is bounded by $1$, we have $\displaystyle \E\left( e^{-\crochet{\hat{\mathcal{E}}^R_t,\phi}} \right) \leq \E\left( e^{-\crochet{\hat{\mathcal{E}}_t,\phi}} \right) \leq \E\left( e^{-\crochet{\hat{\mathcal{E}}^R_t,\phi}} \right) + \epsilon$. Applying Lemma~\ref{lem:extremalII} and Lemma \ref{lem:limitII} to let $t$, then $R$, grow to $\infty$, we obtain
\[
  \E\left( e^{-\crochet{\mathcal{E}^{(II)}_\infty,\phi}} \right) \leq \liminf_{t \to \infty}\E \left(e^{-\crochet{\hat{\mathcal{E}}_t,\phi}} \right) \leq \limsup_{t \to \infty} \E \left(e^{-\crochet{\hat{\mathcal{E}}_t,\phi}} \right) \leq \E\left( e^{-\crochet{\mathcal{E}^{(II)}_\infty,\phi}} \right) + \epsilon.
\]
Letting $\epsilon \to 0$ we obtain that $\lim_{t \to \infty}\E \left(e^{-\crochet{\hat{\mathcal{E}}_t,\phi}} \right) = \E\left( e^{-\crochet{\mathcal{E}^{(II)}_\infty,\phi}} \right)$ for all $\phi \in \mathcal{T}$, which completes the proof of Theorem \ref{thm:mainII} by Remark \ref{rem:15}.
\end{proof}

We end this section by conjecturing a possible direct formula for the computation of $\bar{Z}_\infty$ as the limit of a sub-martingale of the multitype BBM.
\begin{conjecture}
We have $\lim_{t \to \infty} \sum_{u \in \mathcal{N}_t^2} (\sqrt{2} t - X_u(t)) e^{\sqrt{2} X_u(t) - 2t} = \bar{Z}_\infty$ a.s.
\end{conjecture}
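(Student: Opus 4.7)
The plan is to reuse the decomposition employed throughout this section: setting $S_t := \sum_{u \in \mathcal{N}_t^2} (\sqrt{2}t - X_u(t)) e^{\sqrt{2} X_u(t) - 2t}$, each type-$2$ particle has a unique ancestor in $\mathcal{B}$, so the branching property yields $S_t = \sum_{v \in \mathcal{B},\, T(v)\le t} Z^{(v)}_t$. I would show that one may pass to the limit term by term to get $S_t \to \bar{Z}_\infty$ almost surely.

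First, for each fixed $v \in \mathcal{B}$, conditionally on $\mathcal{F}^1 \vee \sigma(\mathcal{B})$ the subtree rooted at $v$ is a standard BBM, and expanding the factor $\sqrt{2}t - X_{u'}(t)$ around $X_v(T(v))$ and $T(v)$ yields
\[
  Z^{(v)}_{T(v)+s} = e^{\sqrt{2} X_v(T(v)) - 2 T(v)}\bigl[\, \zeta^{(v)}_s + (\sqrt{2} T(v) - X_v(T(v)))\, \omega^{(v)}_s \,\bigr],
\]
where $\zeta^{(v)}$ and $\omega^{(v)}$ are, respectively, the standard derivative martingale and the standard critical additive martingale of the subtree started at $0$. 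Since $\zeta^{(v)}_s \to \zeta^{(v)}_\infty$ a.s. by \eqref{eqn:cvDerivative} and $\omega^{(v)}_s \to 0$ a.s. by the Neveu theorem, we obtain $Z^{(v)}_t \to Z^{(v)}_\infty$ a.s. In particular, for fixed $R > 0$ the finite partial sum $\sum_{v: T(v) \le R} Z^{(v)}_t$ converges a.s. to $\bar{Z}_R$ as $t \to \infty$, and $\bar{Z}_R \to \bar{Z}_\infty$ a.s. by the argument already carried out in the proof of Lemma~\ref{lem:limitII}.

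The main obstacle is then to control the tail $\mathcal{R}^R_t := \sum_{v: R < T(v) \le t} Z^{(v)}_t$ uniformly in $t \ge R$, as $R\to\infty$. I would split $\mathcal{R}^R_t$ according to the above decomposition into an additive-martingale piece (involving $\omega^{(v)}$) and a derivative-martingale piece (involving $\zeta^{(v)}$). For the first, one combines Corollary~\ref{cor:poissonSum} with the envelope bound \eqref{eqn:triangleShape} restricting $X_v(T(v)) \le \sqrt{2\beta\sigma^2}\, T(v) + K$; since $\E[\omega^{(v)}_s] = 1$, the resulting $L^1$ mass reduces to the very Poisson integral estimated at the end of the proof of Lemma~\ref{lem:limitII}, which is finite and tends to $0$ as $R\to\infty$, in both subcases $\sigma^2 \le 1$ and $\sigma^2 > 1$. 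The signed derivative piece is more delicate because $\zeta^{(v)}$ is not $L^1$-bounded; I would address this by replacing $\zeta^{(v)}_s$ with a non-negative truncated derivative martingale obtained by restricting to descendants that remain below the barrier $\sqrt{2} r + K$ for all $r \le s$, for which $L^1$-boundedness and uniform-in-$s$ convergence are classical. The truncation error is supported on the event that some descendant of $v$ in its subtree crosses that barrier, whose probability is $O(e^{-\sqrt{2}K})$ by \eqref{eqn:triangleShape} applied to the subtree, uniformly in $s$; a union bound over $v$ with $T(v) > R$, weighted by the same Poisson integral, lets us choose $K$ as a function of $R$ so as to make the error negligible uniformly in $t$.

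Together these estimates give $\mathcal{R}^R_t \to 0$ in probability, uniformly in $t \ge R$, and combining with the previous pointwise convergence yields $S_t \to \bar{Z}_\infty$ in probability. The upgrade to almost sure convergence would follow by extracting a subsequence $R_n \uparrow \infty$ along which the uniform tail estimates are summable, in the spirit of Borel--Cantelli. The hardest step will be the derivative-martingale truncation: handling signed, non-$L^1$-bounded contributions across infinitely many subtrees requires a careful coupling of the truncation level $K$ with the cutoff $R$ so that the error terms compose correctly with the Poisson bookkeeping from Corollary~\ref{cor:poissonSum}.
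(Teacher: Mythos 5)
First, be aware that the paper offers no proof of this statement: it is explicitly left as an open conjecture at the end of Section~\ref{sec:proofII}, so there is no argument of record to compare yours against. That said, the first half of your plan is sound: the identity $S_t=\sum_{v\in\mathcal{B},\,T(v)\le t}Z^{(v)}_t$, the decomposition $Z^{(v)}_{T(v)+s}=e^{\sqrt{2}X_v(T(v))-2T(v)}\bigl[\zeta^{(v)}_s+(\sqrt{2}T(v)-X_v(T(v)))\,\omega^{(v)}_s\bigr]$, the term-by-term convergence of the (a.s.\ finitely many) terms with $T(v)\le R$, and the convergence $\bar{Z}_R\uparrow\bar{Z}_\infty$ are all correct.

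The gap is in your uniform control of the tail $\mathcal{R}^R_t$, specifically in the treatment of the truncation error for the derivative-martingale piece. Two things go wrong as written. First, on the event that the subtree of $v$ crosses the barrier $\sqrt{2}r+K$, the discrepancy between $\zeta^{(v)}_s$ and its truncated version is \emph{not bounded}, so the probability $O(e^{-\sqrt{2}K})$ of that event does not by itself control the error's size. Second, the union bound diverges: conditionally on $\mathcal{F}^1\vee\sigma(\mathcal{B})$ there are of order $\alpha e^{\beta s}\,\dd s$ points $v\in\mathcal{B}$ with $T(v)\in[s,s+\dd s]$, so $\sum_{v:T(v)>R}\P(\text{crossing in the subtree of }v)=+\infty$ for \emph{every} fixed level $K=K(R)$; no choice of $K$ as a function of $R$ alone repairs this. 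What actually saves the argument is the prefactor $e^{\sqrt{2}X_v(T(v))-2T(v)}$, i.e.\ the three-series mechanism already used in Lemma~\ref{lem:limitII}. A workable repair is the following: combine Doob's maximal inequality for the (non-negative, mean-$K$) truncated derivative martingale and for $\omega^{(v)}$ with the barrier estimate \eqref{eqn:triangleShape}, then optimize over $K$ to obtain $\P(\sup_{s\ge0}|\zeta^{(v)}_s|>x)\le C(1+\log x)/x$; this yields $\E\bigl[(\lambda\sup_{s}|\zeta^{(v)}_s|)\wedge1\bigr]\le C\lambda(1+\log_+^2(1/\lambda))$, and rerunning the computation of \eqref{eqn:aimLimitMean}--\eqref{eqn:simplifyAim} with $\sup_{s\ge0}\bigl|Z^{(v)}_{T(v)+s}\bigr|$ in place of $Z^{(v)}_\infty$ (the extra logarithmic factors are harmless against the exponential decay in $s$) shows that $\sum_{v:T(v)>R}\sup_{s\ge0}|Z^{(v)}_{T(v)+s}|$ is a.s.\ finite and decreases to $0$ as $R\to\infty$. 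This gives $\limsup_{t\to\infty}|S_t-\bar{Z}_\infty|\le\sum_{v:T(v)>R}\sup_{s}|Z^{(v)}_{T(v)+s}|+(\bar{Z}_\infty-\bar{Z}_R)$ for every $R$, hence almost sure convergence directly; note that your proposed upgrade from ``convergence in probability uniformly in $t$'' to almost sure convergence via a Borel--Cantelli subsequence is not, as stated, a valid step for a continuous-time family.
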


\section{Proof of Theorem \ref{thm:mainI}}
\label{sec:proofI}

In this section, we assume that $(\beta,\sigma^2) \in \mathcal{C}_I$, that is either $\sigma^2 \leq 1$ and $\sigma^2 > \frac{1}{\beta}$, or $\sigma^2 > 1$ and $\sigma^2 >  \frac{\beta}{2\beta-1}$. In that situation, we show that the extremal process of particles of type $2$ is mainly driven by the asymptotic of particles of type $1$, and that any particle of type $2$ significantly contributing to the extremal process at time $t$ satisfies $t - T(u) = O(1)$, meaning that they have a close ancestor of type $1$.

For the rest of the section, we denote by $v = \sqrt{2\beta \sigma^2}$ and $\theta = \sqrt{2\beta/\sigma^2}$ the speed and critical parameter of the BBM of particles of type $1$. Recall that $m^{(I)}_t = v t - \frac{3}{2\theta} \log t$, and we set
\[
  \hat{\mathcal{E}}_t := \sum_{u \in \mathcal{N}^2_t} \delta_{X_u(t) - m^{(I)}_t},
\]
the extremal process of particles of type $2$, centred around $m^{(I)}_t$.

To prove Theorem~\ref{thm:mainI}, we first show that for all $\phi \in \mathcal{T}$, $\crochet{\hat{\mathcal{E}}_t,\phi}$ converges, as $t \to \infty$ to a proper random variable. By \cite[Lemma 5.1]{Kal}, this is enough to conclude that $\hat{\mathcal{E}}_t$ converges vaguely in law to a limiting point measure $\bar{\mathcal{E}}$. We then use that with high probability, no particle of type $2$ born before time $R$ contributes, to the extremal process of the multitype BBM. Then, by the branching property, it shows that $\bar{\mathcal{E}}$ satisfies a stability under superposition probability which, by \cite[Corollary~3.2]{Mai13}, can be identified as a decorated Poisson point process with intensity proportional to $Z^{(I)}_\infty e^{-\theta x} \dd x$.

To prove the results of this section, we make use of the following extension of \eqref{eqn:triangleShape}. For all $t \geq 0$, we write $a_t = \frac{3}{2\theta} \log(t+1)$. There exists $C>0$ such that for all $t \geq 0$ and $K>0$, we have
\begin{equation}
  \label{eqn:triangleImproved}
  \P\left(\exists s \leq t, u \in \mathcal{N}^1_s : X_u(s) \geq v s - a_t + a_{t-s} + K\right)\leq C (K+1)e^{-\theta K}.
\end{equation}
This result was proved in \cite{Mal15a} in the context of branching random walks, and has been adapted to continuous-time settings in \cite[Lemma 3.1]{Mal15c}.

We first show the tightness of the law of the number of particles of type $2$ born to the right of $m^{(I)}_t-A$.
\begin{lemma}
\label{lem:computation}
We assume that $(\beta,\sigma^2) \in \mathcal{C}_I$. For all $A,K>0$, there exists $C_{A,K} > 0$ and $\delta > 0$ such that for all $R \geq 0$, we have
\[
  \limsup_{t \to \infty} \E\left( \sum_{u \in \mathcal{N}^2_t} \ind{X_u(t) \geq m^{(I)}_t -A} \ind{T(u) \leq t-R} \ind{X_u(s) \leq vs - a_t + a_{t-s} + K, s \leq T(u)}  \right) \leq C_{A,K} e^{-\delta R}.
\]
\end{lemma}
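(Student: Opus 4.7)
The plan is to apply the multitype many-to-one lemma (Proposition~\ref{prop:manytoone2type}) to rewrite the expectation as
\[
\alpha\int_0^{t-R} e^{\beta s + (t-s)}\,\E\!\left[\ind{\mathcal A_s}\;\P\!\left(B'_{t-s}\ge m^{(I)}_t - A - \sigma B_s \,\big|\,\sigma B_s\right)\right]ds,
\]
where $B,B'$ are independent standard Brownian motions and $\mathcal A_s := \{\sigma B_r\le vr - a_t + a_{t-s}+K,\ r\le s\}$. I would first bound the inner conditional probability by the Gaussian tail $\P(B'_r\ge y)\le C\sqrt r\,(y\vee 1)^{-1} e^{-y^2/(2r)}$ and expand $y^2/(2(t-s))$ for $y = v(t-s) - \tfrac{3}{2\theta}\log t - A - \sigma B_s$. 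Using the identities $v = \sigma^2\theta$ and $v^2/2 = \beta\sigma^2$, one obtains
\[
\frac{y^2}{2(t-s)} = \beta\sigma^2(t-s) - v\sigma B_s - \tfrac{3\sigma^2}{2}\log t - vA + \frac{(\tfrac{3}{2\theta}\log t + A + \sigma B_s)^2}{2(t-s)}.
\]

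Substituting and combining with the prefactor $e^{\beta s + (t-s)}$, the integrand takes the form $C_A\, t^{3\sigma^2/2}\, e^{-\delta_0(t-s)}\,\E[e^{\beta s}e^{v\sigma B_s}\ind{\mathcal A_s}\,G_{t,s}(B_s)]$, where $\delta_0 := \beta\sigma^2 - 1 > 0$ is the decisive positive quantity guaranteed by $(\beta,\sigma^2)\in\mathcal C_I$ (both subregimes $\beta\le 1$ and $\beta>1$ force $\beta\sigma^2>1$), and $G_{t,s}(x) := e^{-(\tfrac{3}{2\theta}\log t + A + \sigma x)^2/(2(t-s))}$ is a residual Gaussian correction. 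I would then absorb the weights $e^{\beta s}e^{v\sigma B_s}$ via a Girsanov transform with drift $\theta\sigma$ on $(B_r)_{r\le s}$: setting $W_r := B_r - \theta\sigma r$, the process $W$ is standard Brownian under the tilted measure $\tilde{\P}$, the density is $d\P/d\tilde{\P} = e^{-\theta\sigma B_s + \beta s}$ (using $\theta^2\sigma^2/2 = \beta$), and $\mathcal A_s$ transforms into the constant-height barrier $\{\sigma W_r\le\kappa,\ r\le s\}$ with $\kappa := K - (a_t - a_{t-s})$. A classical Brownian ballot-type estimate together with the quadratic-in-$s$ decay hidden in $G_{t,s}$ (which, at the typical value $\sigma B_s\sim vs$ induced by Girsanov, contributes $e^{-\beta\sigma^2 s^2/(t-s)}$ and exactly cancels the exponential $s$-growth generated by the drift) then bounds the tilted expectation by $C_{A,K}(1+\kappa_+)s^{-1/2}$ times a bounded factor.

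After the change of variables $r = t-s$, and using that the specific choice $a_t = \tfrac{3}{2\theta}\log(t+1)$ ensures $t^{3\sigma^2/2}(1 + a_t - a_{t-s})$ remains polynomially bounded uniformly in $s\in[0,t-R]$, the integrand is dominated by $C_{A,K}\,e^{-\delta_0 r}$ times integrable polynomial corrections, yielding $\limsup_{t\to\infty}\E[\cdots]\le C_{A,K}\int_R^\infty e^{-\delta_0 r}\,dr\le C_{A,K}\,e^{-\delta R}$ for any $\delta<\delta_0$. The main obstacle is the delicate bookkeeping of competing polynomial-in-$(t,s,\log t)$ factors — the $t^{3\sigma^2/2}$ from exponentiating the $\log t$ correction in $m^{(I)}_t$, the $(a_t-a_{t-s})$-dependent barrier height, the $s^{-1/2}$ ballot factor, and the quadratic Gaussian decay in $G_{t,s}$ — which cancel to a bounded constant uniformly in $(s,t)$ precisely because the barrier in the statement is inherited from the improved peeling estimate \eqref{eqn:triangleImproved}. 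Verifying these cancellations is a routine but tedious computation using standard Brownian-motion estimates, directly analogous to the discrete-time analysis carried out in \cite{Mal15a,Mal15c}.
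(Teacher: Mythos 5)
Your overall architecture (many-to-one, Gaussian tail of $B_{t-s}$, Girsanov tilt at drift $\theta\sigma$, barrier estimate) parallels the paper's, but there is a genuine gap in the subregime $\sigma^2<1$ of $\mathcal{C}_I$. You declare $\delta_0=\beta\sigma^2-1>0$ to be ``the decisive positive quantity'' and assert that the residual Gaussian factor $G_{t,s}$ \emph{exactly} cancels the exponential growth in $s$ left over after the tilt. It does not. After the change of measure, the surviving endpoint weight is $e^{\sigma(v-\theta)B_s}$ with $v-\theta=\theta(\sigma^2-1)$; when $\sigma^2<1$ this coefficient is negative, the upper barrier gives no control on downward excursions, and the only restraint is the quadratic penalty in $G_{t,s}$. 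Optimizing $e^{(\theta-v)x-x^2/(2(t-s))}$ over the downward deviation $x$ of $\sigma B_s$ leaves a factor $e^{(\theta-v)^2(t-s)/2}$, so the net exponential rate per unit of $r=t-s$ is $1-\beta\sigma^2+\tfrac{1}{2}(\theta-v)^2=1-2\beta+\beta/\sigma^2$, not $-(\beta\sigma^2-1)$. This is negative if and only if $\sigma^2>\beta/(2\beta-1)$, which is precisely the part of the definition of $\mathcal{C}_I$ (for $\beta>1$, $\sigma^2<1$) that your argument never invokes: you only verify $\beta\sigma^2>1$. That weaker condition cannot suffice: for instance $(\beta,\sigma^2)=(3,0.55)$ satisfies $\beta\sigma^2>1$ but lies in $\mathcal{C}_{III}$, where the contributing particles have $T(u)\approx pt\leq t-R$, stay below $vs+K$, and reach far beyond $m^{(I)}_t$, so the quantity in the lemma does not decay in $R$. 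The paper avoids this trap by splitting into two cases and, when $\sigma^2<1$, using the suboptimal Chernoff parameter $\lambda=\theta+\delta$, which makes the required inequality $1-\theta v+\theta^2/2=1-2\beta+\beta/\sigma^2<0$ explicit.

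A secondary, also substantive, issue is the polynomial bookkeeping you defer as routine. Your claimed ballot factor $C_{A,K}(1+\kappa_+)s^{-1/2}$ is too weak to absorb the prefactor $t^{3\sigma^2/2}$: in the regime $s\asymp t$, $t-s=O(1)$ that dominates the integral, the paper's estimate \eqref{eqn:intermediateStep} supplies $Ce^{\lambda K}\left(\tfrac{t-s+1}{t+1}\right)^{3\lambda/(2\theta)}(s+1)^{-3/2}$, and it is the exponent $3/2$ (barrier crossing \emph{plus} endpoint localization forced by the exponential weight) combined with the $\left(\tfrac{t-s+1}{t+1}\right)^{3\lambda/(2\theta)}$ factor that exactly cancels $t^{3\sigma^2/2}$; with $s^{-1/2}$ a factor of order $t$ survives and the bound blows up as $t\to\infty$. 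Both defects are repairable, but they are the actual mathematical content of the lemma rather than tedious verification.
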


\begin{proof}
Let $A,K,R  >0$, we set
\[
  Y_t(A,K,R) = \sum_{u \in \mathcal{N}^2_t} \ind{X_u(t) \geq m^{(I)}_t -A} \ind{T(u) \leq t-R} \ind{X_u(s) \leq vs - a_t + a_{t-s} + K, s \leq T(u)}.
\]
We use Proposition \ref{prop:manytoone2type} to compute the mean of $Y_t(A,K,R)$ as
\begin{align*}
  \E\left( Y_t(A,K,R) \right)
  \leq & \int_0^{t-R} e^{\beta s + t-s} \P\left( \sigma B_s + B_t-B_s \geq m^{(I)}_t - A, \sigma B_r \leq vr - a_t + a_{t-r} + K, r \leq s \right)  \dd s\\
  \leq & \int_0^{t-R} \E\left( e^{-\theta \sigma B_s} F(t-s,\sigma B_s - a_t) \ind{\sigma B_r \leq a_{t-r} - a_t + K, r \leq s} \right) \dd s,
\end{align*}
using the Markov property at time $s$ and the Girsanov transform, where $F(r,x) = e^r\P(B_r \geq v r - x )$. By the exponential Markov inequality, for all $\lambda > 0$, we have $F(r,x) \leq e^{\lambda x} e^{r \left( 1 - \lambda v + \tfrac{\lambda^2}{2} \right)}$. This implies
\begin{equation}
  \label{eqn:intermediate}
  \E(Y_t(A,K,R)) \leq \int_0^{t-R} e^{(t-s)(1 - \lambda v + \tfrac{\lambda^2}{2})} (t+1)^{\tfrac{3\lambda}{2\theta}} \E\left( e^{\sigma (\lambda - \theta) B_s} \ind{\sigma B_r \leq a_{t-r} - a_t + K, r \leq s}  \right) \dd s.
\end{equation}
We now bound this quantity in two different ways depending on the sign of $\sigma^2 - 1$.

First, if $\sigma^2 > 1$, then $v > \theta$, in which case using \eqref{eqn:intermediate} with $\lambda = v$, we obtain
\[
  \E(Y_t(A,K,R)) \leq \int_0^{t-R} e^{(t-s)(1 - v^2/2)} (t+1)^{\tfrac{3v}{2\theta}} \E\left( e^{\sigma (v - \theta) B_s} \ind{\sigma B_r \leq a_{t-r} - a_t + K, r \leq s}  \right) \dd s.
\]
We now use that for all $\lambda > 0$, there exists $C > 0$ such that for all $0 \leq s \leq t$, we have
\begin{equation}
  \label{eqn:intermediateStep}
  \E\left( e^{\lambda \sigma B_{s}} \ind{\sigma B_r \leq a_{t-r} - a_{t} + K, r \leq s}  \right) \leq C e^{\lambda K} \left( \tfrac{t-s+1}{t+1} \right)^{\frac{3\lambda }{2\theta}} (s+1)^{-\tfrac{3}{2}}.
\end{equation}
This bound can be obtained by classical Gaussian estimates, rewriting
\begin{multline*}
  \E\left( e^{\lambda \sigma B_{s}} \ind{\sigma B_r \leq a_{t-r} - a_{t} + K, r \leq s}  \right)\\
  \leq Ce^{\lambda K}\left( \tfrac{t-s+1}{t+1} \right)^{\frac{3\lambda}{2\theta}} \sum_{k \geq 0} e^{-\lambda k} \P(\sigma B_{s} - a_{s} + a_{t} + K \in [-k-1,-k], \sigma B_r \leq a_{t-r} - a_{t} + K, r \leq s),
\end{multline*}
and showing that the associated probability can be bounded uniformly in $k, t$ and $s \leq t$ by $C k (s+1)^{-\tfrac{3}{2}}$, with computations similar to the ones used in \cite[Lemma~3.8]{Mal15a} for random walks. Therefore, \eqref{eqn:intermediateStep} implies that
\[
  \E(Y_t(A,K,R)) \leq C_{A,K} \int_0^{t-R} e^{(t-s)(1 - v^2/2)} \frac{(t+1)^{\tfrac{3}{2}} (t-s+1)^{\tfrac{3v}{2\theta}}}{(s+1)^{\tfrac{3}{2}}}  \dd s.
\]
As $(\beta,\sigma^2) \in \mathcal{C}_I$, we have $v = \sqrt{2\beta\sigma^2} > \sqrt{2}$, so $1 - \tfrac{v^2}{2} < 0$. As a result
\[
  \int_0^{\tfrac{t}{2}} e^{(t-s)(1 - v^2/2)} \frac{(t+1)^{\tfrac{3}{2}} (t-s+1)^{\tfrac{3v}{2\theta}}}{(s+1)^{\tfrac{3}{2}}}  \dd s \leq C e^{(1 - v^2/2)t} (t+1)^{\tfrac{5 \theta + 3v}{2\theta}},
\]
which converges to $0$ as $t \to \infty$, and
\[
  \int_{\tfrac{t}{2}}^{t-R} e^{(t-s)(1 - v^2/2)} \frac{(t+1)^{\tfrac{3}{2}} (t-s+1)^{\tfrac{3v}{2\theta}}}{(s+1)^{\tfrac{3}{2}}}  \dd s \leq C \int_R^\infty e^{(1-v^2/2)s} (s+1)^{\tfrac{3v}{2\theta}} \dd s \leq C e^{(1-v^2/2)R/2}.
\]
This completes the proof of the lemma in the case $\sigma^2 > 1$.

We now assume that $\sigma^2<1$. We have that
\[
  1 - \theta v + \frac{\theta^2}{2} = 1 - 2 \beta + \frac{\beta}{\sigma^2} =  \beta \left( \tfrac{1}{\sigma^2} - 2\right) + 1.
\]
Therefore, as long as $\sigma^2 > \frac{\beta}{2 \beta-1}$, which is the case as $\sigma^2 < 1$ and $(\beta,\sigma^2) \in \mathcal{C}_I$, we have $1 - \theta v + \tfrac{\theta^2}{2} < 0$. Therefore, for all $\delta > 0$ small enough such that
\[
  1 - (\theta+\delta) v + \tfrac{(\theta+\delta)^2}{2} < 0,
\]
using \eqref{eqn:intermediate} with $\lambda = \theta + \delta$, we have
\[
  \E(Y_t(A,K,R)) \leq \int_0^{t-R} e^{(t-s)(1 - (\theta+\delta) v + (\theta+\delta)^2/2)} (t+1)^{\tfrac{3(\theta+\delta)}{2\theta}} \E\left( e^{\sigma \delta B_s} \ind{\sigma B_r \leq a_{t-r} - a_t + K, r \leq s}  \right) \dd s.
\]
So with the same computations as above, we obtain once again that
\[
  \limsup_{t \to \infty} \E(Y_t(A,K,R)) \leq C_{A,K} e^{-\delta R},
\]
which completes the proof.
\end{proof}

Using the above computation, we deduce that with high probability, only particles of type $2$ having an ancestor of type $1$ at time $t-O(1)$ contribute substantially to the extremal process at time $t$.
\begin{lemma}
\label{lem:originI}
Assuming that $(\beta,\sigma^2) \in \mathcal{C}_{I}$, for all $A > 0$, we have
\[
  \lim_{R \to \infty} \limsup_{t \to \infty} \P(\exists u \in \mathcal{N}^2_t : T(u)\leq  t-R, X_u(t)\geq m_t^{(I)}-A) = 0.
\]
\end{lemma}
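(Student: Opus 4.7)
The plan is to combine Lemma \ref{lem:computation} with the uniform barrier estimate \eqref{eqn:triangleImproved}. The point is that Lemma \ref{lem:computation} already controls, via a first moment bound, the number of type-$2$ particles with $T(u) \leq t - R$ ending near $m^{(I)}_t$ \emph{under the additional restriction} that their type-$1$ ancestral path stays below the curve $s \mapsto vs - a_t + a_{t-s} + K$. The estimate \eqref{eqn:triangleImproved} says precisely that globally, no type-$1$ particle leaves this curve except with probability $C(K+1)e^{-\theta K}$. So by union bound, these two error terms together dominate the probability of interest.

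More concretely, fix $A > 0$ and $K, R > 0$. Let
\[
  \Omega_K := \{\exists s \leq t,\ u \in \mathcal{N}^1_s : X_u(s) > vs - a_t + a_{t-s} + K\},
\]
and let $Y_t(A,K,R)$ be the counting variable introduced in the proof of Lemma \ref{lem:computation}. On the complement $\Omega_K^c$, every type-$1$ ancestor of any type-$2$ particle satisfies the barrier constraint, so the existence of some $u \in \mathcal{N}^2_t$ with $T(u) \leq t - R$ and $X_u(t) \geq m^{(I)}_t - A$ forces $Y_t(A,K,R) \geq 1$. Hence by Markov's inequality,
\[
  \P(\exists u \in \mathcal{N}^2_t : T(u) \leq t - R,\ X_u(t) \geq m^{(I)}_t - A)
  \leq \P(\Omega_K) + \E(Y_t(A,K,R)).
\]
Applying \eqref{eqn:triangleImproved} to the first term and Lemma \ref{lem:computation} to the $\limsup$ of the second, we obtain
\[
  \limsup_{t \to \infty} \P(\exists u \in \mathcal{N}^2_t : T(u) \leq t - R,\ X_u(t) \geq m^{(I)}_t - A)
  \leq C(K+1)e^{-\theta K} + C_{A,K}\, e^{-\delta R}.
\]

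Letting first $R \to \infty$ kills the second term and leaves the upper bound $C(K+1)e^{-\theta K}$, which does not depend on $R$. Since $K$ was arbitrary, letting $K \to \infty$ then gives the claimed limit $0$, completing the proof. There is no substantial obstacle here: once Lemma \ref{lem:computation} and the improved barrier estimate \eqref{eqn:triangleImproved} are in hand, the argument is just a union bound and two successive limits in the right order ($t \to \infty$, then $R \to \infty$, then $K \to \infty$).
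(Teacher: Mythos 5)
Your proof is correct and follows essentially the same route as the paper: the paper likewise decomposes the event according to whether some type-$1$ particle exceeds the barrier $s \mapsto vs - a_t + a_{t-s} + K$ (controlled by \eqref{eqn:triangleImproved}), applies Markov's inequality to the counting variable of Lemma~\ref{lem:computation} on the complementary event, and then sends $R \to \infty$ followed by $K \to \infty$. No issues.
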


\begin{proof}
We observe that for all $K>0$, we have
\begin{multline*}
   \P(\exists u \in \mathcal{N}^2_t : T(u)\leq  t-R, X_u(t)\geq m_t^{(I)}-A)
   \leq \P(\exists s \leq t, u \in \mathcal{N}^1_s : X_u(s) \geq vs + a_t - a_{t-s})\\
   +\P(\exists u \in \mathcal{N}^2_t : T(u)\leq  t-R, X_u(t)\geq m_t^{(I)}-A, X_u(s) \leq v s - a_t + a_{t-s}, s \leq T(u)).
\end{multline*}
Then, using \eqref{eqn:triangleImproved}, the Markov inequality and Lemma \ref{lem:computation}, we obtain
\[
  \limsup_{t \to \infty} \P(\exists u \in \mathcal{N}^2_t : T(u)\leq  t-R, X_u(t)\geq m_t^{(I)}-A) \leq C (K+1)e^{-\theta K} + C_{A,K} e^{-R}.
\]
Letting $R \to \infty$ then $K \to \infty$, the proof is now complete.
\end{proof}

For all $R > 0$, we set
\[
  \hat{\mathcal{E}}^R_t := \sum_{u \in \mathcal{N}^2_t} \ind{T(u) \geq t-R} \delta_{X_u(t) - m^{(I)}_t}.
\]
We now show that $\hat{\mathcal{E}}^R_t$ converges in law as $t \to \infty$.
\begin{lemma}
\label{lem:convI}
Assume that $(\beta,\sigma^2) \in \mathcal{C}_I$, there exists $c_R > 0$ and a point measure distribution $\mathcal{D}^R$ such that for all $\phi \in \mathcal{T}$, we have
\[
  \lim_{t \to \infty} \crochet{\hat{\mathcal{E}}^R_t,\phi} = \crochet{\hat{\mathcal{E}}^R_\infty,\phi} \quad \text{ in law,}
\]
where $\hat{\mathcal{E}}^R_\infty$ is a DPPP($c_R Z_\infty e^{-\theta x} \dd x$,$\mathfrak{D}^R$).
\end{lemma}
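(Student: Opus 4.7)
The plan is to apply the branching property at time $t-R$ and reduce to the known convergence of the centered extremal process of the type-$1$ BBM alone, then identify the limit as a DPPP by a regrouping argument around the Poisson atoms of that limit.

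First, let $\bar{\mathcal{D}}^R$ denote the random point measure giving the positions at time $R$ of the type-$2$ particles of an independent two-type BBM started from a single type-$1$ particle at the origin. A type-$2$ particle contributing to $\hat{\mathcal{E}}^R_t$ has its ancestral lineage of type~$1$ up to time $t-R$, hence is a descendant of a unique $u \in \mathcal{N}^1_{t-R}$. By the branching property applied at $t-R$, together with Lemma~\ref{lem:poissonProc} applied inside the independent two-type sub-processes launched from each such $u$,
\[
  \hat{\mathcal{E}}^R_t \egaldistr \sum_{u \in \mathcal{N}^1_{t-R}} \bigl(\bar{\mathcal{D}}^R_u + X_u(t-R) - m^{(I)}_t\bigr),
\]
where $(\bar{\mathcal{D}}^R_u)$ are i.i.d.\ copies of $\bar{\mathcal{D}}^R$ independent of the type-$1$ BBM up to $t-R$, and $\mu + y$ denotes the translation of a point measure $\mu$ by $y$.

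For $\phi \in \mathcal{T}$, setting $\Psi^R_\phi(y) := -\log \E\bigl[\exp\bigl(-\crochet{\bar{\mathcal{D}}^R,\phi(\cdot + y)}\bigr)\bigr]$, this identity rewrites as
\[
  \E\bigl[e^{-\crochet{\hat{\mathcal{E}}^R_t,\phi}}\bigr] = \E\Bigl[\exp\Bigl(-\sum_{u \in \mathcal{N}^1_{t-R}} \Psi^R_\phi\bigl(X_u(t-R)-m^{(I)}_t\bigr)\Bigr)\Bigr].
\]
The function $\Psi^R_\phi$ is continuous, non-negative, bounded, and decays super-exponentially as $y \to -\infty$, by the first-moment bound $\Psi^R_\phi(y) \leq \E[\crochet{\bar{\mathcal{D}}^R,\phi(\cdot+y)}]$ and Proposition~\ref{prop:manytoone2type} combined with a Gaussian tail estimate. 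By the scaling~\eqref{eqn:scaling}, the type-$1$ BBM is a rescaled copy of the standard BBM, so its centred extremal process $\mathcal{E}^{(1)}_s := \sum_{u \in \mathcal{N}^1_s}\delta_{X_u(s)-m^{(I)}_s}$ converges in law, jointly with its maximum, to a DPPP $\mathcal{E}^{(1)}_\infty$ with intensity $c^{(1)} Z^{(1)}_\infty \theta e^{-\theta x}\,\dd x$ and decoration law $\mathfrak{D}_1$ (the images under the scaling of the standard-BBM objects from Lemma~\ref{lem:abbs}). Using $m^{(I)}_t - m^{(I)}_{t-R} \to vR$ as $t \to \infty$ and approximating $\Psi^R_\phi(\cdot - vR)$ from below by a monotone sequence of truncations in $\mathcal{T}$ (the tail error being controlled by the super-exponential decay of $\Psi^R_\phi$ and the barrier estimate~\eqref{eqn:triangleImproved}), Proposition~\ref{prop:convergencePointProcess} yields
\[
  \lim_{t \to \infty}\E\bigl[e^{-\crochet{\hat{\mathcal{E}}^R_t,\phi}}\bigr] = \E\bigl[\exp\bigl(-\crochet{\mathcal{E}^{(1)}_\infty,\,\Psi^R_\phi(\cdot - vR)}\bigr)\bigr] = \E\bigl[e^{-\crochet{\hat{\mathcal{E}}^R_\infty,\phi}}\bigr],
\]
where $\hat{\mathcal{E}}^R_\infty := \sum_{\xi \in \mathcal{E}^{(1)}_\infty}(\bar{\mathcal{D}}^R_\xi + \xi - vR)$ with i.i.d.\ $(\bar{\mathcal{D}}^R_\xi)$ independent of $\mathcal{E}^{(1)}_\infty$.

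Finally, writing $\mathcal{E}^{(1)}_\infty = \sum_j(D_j + \xi_j)$ in its Poisson-decoration form (with $(\xi_j)$ Poisson of intensity $c^{(1)} Z^{(1)}_\infty \theta e^{-\theta x}\,\dd x$ and $D_j$ i.i.d.\ of law $\mathfrak{D}_1$) and regrouping the independent $\bar{\mathcal{D}}^R_{j,d}$ attached to the atoms $\xi_j + d$ around their common Poisson point $\xi_j$, one obtains $\hat{\mathcal{E}}^R_\infty = \sum_j(\tilde{D}^R_j + \xi_j)$ with i.i.d.\ super-decorations $\tilde{D}^R_j := \sum_{d \in D_j}(\bar{\mathcal{D}}^R_{j,d} + d - vR)$. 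Re-centring each $\tilde{D}^R_j$ at its maximum $M^R_j$ and absorbing the shift into the exponential Poisson intensity identifies $\hat{\mathcal{E}}^R_\infty$ as a DPPP with intensity $c_R Z^{(1)}_\infty e^{-\theta x}\,\dd x$, where $c_R := c^{(1)}\theta\,\E[e^{\theta M^R}]$, and decoration law $\mathfrak{D}^R$ equal to the $e^{\theta M^R}$-size-biased law of $\tilde{D}^R - M^R$. The main technical point is to establish $\E[e^{\theta M^R}] \in (0,\infty)$: the union bound $\E[e^{\theta M^R} \mid D] \leq \E[e^{\theta \max \bar{\mathcal{D}}^R}]\,\sum_{d \in D} e^{\theta d}$ reduces the question to the almost-sure finiteness of $\sum_{d \in D}e^{\theta d}$ for $D \sim \mathfrak{D}_1$ (a classical integrability property of the BBM decoration at its critical parameter), combined with the bound $\E[e^{\theta \max \bar{\mathcal{D}}^R}] \leq e^{R(1 + \theta^2/2)}$ obtained via the additive martingale inequality $e^{\theta \max} \leq \sum_{u \in \mathcal{N}^{(2)}_R} e^{\theta X_u(R)}$ and the many-to-one lemma.
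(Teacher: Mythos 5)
Your overall route is the same as the paper's: condition on $\mathcal{F}^1_{t-R}$, write $\hat{\mathcal{E}}^R_t$ as the type-$1$ population at time $t-R$ decorated by i.i.d.\ copies of the type-$2$ cloud $\bar{\mathcal{D}}^R$ (the paper's $\bar{\mathcal{E}}_R$), express the Laplace functional through $\Psi^R_\phi = -\log F_R$, and pass to the limit using the DPPP convergence of the type-$1$ extremal process. The one place you genuinely diverge in the convergence step is the control of the left tail when extending from truncations in $\mathcal{T}$ to the full $\Psi^R_\phi$: the paper invokes the a.s.\ finiteness of $\crochet{\mathcal{E}^1,e_{\theta+\delta}}$ (Madaule/CHL19) plus monotone convergence, while you propose a barrier-plus-first-moment bound. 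That can be made to work (it is essentially Lemma~\ref{lem:computation} with the indicator replaced by a weight decaying faster than $e^{\theta x}$ at $-\infty$), but note that \eqref{eqn:triangleImproved} only controls the upper envelope of trajectories; the actual tail bound comes from the restricted first moment as in \eqref{eqn:intermediateStep}, which you would need to spell out.

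The genuine gap is in your final identification of the DPPP intensity, specifically the claim $\E[e^{\theta M^R}]<\infty$. Your union bound $\E[e^{\theta M^R}\mid D]\leq \E[e^{\theta\max\bar{\mathcal{D}}^R}]\sum_{d\in D}e^{\theta d}$ is taken at the \emph{critical} exponent $\theta$ of the type-$1$ BBM, and there $\E[\crochet{\mathfrak{D}_1,e_\theta}]=\infty$: the expected level-set counts of the decoration grow like $e^{\theta u}$ (this is exactly why the paper needs the margin $\delta>0$ in $\crochet{\mathcal{E}^1,e_{\theta+\delta}}<\infty$, the critical case $\delta=0$ being false). So taking expectations of your bound gives nothing. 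Moreover, ``almost-sure finiteness of $\sum_{d\in D}e^{\theta d}$'' only yields $\E[e^{\theta M^R}\mid D]<\infty$ a.s., which is strictly weaker than the unconditional $\E[e^{\theta M^R}]<\infty$ required for the displacement theorem to produce a locally finite Poisson process with intensity proportional to $\E[e^{\theta M^R}]e^{-\theta x}\dd x$. The conclusion $\E[e^{\theta M^R}]<\infty$ is nevertheless true, and the repair is to run the union bound at an exponent $\theta+\delta$: since $\E[\crochet{\mathfrak{D}_1,e_{\theta+\delta}}]<\infty$ and $\E[e^{(\theta+\delta)\max\bar{\mathcal{D}}^R}]\leq\E[\crochet{\bar{\mathcal{D}}^R,e_{\theta+\delta}}]<\infty$ by Proposition~\ref{prop:manytoone2type}, one gets $\P(M^R>x)\leq C e^{-(\theta+\delta)x}$, hence $\E[e^{\theta M^R}]<\infty$. (For what it is worth, the paper's own proof stops at ``the limit is $\mathcal{E}^1$ with each atom replaced by an independent copy of $\bar{\mathcal{E}}_R$'' and does not carry out this size-biasing explicitly; the law of the full limit $\bar{\mathcal{E}}$ is ultimately pinned down via Maillard's superposition characterization in the proof of Theorem~\ref{thm:mainI}.)
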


\begin{proof}
We can rewrite
\begin{equation*}
  \hat{\mathcal{E}}^R_t = \sum_{u \in \mathcal{N}^1_{t-R}} \sum_{\substack{u' \in \mathcal{N}^2_t\\u' \succcurlyeq u}} \delta_{X_{u'}(t) - X_{u}(t-R) + X_u(t-R) - m^{(I)}_t}  = \sum_{u \in \mathcal{N}^1_{t-R}} \tau_{X_u(t-R) - m^{(I)}_t} \hat{\mathcal{E}}^{(u)}_R,
\end{equation*}
where $\tau_z$ is the operator of translation by $z$ of point measures, and $\hat{\mathcal{E}}^{(u)}_R$ is the point process of descendants of type $2$ of individual $u \in \mathcal{N}_{t-R}$ at time $t$, centred around the position of $u$ at time $t-R$. Note that conditionally on $\mathcal{F}^1_{t-R}$, $(\hat{\mathcal{E}}^{(u)}_R, u \in \mathcal{N}_{t-R})$ are i.i.d. point measures with same law as
$
  \bar{\mathcal{E}}_R := \sum_{u \in \mathcal{N}^2_R} \delta_{X_u(R)}.
$

Let $\phi \in \mathcal{T}$, we set $L \in \R$ such that $\phi(x) = 0$ for all $x \leq L$. By the branching property, we have
\begin{align*}
  \E\left( e^{-\crochet{\hat{\mathcal{E}}^R_t,\phi}} \right) = \E\left( \prod_{u \in \mathcal{N}^1_{t-R}} F_R(X_u(t-R)-m^{(I)}_t) \right)
  = \E\left( e^{-\sum_{u \in \mathcal{N}^1_{t-R}} -\log F_R(X_u(t-R)-m^{(I)}_t)} \right),
\end{align*}
where $F_R(x) = \E\left( \exp\left(- \sum_{u \in \mathcal{N}^2_R} \phi(x + X_u(R))\right) \right)$. Observe that by Jensen transform, we have
\begin{align*}
  - \log F_R(x) &\leq \E\left( \sum_{u \in \mathcal{N}^2_R} \phi(x + X_u(R)) \right) \leq  \int_0^R e^{\beta s + (R-s)} \E(\phi(x + \sigma B_s + B_t - B_s)) \dd s\\
  &\leq ||\phi||_\infty R e^{(\beta +1)R} \P\left(B_1 \geq \tfrac{-x}{\sqrt{R(\sigma^2 + 1)}}\right).
\end{align*}
Therefore, by \eqref{eqn:gaussianEstimate}, we have $-\log F_R(x) \leq C_R e^{(\theta + \delta)x} \wedge 1$ for all $x \in \R$.

By Lemma \ref{lem:abbs}, recall that $\sum_{u \in \mathcal{N}^1_{t-R}} \delta_{X_u(t-R) - m^{(I)}_t}$ converges vaguely in law to a DPPP $\mathcal{E}^1$ with intensity $c_\star \theta Z_\infty e^{-\theta (z+vR)} \dd z$ and decoration law $\mathfrak{D}_{\beta,\sigma^2}$ the law of the decoration point measure of the BBM with branching rate $\beta$ and variance $\sigma^2$. Additionally, it was proved by Madaule \cite{Mad17} in the context of branching random walks, and extended in \cite{CHL19} to BBM settings, that $\crochet{\mathcal{E}^1,e_{\theta + \delta}} < \infty$ a.s. for all $\delta > 0$, where $e_{\theta + \delta}(x) = e^{(\theta + \delta)x}$. As a result, using the monotone convergence theorem, we obtain
\[
  \lim_{t \to \infty} \E\left( e^{-\crochet{\hat{\mathcal{E}}^R_t,\phi}} \right) = \E\left( e^{-\crochet{\mathcal{E}^1,-\log F_R}} \right).
\]
This proves that $\hat{\mathcal{E}}^R_t$ converges in law, as $t \to \infty$, to a point process that can be obtained from $\mathcal{E}^1$ by replacing each atom of $\mathcal{E}^1$ by an independent copy of the point measure $\bar{\mathcal{E}}_R$.
\end{proof}

We now complete the proof of Theorem~\ref{thm:mainI}.
\begin{proof}[Proof of Theorem~\ref{thm:mainI}]
Let $\phi \in \mathcal{T}$. We fix $A > 0$ such that $\phi(x) = 0$ for all $x \leq -A$. We observe that
\[
   0 \leq \E\left( e^{- \crochet{\hat{\mathcal{E}}^R_t,\phi}} \right) - \E\left( e^{-\crochet{\hat{\mathcal{E}}_t,\phi}} \right) \leq \P(\exists u \in \mathcal{N}^2_t : T(u)\leq  t-R, X_u(t)\geq m_t^{(I)}-A),
\]
which goes to $0$ as $t$ then $R \to \infty$, by Lemma \ref{lem:originI}. Additionally, by Lemma \ref{lem:convI}, we have
\[
  \lim_{t \to \infty} \E\left( e^{-\crochet{\hat{\mathcal{E}}^R_t,\phi}} \right) = \E\left( e^{-\crochet{\hat{\mathcal{E}}^R_\infty,\phi}} \right).
\]
Moreover, using that $R \mapsto \E\left( e^{-\crochet{\hat{\mathcal{E}}^R_t,\phi}} \right)$ is decreasing, we deduce that
\begin{equation}
  \label{eqn:approximation}
  \lim_{t \to \infty}  \E\left( e^{- \crochet{\hat{\mathcal{E}}_t,\phi}} \right) = \lim_{R\to \infty} \E\left( e^{-\crochet{\hat{\mathcal{E}}^R_\infty,\phi}} \right).
\end{equation}
Additionally, as $R \mapsto \hat{\mathcal{E}}^R_t$ is increasing in the space of point measures, we observe that we can construct the family of point measures $(\hat{\mathcal{E}}^R_\infty, R \geq 0)$ on the same probability space in such a way that almost surely, $\crochet{\hat{\mathcal{E}}^R_\infty, \phi}$ is increasing for all $\phi$. We denote by $\mu(\phi)$ its limit.

By \cite[Lemma 5.1]{Kal}, to prove that $\hat{\mathcal{E}}_t$ admits a limit in distribution for the topology of vague convergence, it is enough to show that for all non-negative continuous functions with compact support, $\crochet{\hat{\mathcal{E}}_t,\phi}$ admits a limit in law which is a proper random variable. By \eqref{eqn:approximation}, using the monotonicity of $\hat{\mathcal{E}}^R_\infty$, we immediately obtain that $\lim_{t \to \infty} \crochet{\hat{\mathcal{E}}_t,\phi} = \mu(\phi)$ in law. Therefore, to prove that $\hat{\mathcal{E}}_t$ converges vaguely in distribution, it is enough to show that for all $\phi \in \mathcal{T}$, $\mu(\phi) < \infty$ a.s. which is a consequence of the tightness of $\crochet{\hat{\mathcal{E}}_t,\phi}$.

Let $\phi \in \mathcal{T}$, we write $L \in \R$ such that $\phi(x) =0$ for all $x < L$. For all $A > 0$ and $K>0$, we have
\begin{multline*}
  \P\left(\crochet{\hat{\mathcal{E}}_t,\phi} \geq A\right) \leq \P(\exists s \leq t, \exists u \in \mathcal{N}_s : X_u(s) \geq vs - a_t + a_{t-s}+K)\\
  + \frac{1}{A}\E\left( \crochet{\hat{\mathcal{E}}_t,\phi} \ind{\max_{u \in \mathcal{N}^1_s }X_u(s) \leq vs -a_t+a_{t-s}+K, s \leq t} \right).
\end{multline*}
The first quantity goes to $0$ as $K \to \infty$ by \eqref{eqn:triangleImproved}. Therefore, for all $\epsilon> 0$, we can fix $K$ large enough so that it remains smaller than $\epsilon/2$. Then, using Lemma \ref{lem:computation} with $R = 0$, we have
\[
  \frac{1}{A}\E\left( \crochet{\hat{\mathcal{E}}_t,\phi} \ind{\max_{u \in \mathcal{N}^1_s }X_u(s) \leq vs -a_t+a_{t-s}+K, s \leq t} \right) \leq \frac{C_{L,K}}{A}.
\]
Therefore, we can choose $A$ large enough such that for all $t \geq 0$, $\P(\crochet{\hat{\mathcal{E}}_t,\phi} \geq A) \leq \epsilon$, which completes the proof of the tightness of $\crochet{\hat{\mathcal{E}}_t,\phi}$.

We then conclude that $\hat{\mathcal{E}}_t$ converges vaguely in law as $t \to \infty$ to a limiting point measure that we write $\bar{\mathcal{E}}$. This point measure also is the limit as $R \to \infty$ of $\hat{\mathcal{E}}^R_\infty$, by \eqref{eqn:approximation}. This allows us to show that $\crochet{\hat{\mathcal{E}}_t,\phi} \to \crochet{\bar{\mathcal{E}},\phi}$ in law for all $\phi \in \mathcal{T}$, so we conclude by Proposition~\ref{prop:convergencePointProcess} that the position of the rightmost atom in $\hat{\mathcal{E}}_t$ also converges to the position of the rightmost atom in $\bar{\mathcal{E}}$.

To complete the proof of Theorem~\ref{thm:mainI}, we have to describe the law of $\bar{\mathcal{E}}$. For all $s \geq 0$, using the branching property, we have
\[
  \hat{\mathcal{E}}_t = \sum_{u \in \mathcal{N}_s} \tau_{X_u(s) - vs + a_{t-s} - a_t} \hat{\mathcal{E}}^{(u)}_{t-s}
\]
where conditionally on $\mathcal{F}_s$, $(\hat{\mathcal{E}}^{(u)}_{t-s}, u \in \mathcal{N}_s)$ is a family of independent point measures with same law as $\hat{\mathcal{E}}_{t-s}$, under law $\P^{(1)}$ or $\P^{(2)}$ depending on the type of $u$. As no particle of type $2$ born at an early time will have a descendant contributing in the extremal process by Lemma~\ref{lem:originI}, we obtain that, letting $t \to \infty$,
\begin{equation}
  \label{eqn:branchingProp}
  \bar{\mathcal{E}} \egaldistr \sum_{u \in \mathcal{N}^1_s} \tau_{X_u(s) - vs} \bar{\mathcal{E}}^{(u)},
\end{equation}
where $\bar{\mathcal{E}}^{(u)}$ are i.i.d. copies of $\bar{\mathcal{E}}$, that are further independent of $\mathcal{F}_s$. This superposition property characterizes the law of $\bar{\mathcal{E}}$ as a decorated Poisson point process with intensity proportional to $e^{-\theta x}\dd x$, shifted by the logarithm of the derivative martingale of the branching Brownian motion by \cite[Corollary 3.2]{Mai13}, with similar computations as in \cite[Section 2.2]{Mad17}. A general study of such point measures satisfying the branching property \eqref{eqn:branchingProp} is carried out in \cite{MM}.
\end{proof}

\section{Asymptotic behaviour in the anomalous spreading case}
\label{sec:anomalous}

We assume in this section that $(\sigma^2,\beta) \in \mathcal{C}_{III}$, i.e. that $\beta + \sigma^2 > 2$ and $\sigma^2 < \frac{\beta}{2 \beta -1}$. In particular, it implies that $\beta > 1$ and $\sigma^2 < 1$. Under these conditions, we set
\[
  \theta := \sqrt{2 \frac{\beta - 1 }{1-\sigma^2}}, \quad a := \sigma^2 \theta, \quad b := \theta \quad \text{and} \quad p := \frac{\sigma^2 + \beta - 2}{2(\beta-1)(1-\sigma^2)},
\]
which are the values of $a$, $b$ and $p$ solutions of \eqref{eqn:optimizationProblem}, described in terms of the parameter $\theta$ which plays the role of a Lagrange multiplier in the optimization problem. Note that $a < \sqrt{2 \beta \sigma^2}$, $b > \sqrt{2}$ and $p \in (0,1)$. Recall that in this situation, the maximal displacement is expected to satisfy
\[
  m^{(III)}_t = v t, \quad \text{where }\  v = a p + b (1-p) = \frac{\beta - \sigma^2}{\sqrt{2 (\beta - 1)(1 - \sigma^2)}}.
\]
As in the previous sections, we set
\[
  \hat{\mathcal{E}}_t = \sum_{u \in \mathcal{N}_t^{2}} \delta_{X_u(t) - m^{(III)}_t}
\]
the appropriately centred extremal process of particles of type $2$.

As mentioned in Section~\ref{subsec:stateoftheart}, under the above assumption, we are in the anomalous behaviour regime. In this regime, we have $v >\max(\sqrt{2},\sqrt{2\beta\sigma^2})$, in other words, this furthest particle travelled at a larger speed than the ones observed in the BBM of particles of type $1$, or in a BBM of particles of type $2$. Moreover, given the heuristic explanation for \eqref{eqn:optimizationProblem}, we expect the furthest particle $u$ of type $2$ at time $t$ to satisfy $T(u) \approx pt$ and $X_u(T(u)) \approx a p t$.

The idea of the proof of Theorem~\ref{thm:mainIII} is to show that this heuristic holds, and that all particles participating to the extremal process of the multitype BBM are of type~$2$, and satisfy $T(u) \approx p t$ and $X_{u}(T(u)) \approx a pt$. We then use the asymptotic behaviour of the growth rate of the number of particles of type $1$ growing at speed $a$ to complete the proof.
We begin by proving that with high probability, there is no particle of type $2$ far above level $m^{(III)}_t$ at time~$t$. 
\begin{lemma}
\label{lem:firstStepIII}
Assuming that $(\sigma^2,\beta) \in \mathcal{C}_{III}$, we have
$\displaystyle 
  \lim_{A \to \infty} \limsup_{t \to \infty} \P\left(\exists u \in \mathcal{N}^2_t : X_u(t)\geq m_t^{(III)}+A\right) = 0.
$
\end{lemma}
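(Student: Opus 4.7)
The plan is a first-moment argument using the multitype many-to-one lemma (Proposition~\ref{prop:manytoone2type}) followed by Markov's inequality. Writing $N_t(A) = \#\{u \in \mathcal{N}_t^2 : X_u(t) \geq m_t^{(III)} + A\}$, Proposition~\ref{prop:manytoone2type} applied to $f(\cdot, s) = \ind{\text{final position} \geq vt + A}$ gives
\[
  \E(N_t(A)) = \alpha \int_0^t e^{\beta s + (t-s)}\, \P\!\left(W_s \geq vt + A\right) \dd s,
\]
where $W_s := \sigma B_s + (B_t - B_s)$ is centred Gaussian with variance $V(s) = \sigma^2 s + (t-s) = t(1 - (s/t)(1-\sigma^2))$. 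Setting $p = s/t$, the exponential behaviour of the integrand is governed by the phase function
\[
  h(p) := \beta p + (1-p) - \frac{v^2}{2(1-p(1-\sigma^2))}.
\]
A direct calculation using $v^2 = (\beta-\sigma^2)^2/(2(\beta-1)(1-\sigma^2))$ shows that $h$ is strictly concave on $[0,1]$, with $h(0) = 1 - v^2/2 < 0$ and $h(1) = \beta - v^2/(2\sigma^2) < 0$ (since $(\beta,\sigma^2) \in \mathcal{C}_{III}$ forces $v > \max(\sqrt{2},\sqrt{2\beta\sigma^2})$), and attains its unique maximum at $p^*$, where $h(p^*) = 0$. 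This is the analytic counterpart of the optimisation problem \eqref{eqn:optimizationProblem}.

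First I would apply the sharp Gaussian tail bound $\P(W_s \geq vt + A) \leq \frac{C}{\sqrt{t}} \exp\!\left(-(vt+A)^2/(2V(s))\right)$, which holds uniformly in $s \in [0,t]$ and $A \geq 0$ since $vt + A \geq c\sqrt{t\,V(s)}$ for $t$ large. Expanding the square and rewriting in terms of $p = s/t$ produces
\[
  \E(N_t(A)) \leq \frac{C\alpha}{\sqrt{t}} \int_0^t \exp\!\left( t\,h(s/t) - \frac{vA}{1-(s/t)(1-\sigma^2)} - \frac{A^2}{2V(s)} \right) \dd s.
\]
A short computation shows $v/(1-p^*(1-\sigma^2)) = \theta$, so the penalty in $A$ is approximately $e^{-\theta A}$ in a neighbourhood of $p^*$.

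Next I would split the integral according to whether $|s/t - p^*| \leq \delta$ or not. On the complement, $h$ is bounded above by some $-c(\delta) < 0$, contributing a negligible $e^{-c(\delta)t}$ term. On the neighbourhood, I would use the Taylor expansion $h(p) = -\tfrac12 |h''(p^*)|(p-p^*)^2 + O((p-p^*)^3)$ together with the continuity of $p \mapsto v/(1-p(1-\sigma^2))$ to obtain, after the change of variables $s = p^* t + \sqrt{t}\, u$, a standard Laplace-type estimate
\[
  \int_{|s/t-p^*| \leq \delta} \exp\!\left(t\,h(s/t) - \tfrac{vA}{1-(s/t)(1-\sigma^2)}\right)\dd s \leq C\sqrt{t}\, e^{-(\theta - \eta(\delta))A},
\]
with $\eta(\delta) \to 0$ as $\delta \to 0$. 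Combining gives $\E(N_t(A)) \leq C e^{-(\theta-\eta(\delta))A}$, uniformly in $t$, and Markov's inequality followed by letting first $t \to \infty$, then $\delta \to 0$, then $A \to \infty$ completes the proof.

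The only subtle point is the interplay between the $\sqrt{t}$ factor coming from the saddle-point integration and the $1/\sqrt{t}$ gain from the sharp Gaussian tail: with the crude bound $\P(W_s \geq vt+A) \leq \exp(-(vt+A)^2/(2V(s)))$ one obtains only $\E(N_t(A)) \leq C\sqrt{t}\, e^{-\theta A}$, which does not close under Markov. The sharp Gaussian tail is therefore essential, and the fact that $h(p^*) = 0$ exactly (which is equivalent to the definition of $v$ as the speed solving the optimisation problem) is what makes the argument balance.
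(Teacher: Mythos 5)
Your proposal is correct and follows essentially the same route as the paper: a first-moment bound via the multitype many-to-one lemma, the sharp Gaussian tail estimate giving the crucial $t^{-1/2}$ factor, a Laplace-method analysis of the same phase function (the paper's $\phi$ equals your $h$, concave with maximum $0$ at $p^*$), and Markov's inequality. The only cosmetic difference is that the paper bounds the $A$-penalty uniformly by $e^{-vA}$ using $\sigma^2 u + 1 - u \leq 1$ and applies the global quadratic bound $\phi(u)\leq-\delta(u-p)^2$ on all of $[0,1]$, rather than splitting around $p^*$ to extract the sharper exponent $\theta-\eta(\delta)$; both yield a $t$-uniform bound $Ce^{-cA}$ with $c>0$, which is all that is needed.
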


\begin{proof}
The proof of this result is based on a first moment method. For $A> 0$, we compute, using the many-to-one lemma, the mean of
$
  X_t(A) = \sum_{u \in \mathcal{N}_t^2} \ind{X_u(t) \geq m_t^{(III)}+A}.
$
Using \eqref{eqn:gaussianEstimate}, there exists $C>0$ such that for all $t \geq 1$, we have
\begin{align*}
  \E(X_t(A))
  &= \int_0^t e^{\beta s + t-s} \P\left(\sigma B_s + (B_t- B_s) \geq m_t^{(III)}+A\right) \dd s\\
  &\leq \int_0^t e^{\beta s + (t-s)} \frac{C\sqrt{\sigma^2 s + t-s}}{(vt + A)} e^{- \frac{(v t + A )^2}{2(\sigma^2 s + t-s)}}  \dd s \leq C t^{-1/2} \int_0^t e^{\beta s +(t-s)} e^{- \frac{(v t + A )^2}{2(\sigma^2 s + t-s)}} \dd s.
\end{align*}
Therefore, setting $\phi  : u \mapsto \beta u + 1-u - \frac{v^2}{2(\sigma^2 u + 1-u)}$, by change of variable we have, for all $t$ large enough
\begin{equation*}
  \E(X_t(A)) \leq C t^{1/2} \int_0^1 \exp\left( t \phi(u) \right) e^{- A \frac{v}{(\sigma^2 u + 1-u)}} \dd u.
\end{equation*}

We observe that
\begin{equation*}
  \phi'(u) = \beta - 1 - (1 - \sigma^2) \frac{v^2}{2 (\sigma^2 u + 1-u)^2}\quad \text{ and }\quad
  \phi''(u) = -2 (1 - \sigma^2)^2 \frac{v^2}{2(\sigma^2 u + 1-u)^3},
\end{equation*}
hence $\phi$ is concave, and maximal at point $u =p$, with a maximum equal to $0$. By Taylor expansion, there exists $\delta > 0$ such that $\phi(u) \leq -\delta (u-p)^2$ for all $u \in [0,1]$. Therefore, we have
\[
  \E(X_t(A)) \leq C e^{-A v}  t^{1/2} \int_0^1 e^{-\delta (u-p)^2 t} \dd u \leq C e^{-A v} \sqrt{\pi/\delta}.
\]

As a result, applying the Markov inequality, we have
\[
  \P\left(\exists u \in \mathcal{N}^2_t : X_u(t)\geq m_t^{(III)}+A\right) = \P(X_t(A) \geq 1) \leq \E(X_t(A)),
\]
thus there exists $C > 0$ such that
\[
  \limsup_{t \to \infty}  \P\left(\exists u \in \mathcal{N}^2_t : X_u(t)\geq m_t^{(III)}+A\right) \leq C e^{-A v},
\]
which converges to $0$ as $A \to \infty$.
\end{proof}

Next, we show that every particle of type $2$ that contributes to the extremal process of the BBM branched from a particle of type $1$ at a time and position close to $(pt,apt)$.
\begin{lemma}
\label{lem:originIII}
Assuming that $(\sigma^2,\beta) \in \mathcal{C}_{III}$, for all $A > 0$, we have
\begin{align}
  \label{eqn:equation1}
  &\lim_{R \to \infty} \limsup_{t \to \infty} \P(\exists u \in \mathcal{N}^2_t : X_u(t)\geq m_t^{(III)}-A, |T(u) - pt| \geq R t^{1/2}) = 0,\\
  \label{eqn:equation2}
  \text{and} \quad &\lim_{R \to \infty} \limsup_{t \to \infty} \P(\exists u \in \mathcal{N}^2_t : X_u(t)\geq m_t^{(III)}-A, |X_u(T(u)) - apt| \geq R t^{1/2}) = 0.
\end{align}
\end{lemma}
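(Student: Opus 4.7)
The strategy is a first-moment computation following the pattern of Lemma \ref{lem:firstStepIII}, using the multitype many-to-one formula of Proposition \ref{prop:manytoone2type}. For \eqref{eqn:equation1}, I would apply Proposition \ref{prop:manytoone2type} to express
$$\E\left(\sum_{u \in \mathcal{N}_t^2} \ind{X_u(t) \geq m^{(III)}_t - A,\ |T(u) - pt| \geq R t^{1/2}}\right) = \alpha \int_{|s - pt| \geq Rt^{1/2}} e^{\beta s + (t-s)} \P\left(\sigma B_s + (B_t - B_s) \geq v t - A\right) \dd s,$$
and then apply verbatim the Gaussian tail estimate and Taylor expansion from Lemma \ref{lem:firstStepIII} to bound this by $C e^{A v} t^{1/2} \int_{|u - p| \geq R t^{-1/2}} e^{-\delta (u - p)^2 t}\,\dd u$, where $\delta > 0$ is the quadratic-decay constant of the rate function $\phi(u) = \beta u + (1-u) - v^2/[2(\sigma^2 u + 1-u)]$ at its interior maximizer $p$. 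After the change of variable $w = (u - p) t^{1/2}$ this becomes $C e^{Av}\int_{|w|\geq R} e^{-\delta w^2}\,\dd w$, which tends to $0$ as $R \to \infty$.

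For \eqref{eqn:equation2}, I would carry out a joint Laplace-type estimate on the pair (branching time, branching position). By \eqref{eqn:equation1} it suffices to control the first moment further restricted to $|T(u) - pt| \leq R_0 t^{1/2}$, with $R_0$ a fixed but arbitrarily large constant. Proposition \ref{prop:manytoone2type} expresses this restricted moment as
$$\alpha \int_{|s - pt|\leq R_0 t^{1/2}} e^{\beta s + (t-s)} \int_{|x - apt| \geq Rt^{1/2}} \frac{e^{-x^2/(2\sigma^2 s)}}{\sqrt{2\pi \sigma^2 s}} \P(B_{t-s} \geq vt - A - x)\,\dd x\,\dd s.$$
Applying the standard Gaussian tail to $\P(B_{t-s}\geq\cdot)$ and the substitutions $s = ut$, $x = yt$, the integrand takes the form of a polynomial prefactor times $e^{t \Phi(u, y) + O(A)}$, with
$$\Phi(u, y) = \beta u + (1-u) - \frac{y^2}{2\sigma^2 u} - \frac{(v - y)^2}{2(1 - u)}.$$

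A direct computation identifies the unique interior maximizer of $\Phi$ as $(p, ap)$, at which $\Phi$ vanishes, and shows that the Hessian there is negative definite; its determinant works out to $\theta^2(1-\sigma^2)^2/[\sigma^2 p(1-p)]$ and its $uu$-entry is strictly negative. Hence locally $\Phi(u,y) \leq -c_1(u-p)^2 - c_2(y-ap)^2$, and globally this extends using that $\sup_y \Phi(u,y) \leq -\delta(u-p)^2$ (from Lemma \ref{lem:firstStepIII}) and the strict concavity of $y \mapsto \Phi(u,y)$ around its optimizer $y^*(u)$, which satisfies $|y^*(u) - ap| = O(|u-p|)$. Substituting $u = p + r t^{-1/2}$, $y = ap + z t^{-1/2}$ and decomposing $|y - ap| \leq |y - y^*(u)| + |y^*(u) - ap|$ translates the constraint $|y - ap| \geq R t^{-1/2}$ (on the event $|u-p| \leq R_0 t^{-1/2}$) to $|z| \geq R'$ for some $R'$ of order $R$, so that the restricted first moment is dominated by $C e^{Av}\int_{|r| \leq R_0} \int_{|z| \geq R'} e^{-c_1 r^2 - c_2 z^2}\,\dd z\,\dd r$, which vanishes as $R \to \infty$.

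The main obstacle is the two-dimensional nature of the saddle-point argument: unlike in Lemma \ref{lem:firstStepIII}, one cannot first eliminate the spatial coordinate via an exponential Markov bound and then concentrate in time, because one is asking precisely for the joint concentration of both coordinates. The argument therefore comes down to verifying that $(p, ap)$ is a non-degenerate interior critical point of $\Phi$ on $(0,1) \times \R$ with negative-definite Hessian, which is guaranteed by the hypothesis $(\beta,\sigma^2) \in \mathcal{C}_{III}$ via $p \in (0,1)$ and $a \in (0,\sqrt{2\beta\sigma^2})$.
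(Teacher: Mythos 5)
Your proof of \eqref{eqn:equation1} is essentially the paper's: a first-moment bound via Proposition~\ref{prop:manytoone2type} followed by the one-dimensional Laplace estimate $\phi(u)\le -\delta(u-p)^2$ established in Lemma~\ref{lem:firstStepIII}. (The paper additionally truncates the terminal position into the window $[m_t^{(III)}-A,\,m_t^{(III)}+K]$ using Lemma~\ref{lem:firstStepIII}; this is not needed for the first moment here, it only changes the constant $e^{Av}$ into $e^{Av/\sigma^2}$.) For \eqref{eqn:equation2} you take a genuinely different route. The paper first restricts, via Lemma~\ref{lem:firstStepIII} and \eqref{eqn:equation1}, to particles with terminal position in an $O(1)$ window and $|T(u)-pt|\le Kt^{1/2}$, and then performs an exponential tilting by $e^{\theta(\cdot)}$: the identities $\beta s+(t-s)-\tfrac{\theta^2}{2}(\sigma^2 s+t-s)=2(\beta-1)(pt-s)$ and $\theta(b-a)(pt-s)=-2(\beta-1)(s-pt)$ make the exponential factors cancel exactly on that window, after which the deviation of $\sigma B_s$ from its conditional mean given the endpoint is an independent centred Gaussian of variance $O(t)$, yielding directly a $\P(|B_1|\ge cR)$ tail. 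You instead run a two-dimensional Laplace method on the joint rate function $\Phi(u,y)$; your identification of the optimizer $y^*(u)=\sigma^2 uv/(\sigma^2 u+1-u)$ with $y^*(p)=ap$, the relation $\sup_y\Phi(u,y)=\phi(u)$, and the Hessian data at $(p,ap)$ (in particular the determinant $\theta^2(1-\sigma^2)^2/(\sigma^2 p(1-p))$) are all correct, and since $\partial_{yy}\Phi=-\tfrac{1}{\sigma^2 u}-\tfrac{1}{1-u}\le -2$ uniformly, the global quadratic bound in $y$ that you invoke does hold. Both approaches work. The one point your sketch leaves unaddressed is the region $x\ge vt-A$ (i.e.\ $y\ge v-A/t$), where the Gaussian tail bound on $\P(B_{t-s}\ge vt-A-x)$ is vacuous; there one bounds that probability by $1$ and uses that the density factor alone gives the rate $\beta u+1-u-y^2/(2\sigma^2 u)\le \beta p+1-p-v^2/(2\sigma^2 p)<0$ strictly, uniformly for $u$ near $p$. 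The paper's prior truncation of the terminal position sidesteps this regime automatically, which is the main practical advantage of the tilting argument; your version is more transparent about why $(p,ap)$ is the joint concentration point, at the cost of this extra (routine) case and of slightly more careful bookkeeping of the polynomial prefactors.
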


\begin{proof}
Let $A > 0$ and $\epsilon > 0$. By Lemma \ref{lem:firstStepIII}, there exists $K>0$ such that with probability $(1-\epsilon)$ no particle of type $2$ is above level $m^{(III)}_t + K$ at time $t$ for all $t$ large enough. For $R > 0$, we now compute the mean of
\[
  Y^{(1)}_t(A,K,R) = \sum_{u \in \mathcal{N}_t^2} \ind{X_u(t) - m_t^{(III)} \in [-A,K]} \ind{|T(u) - pt| \geq R t^{1/2}}.
\]
By Proposition \ref{prop:manytoone2type}, setting $I_t(R) = [0,t]\backslash [pt-R t^{1/2},pt+Rt^{1/2}]$ we have
\begin{align*}
  \E\left(Y^{(1)}_t(A,K,R)\right) &= \int_{I_t(R)} e^{\beta s + (t-s)} \P\left( \sigma B_s + (B_t-B_s) - m_t^{(III)}  \in [- A,K]\right) \dd s\\
  &\leq C e^{Av} t^{1/2} \left( \int_0^{p - Rt^{-1/2}} e^{t \phi(u)} \dd u + \int_{p + Rt^{-1/2}}^1 e^{t \phi(u)} \dd u \right),
\end{align*}
using the same notation and computation techniques as in the proof of Lemma \ref{lem:firstStepIII}. Thus, using again that there exists $\delta > 0$ such that $\phi(u) \leq - \delta(u-p)^2$ for some $\delta > 0$, by change of variable $z = t^{1/2}(u-p)$ we obtain that
\[
  \E\left(Y^{(1)}_t(A,K,R)\right) \leq C e^{Av} \int_{\R\setminus [-R,R]} e^{- \delta z^2} \dd z.
\]
Therefore, by Markov inequality, we obtain that
\begin{multline*}
  \limsup_{t \to \infty}  \P(\exists u \in \mathcal{N}^2_t : X_u(t)\geq m_t^{(III)}-A, |T(u) - pt| \geq R t^{1/2}) \\
  \leq \limsup_{t \to \infty} \P(\exists u \in \mathcal{N}^2_t : X_u(t)\geq m_t^{(III)}+K) + C e^{Av} \int_{\R\setminus [-R,R]} e^{- \delta z^2} \dd z.
\end{multline*}
As a result, with the choice previously made for the constant $K$, we obtain that
\[
  \limsup_{R\to\infty} \limsup_{t \to \infty} \P(\exists u \in \mathcal{N}^2_t : X_u(t)\geq m_t^{(III)}-A, |T(u) - pt| \geq R t^{1/2}) \leq \epsilon.
\]
By letting $\epsilon \to 0$, we complete the proof of \eqref{eqn:equation1}.

We now turn to the proof of \eqref{eqn:equation2}. By \eqref{eqn:equation1}, we can assume, up to enlarging the value of $K$ that
\[
  \limsup_{t \to \infty} \P(\exists u \in \mathcal{N}^2_t : X_u(t)\geq m_t^{(III)}-A, |T(u) - pt| \geq K t^{1/2}) \leq \epsilon. 
\]
We now compute the mean of
\[
  Y^{(2)}_t(A,K,R) = \sum_{u \in \mathcal{N}_t^2} \ind{X_u(t) - m_t^{(III)} \in [-A,K]} \ind{|T(u) - pt| \leq K t^{1/2}} \ind{ |X_u(T(u)) - a pt| \geq R t^{1/2}}.
\]
Using again Proposition \ref{prop:manytoone2type}, we have
\begin{align*}
  &\E\left( Y^{(2)}_t(A,K,R) \right)\\
  = &\int_{pt - K t^{1/2}}^{pt + Kt^{1/2}} e^{\beta s + t-s} \P\left( \sigma B_s + (B_t-B_s) - m_t^{(III)} \in [-A,K], |\sigma B_s - a pt| \geq R t^{1/2} \right)\dd s\\
  = &\int_{pt - K t^{1/2}}^{pt + Kt^{1/2}} e^{2 (\beta - 1)(s- pt)} \E\left( e^{\theta (\sigma B_s + B_t-B_s)} \ind{ \begin{array}{l}\scriptstyle \sigma B_s + (B_t-B_s)  + (b-a)(pt-s) \in [-A,K]\\ \scriptstyle |\sigma B_s - a(pt-s)| \geq R t^{1/2}\end{array}}  \right) \dd s,
\end{align*}
by Girsanov transform, using that $\beta s + t-s - \frac{\theta^2}{2}(\sigma^2 s + t-s) = 2 (\beta - 1)(pt-s)$ and straightforward computations. Next, using that $\theta (b-a)(pt-s) = - 2 (\beta - 1)(s-pt)$, for $R$ large enough, we obtain
\begin{multline*}
 \E\left( Y^{(2)}_t(A,K,R) \right)\\
  \leq 2 K t^{1/2}e^{\theta K} \sup_{|r| \leq K t^{1/2}} \P\left(  \sigma B_{pt + r} + B_t-B_{pt+r}  + (b-a)r \in [-A,K], |\sigma B_{pt+r} - a r| \geq R t^{1/2}  \right).
\end{multline*}

Then, by classical Gaussian computations, $\sigma B_{pt+r} - \left(\sigma B_{pt + r} + B_t-B_{pt+r}\right)\frac{\sigma^2 pt + r}{\sigma^2(pt + r) + t(1 - p) - r} $ is independent of $\sigma B_{pt + r} + B_t-B_{pt+r}$. We deduce that for $R$ large enough, we have for all $t$ large enough
\begin{multline*}
  \sup_{|r| \leq K t^{1/2}} \P\left(  \sigma B_{pt + r} + B_t-B_{pt+r}  + (b-a)r \in [-A,K], |\sigma B_{pt+r} - a r| \geq R t^{1/2}  \right)\\
  \leq C(A+K) t^{-1/2} \P\left(|B_1| \geq \frac{R}{2} \sqrt{\tfrac{ (\sigma^2 p + 1-p)}{4 \sigma^2 p(1-p)}}\right).
\end{multline*}
As a result, using again the Markov inequality, we have
\begin{multline*}
  \limsup_{t \to \infty}  \P(\exists u \in \mathcal{N}^2_t : X_u(t)\geq m_t^{(III)}-A, |X_u(T(u)) - apt| \geq R t^{1/2}) \\
  \leq \limsup_{t \to \infty} \P(\exists u \in \mathcal{N}^2_t : X_u(t)\geq m_t^{(III)}-A, |T(u) - pt| \geq K t^{1/2}) \qquad \qquad \qquad \qquad \\
  + \limsup_{t \to \infty} \P(\exists u \in \mathcal{N}^2_t : X_u(t)\geq m_t^{(III)}+K)
  + C(A+K) K e^{\theta K} \P\left(|B_1| \geq R \sqrt{\tfrac{ (\sigma^2 p + 1-p)}{4 \sigma^2 p(1-p)}}\right).
\end{multline*}
Hence, letting $R \to \infty$, with the choice made for the constant $K$, we obtain
\[
  \limsup_{R\to\infty} \limsup_{t \to \infty}  \P(\exists u \in \mathcal{N}^2_t : X_u(t)\geq m_t^{(III)}-A, |X_u(T(u)) - apt| \geq R t^{1/2}) \leq 2 \epsilon,
\]
and letting $\epsilon \to 0$ completes the proof of \eqref{eqn:equation2}.
\end{proof}

The above lemma shows that typical particles of type $2$ that contribute to the extremal process of the multitype BBM have their last ancestor of type $1$ around time $pt$ and position $pat$. We now prove Theorem~\ref{thm:mainIII}, using this localization of birth times and positions of particles in $\mathcal{B}$ that have a descendant contribution to the extremal process at time $t$, with high probability. Then, using Lemmas \ref{lem:cltExpanded} and \ref{lem:poissonProc} we compute the quantity of contributing particles and with Lemma \ref{lem:jointLargeDev} to obtain the value associated to each contribution.

\begin{proof}[Proof of Theorem \ref{thm:mainIII}]
Let $R > 0$, we set
\[
  \hat{\mathcal{E}}^R_t := \sum_{u \in \mathcal{N}^2_t} \ind{|T(u) - pt| \leq Rt^{1/2}, |X_u(T(u)) - apt| \leq R t^{1/2}} \delta_{X_u(t) - m^{(III)}_t}.
\]
Lemma \ref{lem:originIII} states that the extremal process $\hat{\mathcal{E}}^R_t$ is close to the extremal process of the BBM. Precisely, for all $\phi \in \mathcal{T}$ we have
\begin{multline*}
   \left| \E\left(e^{-\crochet{\hat{\mathcal{E}}^R_t,\phi}}\right) - \E\left( e^{-\crochet{\hat{\mathcal{E}}_t,\phi}} \right) \right|\\
    \leq \P\left(\exists u \in \mathcal{N}^2_t : X_u(t)\geq m_t^{(III)}-A , \ (T(u)-pt,X_u(T(u))-apt) \not \in [- Rt^{1/2},Rt^{1/2}]^2\right)
\end{multline*}
where $A$ is such that the support of $\phi$ is contained in $[-A,\infty)$. As a result, by Lemma \ref{lem:originIII} we have
\begin{equation}
  \label{eqn:observation}
  \lim_{R \to \infty} \limsup_{t \to \infty} \left| \E\left(e^{-\crochet{\hat{\mathcal{E}}^R_t,\phi}}\right) - \E\left( e^{-\crochet{\hat{\mathcal{E}}_t,\phi}} \right) \right| = 0,
\end{equation}
so to compute the asymptotic behaviour of $ \E\left( e^{-\crochet{\hat{\mathcal{E}}_t,\phi}} \right)$, it is enough to study the convergence of $\hat{\mathcal{E}}^R_t$ as $t$ then $R$ grow to $\infty$.

Let $R > 0$ and $\phi \in \mathcal{T}$. Using the branching property and Corollary \ref{cor:poissonSum}, we have
\[
  \E\left( e^{ -\crochet{\hat{\mathcal{E}}^R_t,\phi}} \right) = \E\left( \exp\left( - \alpha \int_{pt - Rt^{1/2}}^{pt+Rt^{1/2}} \sum_{u \in \mathcal{N}^1_s} \ind{|X_u(s)-apt| \leq R t^{1/2}}F(t-s,X_u(s)-apt) \dd s \right) \right),
\]
with $F(r,x) = 1 - \E^{(2)}\left( e^{- \sum_{u \in \mathcal{N}^2_r} \phi(X_u(r)- b r +x- b((1-p)t-r))} \right)$. Additionally, by Lemma \ref{lem:jointLargeDev}, we have
\begin{align*}
  F((1 - p)t - r, x) &= C(b)\frac{e^{(1 - \frac{b^2}{2})((1 - p)t-r)}}{\sqrt{2\pi t(1-p)}}  e^{b (x-br) - \tfrac{(x-br)^2}{2((1-p)t-r)}} \int e^{-\theta z} \left(1 - e^{-\Psi^b[\phi](z)}\right) \dd z (1 + o(1))\\
  &= C(b)\frac{e^{(1 - \frac{b^2}{2})(1 - p)t}}{\sqrt{2\pi t(1-p)}}  e^{\theta x -(1 + \frac{\theta^2}{2})r - \tfrac{(x-br)^2}{2(1-p)t}} \int e^{-\theta z} \left(1 - e^{-\Psi^b[\phi](z)}\right) \dd z (1 + o(1)),
\end{align*}
as $t \to \infty$, uniformly in $|r|\leq R t^{1/2}$ and $|x| \leq Rt^{1/2}$, where we used that $\theta = b$. Thus, setting
\[
  \Theta(\phi) := \alpha C(b) \int e^{-\theta z} \left(1 - e^{-\Psi^{b}[\phi](z)}\right) \dd z
\]
and $G_R(r,x) = \ind{|x+ar|\leq R} e^{-\frac{(x + (a-b)r)^2}{2(1-p)}}$ we can now rewrite the Laplace transform of $\hat{\mathcal{E}}^R_t$ as
\begin{align*}
  &\E\left( e^{ -\crochet{\hat{\mathcal{E}}^R_t,\phi}} \right)\\ = &\E\left( \exp\left( - \Theta(\phi)(1+o(1))\tfrac{e^{(1 - \frac{b^2}{2})(1 - p)t}}{\sqrt{2\pi t(1-p)}}  \!\!\int_{pt- Rt^{1/2}}^{pt+Rt^{1/2}}\!\!\!\! \sum_{u \in \mathcal{N}^1_{s}} e^{\theta (X_u(s) - apt) -(1 + \frac{\theta^2}{2})(s-pt)} G_R\left(\tfrac{s-pt}{t^{1/2}},\tfrac{X_u(s)- as}{t^{1/2}}\right) \dd s \right) \right).
\end{align*}
By definition of the parameters, we have $\beta + \frac{\theta^2\sigma^2}{2} = 1 + \frac{\theta^2}{2} = \frac{\beta- \sigma^2}{1-\sigma^2}$ and $\theta a = \beta + \frac{\theta^2 \sigma^2}{2} - \left( \beta - \frac{a^2}{2\sigma^2} \right)$, therefore we can rewrite
\begin{multline}
  \label{eqn:steppp}
  \E\left( e^{ -\crochet{\hat{\mathcal{E}}^R_t,\phi}} \right)\\ = \E\left( \exp\left( - \Theta(\phi)\frac{1+o(1)}{\sqrt{2\pi t(1-p)}}  \int_{pt- Rt^{1/2}}^{pt+Rt^{1/2}} \sum_{u \in \mathcal{N}^1_{s}} e^{\theta X_u(s) - (\beta + \frac{\theta^2\sigma^2}{2})s} G_R\left(\tfrac{s-pt}{t^{1/2}},\tfrac{X_u(s)- as}{t^{1/2}}\right) \dd s \right) \right),
\end{multline}
where we used that $(1 -p) (1 - \frac{b^2}{2}) +p(1 - \frac{a^2}{2\sigma^2}) = 0$.

We now observe that by Lemma \ref{lem:cltExpanded}, using \eqref{eqn:scaling}, we have
\begin{multline*}
  \lim_{t \to \infty} \frac{1}{t^{1/2}}  \int_{pt- Rt^{1/2}}^{pt+Rt^{1/2}} \sum_{u \in \mathcal{N}^1_{s}} e^{\theta X_u(s) - (\beta + \frac{\theta^2\sigma^2}{2})s} G_R\left(\tfrac{s-pt}{t^{1/2}},\tfrac{X_u(s)- apt}{t^{1/2}}\right) \dd s\\
   = \frac{W_\infty(\theta)}{\sqrt{2\pi p \sigma^2}} \int_{[-R,R] \times \R} e^{-\frac{z^2}{2\sigma^2 p}} e^{- \frac{(z+(a-b)r)^2}{2(1 -p)}} \ind{|z + ar| \leq R} \dd r \dd z
\end{multline*}
where $W_\infty(\theta) = \lim_{t \to \infty} \sum_{u \in \mathcal{N}^1_t} e^{\theta X_u(t) - t(\beta + \frac{\sigma^2 \theta^2}{2})}$ is the limit of the additive martingale with parameter $\theta$ for the branching Brownian motion of type $1$. As a result, writing
\[
  c_R = \frac{1}{2\pi\sqrt{p(1-p) \sigma^2}}\int_{[-R,R] \times \R} e^{-\frac{z^2}{2\sigma^2 p}} e^{- \frac{(z+(a-b)r)^2}{2(1 -p)}} \ind{|z + ar| \leq R} \dd r \dd z \in (0,\infty),
\]
by dominated convergence theorem, \eqref{eqn:steppp} yields
\[
  \lim_{t \to \infty}\E\left( e^{ -\crochet{\hat{\mathcal{E}}^R_t,\phi}} \right) = \E\left( \exp\left( - c_R W_\infty(\theta) \Theta(\phi) \right) \right).
\]
This convergence holds for all $\phi \in \mathcal{T}$. Then by \cite[Lemma 4.4]{BBCM19}, the process $\hat{\mathcal{E}}_t^R$ converges vaguely in distribution as $t \to \infty$ to a DPPP($\theta {c}_R W_\infty(\theta) e^{-\theta z}\dd z,\mathfrak{D}^b$), as $t \to \infty$, where $\mathfrak{D}^b$ is the law of $\mathcal{D}^b$, the point measure defined in \eqref{eqn:defineSupercriticalDecoration}.

To complete the proof, we now observe that by monotone convergence theorem, we have
\[
  \lim_{R \to \infty} c_R = \frac{1}{2\pi\sqrt{p(1-p) \sigma^2}}\int_{\R \times \R} e^{-\frac{z^2}{2\sigma^2 p}} e^{- \frac{(z+(a-b)r)^2}{2(1 -p)}} \dd r = \frac{1}{b-a} = \frac{1}{\theta (1 - \sigma^2)}.
\]
Therefore, letting $t \to \infty$ then $R \to \infty$, \eqref{eqn:observation} yields
\[
  \lim_{t \to \infty} \E\left(e^{-\crochet{\hat{\mathcal{E}}_t,\phi}}\right) =  \E\left( \exp\left( - \frac{\alpha C(b) W_\infty(\theta)}{2 (\beta - 1)}\int \theta e^{-\theta z} \left(1 - e^{-\Psi^{b}[\phi](z)}\right) \right) \right).
\]
As a result, using \cite[Lemma 4.4]{BBCM19}, the proof of Theorem \ref{thm:mainIII} is now complete, with $c^{(III)} = \frac{\alpha C(b)}{2(\beta-1)}$ and $\mathfrak{D}^{(III)}$ the law of $\mathcal{D}^b$ defined in \eqref{eqn:defineSupercriticalDecoration}.
\end{proof}

\paragraph{Acknowledgements.}
The authors are partially funded by ANR-16-CE93-0003 (ANR MALIN). Additionally, M.A.B. is partially supported by Cofund MathInParis project from FSMP. 

\noindent\begin{minipage}{0.1\textwidth}
\includegraphics[width=\textwidth]{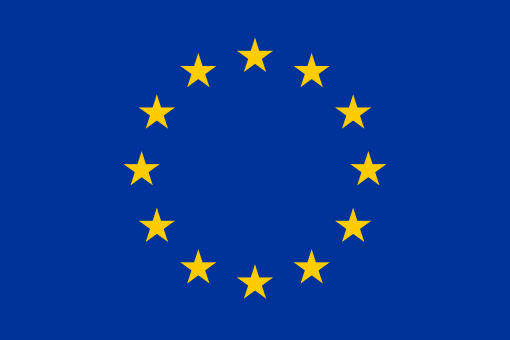}	
\end{minipage}\quad
\begin{minipage}{0.85\textwidth}
This program has received funding from the European Union's Horizon 2020 research and innovation programme under the Marie Skłodowska-Curie grant agreement No 754362.
\end{minipage}


\begin{thebibliography}{{Mal}15b}

\bibitem[ABBS13]{ABBS}
E.~{A\"{\i}d\'ekon}, J.~{Berestycki}, \'E. {Brunet}, and Z.~{Shi}.
\newblock
  \href{https://link.springer.com/article/10.1007/s00440-012-0461-0}{Branching
  Brownian motion seen from its tip}.
\newblock {\em {Probab. Theory Relat. Fields}}, 157(1-2):405--451, 2013.

\bibitem[ABK11]{ABK1}
L.-P. {Arguin}, A.~{Bovier}, and N.~{Kistler}.
\newblock
  \href{https://www.onlinelibrary.wiley.com/doi/abs/10.1002/cpa.20387}{Genealogy
  of extremal particles of branching Brownian motion}.
\newblock {\em {Commun. Pure Appl. Math.}}, 64(12):1647--1676, 2011.

\bibitem[ABK12]{ABK2}
L.-P. {Arguin}, A.~{Bovier}, and N.~{Kistler}.
\newblock
  \href{https://projecteuclid.org/download/pdfview_1/euclid.aoap/1344614208}{Poissonian
  statistics in the extremal process of branching Brownian motion}.
\newblock {\em {Ann. Appl. Probab.}}, 22(4):1693--1711, 2012.

\bibitem[ABK13]{ABK3}
L.-P. {Arguin}, A.~{Bovier}, and N.~{Kistler}.
\newblock
  \href{https://link.springer.com/article/10.1007%2Fs00440-012-0464-x}{The
  extremal process of branching Brownian motion}.
\newblock {\em {Probab. Theory Relat. Fields}}, 157(3-4):535--574, 2013.

\bibitem[BBCM20]{BBCM19}
J.~Berestycki, \'E. Brunet, A.~Cortines, and B.~Mallein.
\newblock \href{https://arxiv.org/abs/1810.05809}{A simple backward
  construction of Branching Brownian motion with large displacement and
  applications}.
\newblock Submitted, oct 2020+.

\bibitem[BBS13]{BBS13}
J.~Berestycki, N.~Berestycki, and J.~Schweinsberg.
\newblock \href{https://doi.org/10.1214/11-AOP728}{The genealogy of branching
  Brownian motion with absorption}.
\newblock {\em Ann. Probab.}, 41(2):527--61, 03 2013.

\bibitem[BC14]{BoC14}
E.~{Bouin} and V.~{Calvez}.
\newblock \href{https://doi.org/10.1088/0951-7715/27/9/2233}{Travelling waves
  for the cane toads equation with bounded traits}.
\newblock {\em {Nonlinearity}}, 27(9):2233--2253, 2014.

\bibitem[BH14]{BoH14}
A.~{Bovier} and L.~{Hartung}.
\newblock
  \href{https://projecteuclid.org/download/pdf_1/euclid.ejp/1465065660}{The
  extremal process of two-speed branching Brownian motion}.
\newblock {\em {Electron. J. Probab.}}, 19:28, 2014.
\newblock Id/No 18.

\bibitem[BH15]{BoH15}
A.~{Bovier} and L.~{Hartung}.
\newblock \href{http://alea.math.cnrs.fr/articles/v12/12-11.pdf}{Variable speed
  branching Brownian motion. I: Extremal processes in the weak correlation
  regime}.
\newblock {\em {ALEA, Lat. Am. J. Probab. Math. Stat.}}, 12(1):261--291, 2015.

\bibitem[BH20]{BoH20}
A.~Bovier and L.~Hartung.
\newblock \href{https://onlinelibrary.wiley.com/doi/abs/10.1002/cpa.21893}{From
  1 to 6: A Finer Analysis of Perturbed Branching Brownian Motion}.
\newblock {\em Communications on Pure and Applied Mathematics},
  73(7):1490--1525, 2020.

\bibitem[{Big}77]{Big}
J.~D. {Biggins}.
\newblock \href{https://doi.org/10.2307/3213258}{Martingale convergence in the
  branching random walk}.
\newblock {\em {J. Appl. Probab.}}, 14:25--37, 1977.

\bibitem[{Big}92]{Big92}
J.~D. {Biggins}.
\newblock \href{https://projecteuclid.org/euclid.aop/1176989921}{Uniform
  convergence of martingales in the branching random walk}.
\newblock {\em {Ann. Probab.}}, 20(1):137--151, 1992.

\bibitem[{Big}10]{BigBO}
J.~D. {Biggins}.
\newblock \href{https://arxiv.org/abs/1003.4715}{Branching out}.
\newblock In {\em {Probability and mathematical genetics. Papers in honour of
  Sir John Kingman}}, pages 113--134. Cambridge: Cambridge University Press,
  2010.

\bibitem[Big12]{Big12}
J.~D. Biggins.
\newblock \href{https://doi.org/10.1214/11-AAP813}{Spreading speeds in
  reducible multitype branching random walk}.
\newblock {\em Ann. Appl. Probab.}, 22(5):1778--1821, 2012.

\bibitem[BIM20]{BIM20+}
D.~Buraczewski, A.~Iksanov, and B.~Mallein.
\newblock \href{https://arxiv.org/abs/2002.05215}{On the derivative martingale
  in a branching random walk}.
\newblock arXiv:2002.05215, feb 2020.

\bibitem[BM19]{BuM19}
D.~Buraczewski and M.~Ma\'slanka.
\newblock \href{https://doi.org/10.1051/ps/2019006}{Large deviation estimates
  for branching random walks}.
\newblock {\em ESAIM: PS}, 23:823--840, 2019.

\bibitem[{Bra}78]{Bra78}
M.~D. {Bramson}.
\newblock
  \href{https://onlinelibrary.wiley.com/doi/abs/10.1002/cpa.3160310502}{Maximal
  displacement of branching Brownian motion}.
\newblock {\em {Commun. Pure Appl. Math.}}, 31:531--581, 1978.

\bibitem[Bur09]{Bur09}
D.~Buraczewski.
\newblock \href{https://doi.org/10.1016/j.spa.2009.09.005}{On tails of fixed
  points of the smoothing transform in the boundary case}.
\newblock {\em Stochastic Process. Appl.}, 119(11):3955--3961, 2009.

\bibitem[CHL19]{CHL19}
Aser Cortines, Lisa Hartung, and Oren Louidor.
\newblock \href{https://projecteuclid.org/euclid.aop/1562205709}{The structure
  of extreme level sets in branching Brownian motion}.
\newblock {\em {Ann. Probab.}}, 47(4):2257--2302, 2019.

\bibitem[CR88]{ChR88}
B.~Chauvin and A.~Rouault.
\newblock \href{https://doi.org/10.1007/BF00356108}{KPP equation and
  supercritical branching brownian motion in the subcritical speed area.
  Application to spatial trees}.
\newblock {\em Probability Theory and Related Fields}, 80(2):299--314, Dec
  1988.

\bibitem[DMS16]{DMS16}
B.~Derrida, B.~Meerson, and P.~V. Sasorov.
\newblock
  \href{https://link.aps.org/doi/10.1103/PhysRevE.93.042139}{Large-displacement
  statistics of the rightmost particle of the one-dimensional branching
  Brownian motion}.
\newblock {\em Phys. Rev. E}, 93:042139, Apr 2016.

\bibitem[FZ12a]{FaZ}
M.~Fang and O.~Zeitouni.
\newblock \href{https://doi.org/10.1007/s10955-012-0581-z}{Slowdown for time
  inhomogeneous branching {B}rownian motion}.
\newblock {\em J. Stat. Phys.}, 149(1):1--9, 2012.

\bibitem[FZ12b]{FaZ2}
M.~{Fang} and O.~{Zeitouni}.
\newblock \href{https://projecteuclid.org/euclid.ejp/1465062389}{Branching
  random walks in time inhomogeneous environments}.
\newblock {\em {Electron. J. Probab.}}, 17:18, 2012.
\newblock Id/No 67.

\bibitem[GH18]{GaH18}
N.~Gantert and T.~Höfelsauer.
\newblock \href{https://doi.org/10.1214/18-ECP135}{Large deviations for the
  maximum of a branching random walk}.
\newblock {\em Electron. Commun. Probab.}, 23:12 pp., 2018.

\bibitem[GKS18]{GKS18}
C.~Glenz, N.~Kistler, and M.~A. Schmidt.
\newblock \href{https://doi.org/10.1214/18-ECP187}{High points of branching
  Brownian motion and McKean’s Martingale in the Bovier-Hartung extremal
  process}.
\newblock {\em Electron. Commun. Probab.}, 23:12 pp., 2018.

\bibitem[{Hol}14]{Hol13}
M.~{Holzer}.
\newblock
  \href{https://www.sciencedirect.com/science/article/abs/pii/S0167278913003308}{Anomalous
  spreading in a system of coupled Fisher-KPP equations}.
\newblock {\em {Physica D}}, 270:1--10, 2014.

\bibitem[{Hol}16]{Hol16}
M.~{Holzer}.
\newblock
  \href{https://www.sciencedirect.com/science/article/abs/pii/S0167278913003308}{A
  proof of anomalous invasion speeds in a system of coupled Fisher-KPP
  equations}.
\newblock {\em {Discrete Contin. Dyn. Syst.}}, 36(4):2069--2084, 2016.

\bibitem[{Hu}16]{Hu16}
Y.~{Hu}.
\newblock \href{https://doi.org/10.1214/14-AIHP651}{How big is the minimum of a
  branching random walk?}
\newblock {\em {Ann. Inst. Henri Poincar\'e, Probab. Stat.}}, 52(1):233--260,
  2016.

\bibitem[{Kal}02]{Kal}
Olav {Kallenberg}.
\newblock {\em
  \href{https://www.springer.com/gp/book/9780387953137}{Foundations of modern
  probability. 2nd ed}}.
\newblock New York, NY: Springer, 2nd ed. edition, 2002.

\bibitem[KP76]{KaP}
J.-P. {Kahane} and J.~{Peyrière}.
\newblock
  \href{https://www.sciencedirect.com/science/article/pii/0001870876901511}{Sur
  certaines martingales de Benoit Mandelbrot}.
\newblock {\em {Adv. Math.}}, 22:131--145, 1976.

\bibitem[LPP95]{LPP95}
R.~Lyons, R.~Pemantle, and Y.~Peres.
\newblock
  \href{http://links.jstor.org/sici?sici=0091-1798(199507)23:3<1125:CPOLCF>2.0.CO;2-Y&origin=MSN}{Conceptual
  proofs of {$L\log L$} criteria for mean behavior of branching processes}.
\newblock {\em Ann. Probab.}, 23(3):1125--1138, 1995.

\bibitem[LS87]{LaS87}
S.~P. {Lalley} and T.~{Sellke}.
\newblock \href{https://projecteuclid.org/euclid.aop/1176992080}{A conditional
  limit theorem for the frontier of a branching Brownian motion}.
\newblock {\em {Ann. Probab.}}, 15:1052--1061, 1987.

\bibitem[Lyo97]{Lyo97}
R.~Lyons.
\newblock \href{https://doi.org/10.1007/978-1-4612-1862-3_17}{A simple path to
  {B}iggins' martingale convergence for branching random walk}.
\newblock In {\em Classical and modern branching processes ({M}inneapolis,
  {MN}, 1994)}, volume~84 of {\em IMA Vol. Math. Appl.}, pages 217--221.
  Springer, New York, 1997.

\bibitem[Mad16]{Mad}
T.~Madaule.
\newblock \href{https://arxiv.org/abs/1606.03211}{The tail distribution of the
  derivative martingale and the global minimum of the branching random walk}.
\newblock arXiv:1606.03211, 2016.

\bibitem[{Mad}17]{Mad17}
T.~{Madaule}.
\newblock
  \href{https://link.springer.com/article/10.1007/s10959-015-0636-6}{Convergence
  in law for the branching random walk seen from its tip}.
\newblock {\em {J. Theor. Probab.}}, 30(1):27--63, 2017.

\bibitem[{Mai}13]{Mai13}
P.~{Maillard}.
\newblock \href{https://doi.org/10.1214/ECP.v18-2390}{A note on stable point
  processes occurring in branching Brownian motion}.
\newblock {\em {Electron. Commun. Probab.}}, 18:9, 2013.
\newblock Id/No 5.

\bibitem[{Mal}15a]{Mal15a}
B.~{Mallein}.
\newblock \href{https://projecteuclid.org/euclid.ejp/1465067174}{Maximal
  displacement in a branching random walk through interfaces}.
\newblock {\em {Electron. J. Probab.}}, 20:40, 2015.
\newblock Id/No 68.

\bibitem[{Mal}15b]{Mal15b}
B.~{Mallein}.
\newblock
  \href{https://www.sciencedirect.com/science/article/abs/pii/S0304414915001337}{Maximal
  displacement of a branching random walk in time-inhomogeneous environment}.
\newblock {\em {Stochastic Processes Appl.}}, 125(10):3958--4019, 2015.

\bibitem[Mal15c]{Mal15c}
Bastien Mallein.
\newblock \href{https://doi.org/doi:10.1214/ECP.v20-4216}{Maximal displacement
  of $d$-dimensional branching {B}rownian motion}.
\newblock {\em Electron. Commun. Probab.}, 20:no 76, 1--12, 2015.

\bibitem[{McK}75]{McK}
H.~P. {McKean}.
\newblock
  \href{https://onlinelibrary.wiley.com/doi/abs/10.1002/cpa.3160280302}{Application
  of Brownian motion to the equation of Kolmogorov-Petrovskii- Piskunov}.
\newblock {\em {Commun. Pure Appl. Math.}}, 28:323--331, 1975.

\bibitem[MM21]{MM}
Pascal Maillard and Bastien Mallein.
\newblock Characterization of branching-invariant point measures.
\newblock In preparation, 2021.

\bibitem[MP19]{MaP19}
P.~{Maillard} and M.~{Pain}.
\newblock \href{https://projecteuclid.org/euclid.aop/1571731442}{1-stable
  fluctuations in branching Brownian motion at critical temperature. I: The
  derivative martingale}.
\newblock {\em {Ann. Probab.}}, 47(5):2953--3002, 2019.

\bibitem[MZ16]{MaZ}
P.~{Maillard} and O.~{Zeitouni}.
\newblock \href{https://www.projecteuclid.org/euclid.aihp/1469723514}{Slowdown
  in branching Brownian motion with inhomogeneous variance}.
\newblock {\em {Ann. Inst. Henri Poincar\'e, Probab. Stat.}}, 52(3):1144--1160,
  2016.

\bibitem[{Nev}88]{Nev}
J.~{Neveu}.
\newblock
  \href{https://link.springer.com/chapter/10.1007/978-1-4684-0550-7_10}{Multiplicative
  martingales for spatial branching processes}.
\newblock {Stochastic processes, Semin., Princeton/New Jersey 1987, Prog.
  Probab. Stat. 15, 223-242 (1988).}, 1988.

\bibitem[Pai18]{Pai}
M.~Pain.
\newblock \href{https://doi.org/10.1214/17-AIHP850}{The near-critical Gibbs
  measure of the branching random walk}.
\newblock {\em Ann. Inst. H. Poincaré Probab. Statist.}, 54(3):1622--1666,
  2018.

\bibitem[{Pey}74]{Pey}
J.~{Peyrière}.
\newblock
  \href{https://link.springer.com/chapter/10.1007/BFb0091455}{Turbulence et
  dimension de Hausdorff}.
\newblock {\em {C. R. Acad. Sci., Paris, S\'er. A}}, 278:567--569, 1974.

\bibitem[RY14]{ReY14}
Y.-X. {Ren} and T.~{Yang}.
\newblock \href{https://projecteuclid.org/euclid.aap/1396360111}{Multitype
  branching Brownian motion and traveling waves}.
\newblock {\em {Adv. Appl. Probab.}}, 46(1):217--240, 2014.

\bibitem[{Shi}15]{ShiBook}
Zhan {Shi}.
\newblock {\em {Branching random walks. \'Ecole d'\'Et\'e de Probabilit\'es de
  Saint-Flour XLII -- 2012}}, volume 2151.
\newblock Cham: Springer, 2015.

\bibitem[SV16]{SmV16}
C.~{Smadi} and V.~A. {Vatutin}.
\newblock {Reduced two-type decomposable critical branching processes with
  possibly infinite variance}.
\newblock {\em {Markov Process. Relat. Fields}}, 22(2):311--358, 2016.

\bibitem[SZ15]{SubZei}
E.~Subag and O.~Zeitouni.
\newblock \href{https://doi.org/10.1007/s00220-015-2303-2}{Freezing and
  decorated {P}oisson point processes}.
\newblock {\em Comm. Math. Phys.}, 337(1):55--92, 2015.

\end{thebibliography}

\end{document}